\documentclass[12pt,leqno]{amsart}
\usepackage{amsmath, amssymb, amsthm, bm}
\usepackage{rsfso}
\usepackage[svgnames]{xcolor}
\usepackage{tikz}
\usetikzlibrary{arrows, cd, intersections, decorations.pathmorphing}
\tikzset{>=latex}
\usepackage{mathtools}
\usepackage[alwaysadjust]{paralist}
\usepackage[alphabetic]{amsrefs}
\usepackage[T1]{fontenc}
\usepackage{mathptmx}
\usepackage{microtype}
\usepackage[format=hang, font=normalsize, justification=raggedright,
  labelfont=bf]{caption}
\usepackage[colorlinks = true,
            linkcolor  = DarkBlue,
            urlcolor   = DarkGreen,
            citecolor  = DarkBlue]{hyperref}
\hypersetup{
  pdftitle={Smooth Quot Schemes},
  pdfauthor={Roy Skjelnes, Gregory G. Smith, and Mike Stillman}
  }
\usepackage[centering, includeheadfoot, hmargin=1.0in, tmargin=1.0in, 
  bmargin=1in, headheight=6pt]{geometry}
\newtheorem{lemma}{Lemma}[section]
\newtheorem{theorem}[lemma]{Theorem}
\newtheorem{corollary}[lemma]{Corollary}
\newtheorem{proposition}[lemma]{Proposition}
\theoremstyle{definition}

\newtheorem{remark}[lemma]{Remark}
\newtheorem{examplex}[lemma]{Example}
\newenvironment{example}
  {\pushQED{\qed}\examplex}
  {\popQED\endexamplex}

\renewcommand{\AA}{\ensuremath{\mathbb{A}}} 
\newcommand{\PP}{\ensuremath{\mathbb{P}}} 
\newcommand{\QQ}{\ensuremath{\mathbb{Q}}}
\newcommand{\ZZ}{\ensuremath{\mathbb{Z}}}
\newcommand{\kk}{\Bbbk}
\DeclareMathOperator{\coker}{Coker}
\DeclareMathOperator{\image}{Im}
\DeclareMathOperator{\Ext}{Ext}
\DeclareMathOperator{\Gr}{Gr}
\DeclareMathOperator{\GL}{GL}
\DeclareMathOperator{\Hilb}{Hilb}
\DeclareMathOperator{\Hom}{Hom}
\DeclareMathOperator{\Quot}{Quot}
\DeclareMathOperator{\Spec}{Spec}

\DeclareMathOperator{\sHom}{\mathcal{H} \kern-1.0pt \textit{om}}

\begin{document}

\vspace*{-2.5em}

\title[Smooth Quot Schemes]{Classifying Smooth Quot Schemes}
\author[R.~Skjelnes]{Roy Skjelnes}
\address{Roy Skjelnes: 
    Department of Mathematics, 
    Royal Institute of Technology (KTH), 
    Stockholm, 100~44, Sweden;
    {\normalfont \texttt{skjelnes@math.kth.se}}}

\author[G.G.~Smith]{Gregory G.{} Smith}
\address{Gregory G.{} Smith: 
    Department of Mathematics and Statistics, 
    Queen's University, 
    Kingston, Ontario, K7L 3N6, Canada; 
    {\normalfont \texttt{ggsmith@mast.queensu.ca}}}

\author[M.~Stillman]{Michael Stillman}
\address{Michael Stillman: 
    Department of Mathematics, 
    Cornell University, 
    Ithaca, New York, 14853, United States of America;
    {\normalfont \texttt{mes15@cornell.edu}}}

\subjclass[2020]{
    14D20;  
    14C05,  
    13C70}  

\begin{abstract}
  The Quot scheme
  $\Quot^{\kern0.5pt q} \kern-0.5pt (\mathcal{O}_{\PP^n}^{\! r})$ parametrizes
  the quotients of the trivial vector bundle of rank $r$ on $n$-dimensional
  projective space that have Hilbert polynomial $q$ and are flat over a base
  scheme.  We identify numerical conditions on the polynomial $q$ that
  completely determine when this Quot scheme is smooth and irreducible.  Our
  approach also uncovers further geometric features of the projective scheme
  $\Quot^{\kern0.5pt q} \kern-0.5pt (\mathcal{O}_{\PP^n}^{\! r})$ including
  the smoothness of the lexicographic point.
\end{abstract}

\maketitle

\vspace*{-1.75em}
\section{Overview}
\label{s:over}

\noindent
Quot schemes, introduced in \cite{Gro61}*{\S3}, are fundamental parameter
spaces in algebraic geometry.  Despite their central role in the construction
of moduli spaces~\cite{HL10}*{\S2.2} and in deformation
theory~\cite{Ser06}*{\S4.4}, most of their geometric properties are still
poorly understood. To circumvent the challenges posed by their singularities,
\cite{CK01}*{\S4} develops the concept of derived Quot schemes.  However, much
of the existing literature on Quot schemes focuses exclusively on the special
case of Hilbert schemes.  For instance, the geometry of
$\Quot^{\kern0.5pt q} \kern-0.5pt (\mathcal{O}_{\PP^n}^{\! r})$, which
parametrizes quotients of the trivial vector bundle
$\smash{\mathcal{O}_{\PP^n}^{\! r}} \coloneqq \smash{\bigoplus_{\! j=1}^{r}
  \mathcal{O}_{\PP^n}^{}}$ that are flat over a base scheme and have Hilbert
polynomial $q$, is much more mysterious when the rank $r$ is greater than
$1$. While \cite{Par96}*{Theorem~36} establishes the path-connectedness of
$\Quot^{\kern0.5pt q} \kern-0.5pt (\mathcal{O}_{\PP^n}^{\! r})$ by extending
the arguments for the Hilbert scheme
$\Hilb^{\kern0.5pt q} \kern-0.5pt (\PP^n) \coloneqq \Quot^{\kern0.5pt q}
\kern-0.5pt (\mathcal{O}_{\PP^{n}}^{\! 1})$, many aspects of Quot schemes and
Hilbert schemes diverge in subtle and surprising ways. This article clarifies
some of these inherent differences.

Our main theorem classifies the smooth
$\Quot^{\kern0.5pt q} \kern-0.5pt (\mathcal{O}_{\PP^n}^{\! r})$. It is
formulated in terms of a family of univariate rational polynomials and has
three distinct cases depending on the rank $r$.  An integer partition
$\lambda$ is an $e$-tuple
$\lambda \coloneqq (\lambda_1, \lambda_2, \dotsc, \lambda_e)$ of integers
satisfying
$\lambda_1 \geqslant \lambda_2 \geqslant \dotsb \geqslant \lambda_{e}
\geqslant 1$ and the associated polynomial is
\[
    p_{\lambda\!}(t) \coloneqq 
    \sum_{i=1}^{e} \binom{t+\lambda_i - i}{\lambda_i -1}
    = \binom{t+\lambda_1 - 1}{\lambda_1 -1} 
    + \binom{t+\lambda_2 - 2}{\lambda_2 -1} + \dotsb 
    + \binom{t+\lambda_e - e}{\lambda_e -1} 
    \in \QQ[t] \, . 
\]

\begin{theorem} 
    \label{t:main}
    Let $n$ and $r$ be positive integers.  For any polynomial $q \in \QQ[t]$,
    the Quot scheme
    $\Quot^{\kern0.5pt q} \kern-0.5pt (\mathcal{O}_{\PP^{n}}^{\! r})$ is
    smooth and irreducible if and only if
    $q(t) = s \, \smash{\binom{t+n}{n}} + p_{\lambda\!}(t)$ for some
    nonnegative integer $s$ and some integer partition
    $\lambda \coloneqq (\lambda_1, \lambda_2, \dotsc, \lambda_e)$ such that
    $r \geqslant s$, $n \geqslant \lambda_1$, and either
    \begin{compactitem}[\phantom{W}$\bullet$]
    \item $r \geqslant s+2$ and one of the following disjoint conditions holds:
    \begin{compactenum}[\phantom{W} \upshape 1.]
        \item  $\lambda_{e} \geqslant 2$,
        \item
          $\lambda = (n^{e-1}, 1) = (\smash{\overbrace{n,n, \dotsc,
              n}^{\text{\upshape $e \! - \! 1$ times}}},1)$ where $n \neq 2$
          and $e \geqslant 2$,
        \item  $\lambda = (2^{e-1}, 1)$ where $n = 2$ and $e \geqslant 4$,
        \item $\lambda = (1)$ or $\lambda = ()$;
    \end{compactenum}
  \item $r = s+1$ and the Hilbert scheme
    $\Hilb^{\kern0.5pt p_{\lambda}} \kern-0.5pt (\PP^n)$ is smooth; or
    \item $r = s$ and $\lambda = ()$.
    \end{compactitem}
\end{theorem}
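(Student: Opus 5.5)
The plan is to combine three ingredients: a Macaulay-type description of which $q$ make the Quot scheme nonempty, a lexicographic (Borel-fixed) point at which to compute tangent spaces, and the known classification of smooth Hilbert schemes (Staal's theorem).

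\textbf{Step 1: nonemptiness and the normal form of $q$.} By Gotzmann/Macaulay, a saturated graded submodule $N \subseteq S^r$, with $S = \kk[x_0,\dotsc,x_n]$, can be replaced by a lexicographic submodule with the same Hilbert function. Such a submodule is, up to the ordering of summands, of the form $0^{s} \oplus L \oplus S^{r-s-1}$ with $L$ a lex ideal. Hence the Hilbert polynomial of the quotient equals $s\binom{t+n}{n} + p_\lambda(t)$, where $p_\lambda$ is the Macaulay representation of the Hilbert polynomial of a lex ideal. The conditions $\lambda_1 \leqslant n$ and $s \leqslant r$ are exactly the conditions for nonemptiness. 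This yields the decomposition of $q$, and shows that $r = s$ forces $\lambda = ()$.

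\textbf{Step 2: sufficiency.} We reduce everything to a single point by a degeneration argument. Every point degenerates, via a $\GL_r \times \GL_{n+1}$ one-parameter subgroup (Borel-fixed, then lex-type via Hartshorne-style connectedness/Reeves--Stillman methods), to the lex point. Since the singular locus is closed and invariant, smoothness at the lex point gives smoothness everywhere. Irreducibility then follows because a point lying on two components is singular. So it suffices to compute the tangent space $\Hom_S(N,S^r/N)_0$ at the lex module and compare it with a dimension count of an explicit irreducible family of dimension $\dim \Gr + \dim(\text{family of } \lambda\text{-subschemes})$.

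\begin{compactitem}
\item When $r = s$, the Quot scheme is the Grassmannian $\Gr(s,r)$, a point, which is smooth.
\item When $r = s+1$, the quotient is an extension of a rank-$s$ free sheaf by the ideal-type piece. The Quot scheme should fiber over a Grassmannian with fiber $\Hilb^{p_\lambda}$. The tangent computation then splits, and smoothness holds iff the Hilbert scheme is smooth.
\item When $r \geqslant s+2$, the extra free summands contribute terms $\Hom(L, S)$ to the tangent space beyond those of the Hilbert scheme. Smoothness then requires $\dim \Hom_S(L,S/L)_0$ and $\Hom(L,S)_0$ to match the dimension of the expected component.
\end{compactitem}

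\textbf{Step 3: necessity, and the main obstacle.} I expect the crux to be the case $r \geqslant s+2$. There one must compute, for the lex ideal $L$ of $p_\lambda$, the dimension of the deformations in which the cokernel acquires torsion or a non-ideal structure. Cases 1--4 are precisely those where the lex point has tangent dimension equal to the dimension of the main component. For all other $\lambda$, one exhibits either a second component or an excess tangent vector.

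For the excess-tangent case, my first attempt would be a monomial-by-monomial count of $\Hom_S(N,S^r/N)_0$, organized by the Macaulay decomposition, since lex modules have explicit generators. I would compare this against the dimension of the locus of quotients $\mathcal{O}^r \to \mathcal{O}^s \oplus \mathcal{O}_Z \oplus(\text{torsion-free part})$.

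The exceptional $n = 2$ case with $e \geqslant 4$ and the case $\lambda = (n^{e-1},1)$ come from a union of a hypersurface-like scheme of degree $e-1$ and a point. These need a direct computation, and I expect isolating exactly these exceptions (e.g.\ $n=2$, $e=2,3$ failing) to be the hardest part.
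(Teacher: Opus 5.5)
Your Step~2 rests on a false claim: points of $\Quot^{q}(\mathcal{O}_{\PP^n}^{r})$ do not all degenerate to the lexicographic point. Generic initial submodules send a point only to \emph{some} Borel-fixed point. Hartshorne-type connectedness links Borel-fixed points by chains of curves; it does not degenerate everything into the lex point. Irreducible components are closed and invariant under the connected group $\GL(n+1,\kk)\times\GL(r,\kk)$, so points on a component missing the lex point never specialize to it. In Example~\ref{e:21}, for instance, the lex point lies only on $X_0$.

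Worse, the lex point is \emph{always} smooth (Theorem~\ref{t:smooth}). Your reduction would therefore make every Quot scheme smooth, contradicting the statement you are proving, e.g.\ $\lambda=(1,1)$, $n\geqslant 2$, $r\geqslant s+2$ (Example~\ref{e:11}). The same error misdirects Step~3: cases 1--4 are not distinguished by the tangent space at the lex point, and no count of $\Hom_S(N,S^r/N)$ at the lex module can ever detect a singularity.

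You also never produce the full-dimensional family through the lex point. Your count ``$\dim\Gr$ plus the dimension of the family of subschemes'' does not match Proposition~\ref{p:dim}. The paper builds this family, when $r=s+1$, from a residual-flag family of ideals $f_0\langle g_1,\dots,g_m\rangle$ together with $r-1$ further forms of degree $a_n$ (Lemma~\ref{l:s+1}). It then adjoins free summands (Lemma~\ref{l:sec}).

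The missing idea is to enumerate \emph{all} Borel-fixed points and compare tangent dimensions at each. Combine four facts: the lex point is smooth, tangent dimension is upper semicontinuous, every point specializes to a Borel-fixed point, and the scheme is connected. Then the scheme is smooth exactly when every Borel-fixed point has the lex tangent dimension.

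For $r\geqslant s+2$, Borel-fixed submodules record ways of splitting $p_\lambda$ across summands. Lemma~\ref{l:sum} rules out any splitting when $\lambda_e\geqslant 2$ or $\lambda\in\{(1),()\}$, so the lex point is the only Borel-fixed point. When $\lambda_e=1$ and $e\geqslant 2$, the splitting $p_\lambda=p_\mu+p_{(1)}$ produces a second point $S^{r-s-2}\oplus L(1)\oplus L(\mu)\oplus 0^{s}$. Lemma~\ref{l:key} reduces the comparison to the equality $h^0\bigl(\Hom_S(L(\mu),S/L(1))\bigr)+h^0\bigl(\Hom_S(L(1),S/L(\mu))\bigr)=h^0\bigl(S/L(\lambda)\bigr)$. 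The first summand equals $n-\lambda_{e-1}+1$, so for $n\geqslant 2$ equality fails unless $\mu=(n^{e-1})$ (Proposition~\ref{p:sing}). Lemma~\ref{l:presM} then shows the second summand jumps exactly when $n=2$ and $e\in\{2,3\}$. This is the source of the exceptions, not any defect at the lex point.

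For $r=s+1$, your proposed fibration over a Grassmannian with fibre $\Hilb^{p_\lambda}(\PP^n)$ is unjustified. It already fails dimensionally for $\lambda=(n^e)$ with $s\geqslant 1$: there the Quot scheme is $\PP^{r\binom{n+e}{n}-1}$ (Proposition~\ref{p:3rd}), whose dimension exceeds $\dim\Gr(s,r)+\dim\Hilb^{p_\lambda}(\PP^n)$. What actually makes the reduction work is that $h^0\bigl(\Hom_S(I,S)\bigr)=\binom{n+a_n}{n}$ for \emph{every} Borel-fixed ideal $I$ with Hilbert polynomial $p_\lambda$. Hence tangent-dimension differences between Borel-fixed points of the Quot scheme coincide with those in the Hilbert scheme (Proposition~\ref{p:s+1}).

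Finally, these comparisons require $n\geqslant 2$, and your plan never addresses $n=1$. For that case the paper invokes Str\o mme's theorem that all Quot schemes on $\PP^1$ are smooth.
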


\noindent
Dovetailing with the special case $r = 1$, the ($r=s+1$)-case can be rephrased
as combinatorial conditions on $\lambda$.  Indeed, \cite{SS23}*{Theorem~A}
already enumerates all integer partitions $\lambda$ for which the Hilbert
scheme $\Hilb^{\kern0.5pt p_{\lambda}} \kern-0.5pt (\PP^n)$ is smooth; see
Remark~\ref{r:Hil} for the details.

This expression for the Hilbert polynomial as a sum of binomial coefficients
already appears in \cite{Got78}*{\S2}; also see
\cite{BH93}*{Theorem~4.3.2}. Proposition~\ref{p:nonempty} relates the pair
$(s,\lambda)$ to the geometry of Quot schemes.

\subsection*{Contrasting Quot schemes and Hilbert schemes}
The differences between smooth Quot schemes and smooth Hilbert schemes are
more pronounced than their similarities.  The classification of smooth Quot
schemes falls into three cases, depending solely on the rank $r$ of the
ambient vector bundle.  The ($r=s$)-case is trivial: The unique point on the
Quot scheme corresponds to the ambient vector bundle
$\mathcal{O}_{\PP^{n}}^{\!r}$ itself. In the ($r=s+1$)-case, the scheme
$\Quot^{\kern0.5pt q} \kern-0.5pt (\mathcal{O}_{\PP^n}^{\! r})$ is locally a
product of a projective space and the Hilbert scheme
$\Hilb^{\kern0.5pt p_{\lambda}} \kern-0.5pt (\PP^n)$; see
Proposition~\ref{p:s+1} and Remark~\ref{r:bun}. Thus, studying these Quot
schemes reduces to understanding Hilbert schemes.

The third case is different.  When $r \geqslant s+2$, smooth Quot schemes are
characterized by four distinct conditions in contrast to seven for Hilbert
schemes; see Remark~\ref{r:Hil}.  As an exemplar of this dichotomy, the
($n=2$)-case for Hilbert schemes warrants a separate condition whereas all
conditions in the ($r \geqslant s+2$)-case are about integer
partitions. Additionally, it is much easier to identify the Quot schemes with
a unique Borel-fixed point than it is for Hilbert schemes; compare
Corollary~\ref{c:!b} and \cite{Sta20}*{Theorem~1.1}.  At first glance, the gap
separating the second and third conditions in the ($r\geqslant s+2$)-case
might resemble the idiopathic singular Hilbert schemes associated to two skew
lines and twisted cubic curves; see \cite{SS23}*{Example~4.4}.  However, these
gaps are not the same.  Remark~\ref{r:gap} analyzes the novel Quot schemes
responsible for this gap and distinguishes them from the relevant Hilbert
schemes.

In summary, smooth Quot and Hilbert schemes coincide in a few special
situations, but the classification when $r \geqslant s+2$ is, surprisingly,
less complicated than the classification when $r = s+1$.

\subsection*{Geometric properties of Quot schemes}
The characterization of smooth Quot schemes hinges on their intrinsic
combinatorial features.  As a benefit, the techniques from combinatorial
algebraic geometry yield new geometric insights into
$\Quot^{\kern0.5pt q} \kern-0.5pt (\mathcal{O}_{\PP^{n}}^{\! r})$. Three
discoveries merit particular attention.

First, Theorem~\ref{t:smooth} establishes that the lexicographic point on
$\Quot^{\kern0.5pt q} \kern-0.5pt (\mathcal{O}_{\PP^{n}}^{\! r})$ is always
smooth, generalizing \cite{RS97}*{Theorem~1.4}. Unlike the proof for Hilbert
schemes, we construct a full-dimensional family on the lexicographic component
without providing an explicit presentation for a corresponding module over the
homogeneous coordinate ring of $\PP^{n}$. When $\lambda = (n^{e-1},1)$,
Proposition~\ref{p:fam} illustrates the power of this approach by highlighting
the unavoidable intricacies in exhibiting a presentation for the family.

As second geometric development, we identify three new countable collections
of nonsingular Quot schemes by relating them to Grassmannians and Hilbert
schemes. Unlike Hilbert schemes, most smooth Quot schemes to the best of our
knowledge do not already appear in the literature. Collectively,
Proposition~\ref{p:gr}, Theorem~\ref{t:prod}, and Proposition~\ref{p:3rd}
demonstrate that
\[
    \Quot^{\kern0.5pt q} \kern-0.5pt (\mathcal{O}_{\PP^{n}}^{\! r})
    \cong \begin{cases}
        \Gr(s,r) 
        & \text{if $\lambda = ()$,} \\[-1pt]
        \PP^{r-1} \times \Hilb^{\kern0.5pt p_{\lambda}}(\PP^n) 
        & \text{if $s = 0$ and $\lambda_{e} \geqslant 2$,} \\[-1pt]
        \PP^{r \binom{n+e}{e} -1}
         & \text{if $r = s+1$ and $\lambda = (n^{e})$,}
    \end{cases}
\]
where $\Gr(s,r)$ denotes the Grassmannian of rank $s$ quotients; see
Section~\ref{s:Gr}. Example~\ref{e:rig} describes the flat deformations of the
tangent bundle on $\PP^n$. These isomorphisms underscore the geometric
significance of the pair $(s, \lambda)$.

Our third geometric contribution provides a thorough analysis of the Quot
schemes determined by the integer partitions $(2,1)$ and $(2,2,1)$ when
$r = 2$ and $s = 0$; see Example~\ref{e:21} and Example~\ref{e:221}.  These
sporadic singular Quot schemes, with their superficial similarities to the
Hilbert schemes containing two skew lines or twisted cubic curves,
unexpectedly deviate from the apparent pattern.

\subsection*{Broader setting and potential generalizations}
Our deliberate focus on the trivial vector bundle $\mathcal{O}_{\PP^n}^{\!r}$
is tantamount to assuming that the quotient sheaves are generated in a single
degree.  This assumption sidesteps some notational and technical issues.  When
the ambient vector bundle is the direct sum
$\bigoplus_{i=1}^{r} \mathcal{O}_{\PP^n}(-d_i)$, for some integers
$d_1 \leqslant d_2 \leqslant \dotsb \leqslant d_r$, the same methods should
apply. For instance, \cite{Par96}*{Theorem~36} establishes the
path-connectedness of
$\Quot^{\kern0.5pt q} \kern-0.5pt \bigl( \bigoplus_{i=1}^{r}
\mathcal{O}_{\PP^n}(-d_i) \kern-1.5pt \bigr)$.  However, the characterization
of Borel-fixed points in this more general setting introduces some challenges;
see \cite{Par96}*{Proposition~6}.  Chief among them, an algebraic description
of the Borel-fixed points is much more complicated.  For an arbitrary vector
bundle $\mathcal{E}$ on $\PP^{n}$, the scheme
$\Quot^{\kern0.5pt q} \kern-0.5pt (\mathcal{E})$ has far fewer symmetries, so
one cannot captialize on Borel-fixed points in the same way. If one goes
further and replaces $\PP^{n}$ with another projective variety---even a
Grassmannian or smooth toric variety---then as far as we know essentially
nothing is known about the geometry of the associated Quot schemes. Put
succinctly, our results mark a natural boundary for current techniques.

\section{Nonempty Quot Schemes}
\label{s:non}

\noindent
This section provides an effective numerical condition on a polynomial $q$ in
$\QQ[t]$ that characterizes when the scheme
$\Quot^{\kern0.5pt q} \kern-0.5pt(\mathcal{O}_{\PP^n}^{\!r})$ is nonempty. It
also collects some pertinent observations about integer partitions,
smoothness, Borel-fixed points, and lexicographic points.

\subsection*{Integer partitions and univariate polynomials}

A combinatorial family of polynomials plays an oversized role in our analysis.
A sequence $\lambda \coloneqq (\lambda_1, \lambda_2, \dotsc, \lambda_{e})$ of
integers is an \emph{integer partition} if
$\lambda_1 \geqslant \lambda_2 \geqslant \dotsb \geqslant \lambda_{e}
\geqslant 1$.  The nonnegative integer $e$ is the \emph{length} of $\lambda$
and each $\lambda_i$ is referred to as a \emph{part} of $\lambda$.  The empty
partition $\lambda = ()$ has length $0$.

To each integer partition
$\lambda \coloneqq (\lambda_1, \lambda_2, \dotsc, \lambda_{e})$, we associate
the integer-valued polynomial
\[ 
    p_{\lambda\!}(t)
    \coloneqq \sum_{i=1}^{e} \binom{t+\lambda_i-i}{\lambda_i-1} 
    \in \QQ[t] \, .
\]
Each summand in the definition of the polynomial $p_{\lambda}$ depends on the
\emph{size} $\lambda_i$ of the part, and the \emph{position} $i$ of the part
within the integer partition $\lambda$.  For any integer partitions $\lambda$
and $\mu$, the equality $p_{\lambda} = p_{\mu}$ of polynomials is equivalent
to the equality $\lambda = \mu$; compare with \cite{BH93}*{Lemma~4.2.7}.

Both \cite{Bre98}*{Theorem~3.5.i} and \cite{Del16}*{Lemma~3.4} prove that the
set of all polynomials associated to integer partitions is closed under
addition.  The next lemma shows that some of these polynomials cannot be
expressed as a nontrivial sum.

\begin{lemma}
    \label{l:sum} 
    For any triple $\lambda$, $\mu$, and $\nu$ of nonempty integer partitions
    satisfying $p_{\lambda} = p_{\mu} + p_{\nu}$, the integer partition
    $\lambda$ must have at least one part of size $1$.  When $\lambda$ has
    exactly one part of size $1$, there are two distinct possibilities:
    \begin{compactitem}[\phantom{W}$\bullet$]
    \item $\lambda = (\lambda_{1},2,1)$, $\mu = (\lambda_{1})$, $\nu = (2)$,
      and $\lambda_1 \geqslant 2$; or
    \item $\lambda = (\lambda_{1}, \lambda_{2}, \dotsc, \lambda_{e-1}, 1)$,
      $\mu = (\lambda_{1}, \lambda_{2}, \dotsc, \lambda_{e-1})$, $\nu = (1)$,
      and $\lambda_{e-1} \geqslant 2$.
    \end{compactitem}
\end{lemma}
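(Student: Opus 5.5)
The plan is to proceed by induction on the largest part, using the backward difference operator $\Delta f(t) \coloneqq f(t) - f(t-1)$.

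\textbf{Difference rule.} Pascal's rule gives
\[
\binom{t+\lambda_i-i}{\lambda_i-1} - \binom{t-1+\lambda_i-i}{\lambda_i-1} = \binom{t+(\lambda_i-1)-i}{(\lambda_i-1)-1}.
\]
Parts of size $1$ contribute the constant $1$ and so vanish under $\Delta$. They also sit at the end of $\lambda$, so the positions of the remaining parts are unchanged. Hence $\Delta p_\lambda = p_{\bar\lambda}$, where $\bar\lambda$ is obtained by subtracting $1$ from every part and deleting the zeros.

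Conversely, write $k_\lambda$ for the number of parts of size $1$ in $\lambda$. Then $\lambda = (\bar\lambda + 1, 1^{k_\lambda})$, and
\[
p_\lambda = p_{\bar\lambda+1} + k_\lambda,
\]
since appending $1$'s adds $1$ each regardless of position. Applying $\Delta$ to $p_\lambda = p_\mu + p_\nu$ and using injectivity of $\lambda \mapsto p_\lambda$ yields $p_{\bar\lambda} = p_{\bar\mu} + p_{\bar\nu}$. Comparing constants then gives
\[
k_\lambda = k_\mu + k_\nu + c, \qquad c \coloneqq p_{\bar\mu+1} + p_{\bar\nu+1} - p_{\bar\lambda+1} \in \ZZ,
\]
and $c$ is a constant because all three polynomials have difference $p_{\bar\lambda}$.

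\textbf{Case analysis.}
\begin{compactitem}[\phantom{W}$\bullet$]
\item If $\bar\nu = ()$, then $\nu = (1^{k_\nu})$ with $k_\nu \geqslant 1$, $\bar\lambda = \bar\mu$, and $c = 0$. So $\lambda = (\bar\mu+1, 1^{k_\mu+k_\nu})$ has a part of size $1$. Exactly one such part forces $k_\mu = 0$ and $k_\nu = 1$, which is the second bullet. There $\lambda_{e-1} \geqslant 2$ and $\mu$ is nonempty because $\bar\mu$ must be nonempty.
\item If $\bar\mu = ()$, the argument is symmetric.
\item If $\bar\mu$ and $\bar\nu$ are both nonempty, the goal is to prove $c \geqslant 1$, with equality only when $\mu = (\lambda_1)$ and $\nu = (2)$ (up to swapping $\mu$ and $\nu$). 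Given this, $k_\lambda \geqslant 1$ always, and $k_\lambda = 1$ forces $k_\mu = k_\nu = 0$ and $c = 1$, which is the first bullet.
\end{compactitem}

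\textbf{The main obstacle: computing $c$.} I would compute $c$ recursively. By induction applied to $p_{\bar\lambda} = p_{\bar\mu} + p_{\bar\nu}$, the partition $\bar\lambda$ is determined by the same decomposition one level down. Concretely, one shows that the "merge" producing $\bar\lambda$ from $\bar\mu$ and $\bar\nu$ lifts to a merge producing $\bar\lambda+1$ from $\bar\mu+1$ and $\bar\nu+1$, except that the block coming from $\bar\nu$ is shifted in position by roughly the length of $\bar\mu$. Each summand $\binom{t+a-i}{a-1}$ in which the position $i$ increases by $1$ loses exactly $\binom{t+a-i}{a-2}$. So $c$ can be evaluated as a sum of such telescoping corrections.

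To pin down the value, I would evaluate at a convenient $t$, namely $t = e - 1$ or a large $t$ compared through leading terms, where most binomials become $0$ or $1$. The expected outcome is that $c$ equals the length of $\bar\nu$, or a similar positive count. This is consistent with the base check $\mu = (a)$, $\nu = (2)$: here $\lambda = (a,2,1)$, $\bar\mu = (a-1)$, $\bar\nu = (1)$, and $c = 1$.

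If that recursive bookkeeping becomes unwieldy, the fallback is to invoke the explicit addition formula for $p_\mu + p_\nu$ from \cite{Bre98}*{Theorem~3.5.i} or \cite{Del16}*{Lemma~3.4}. I would read off directly that the resulting partition ends in at least one $1$, and that it ends in exactly one $1$ only in the two listed configurations.
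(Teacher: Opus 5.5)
There is a genuine gap: your case split leaves the entire content of the lemma to the third case, and that case is not proved.

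Your $\Delta$-reduction itself is sound. Pascal's rule gives $\Delta p_\lambda = p_{\bar\lambda}$, the identity $p_\lambda = p_{\bar\lambda+1} + k_\lambda$ holds, and $c$ is a constant. This correctly handles the case where $\mu$ or $\nu$ consists only of parts of size $1$.

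The difficulty is what $c$ actually is. By closure under addition there is a partition $\delta$ with $p_\delta = p_{\bar\mu+1} + p_{\bar\nu+1}$. Since $\Delta p_\delta = p_{\bar\lambda}$, injectivity gives $\bar\delta = \bar\lambda$. So $c = k_\delta$ is exactly the number of parts of size $1$ in the sum of $\bar\mu+1$ and $\bar\nu+1$, neither of which has a part of size $1$. Hence your claim ``$c \geqslant 1$, with equality only for $\{(a),(2)\}$'' is precisely the lemma restricted to partitions without parts of size $1$. The difference operator only peels off the $1$'s. Moreover, $\Delta$ kills exactly the constant $c$, so an induction hypothesis applied to $(\bar\mu,\bar\nu)$ says nothing about it.

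Your proposed way of computing $c$ (a lifted merge with shifted positions) is never specified, and the predicted value is wrong. For example, $p_{(3)}+p_{(3)} = p_{(3,3,2,1,1)}$ gives $c = 2$, although $\bar\nu = (2)$ has length $1$. Likewise $p_{(4)}+p_{(4)} = p_{(4,4,3,2,2,1^6)}$ gives $c = 6$. The fallback of reading the answer off an addition formula in the references also postpones the argument rather than supplying it.

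The missing idea is a monovariant for the merge, which is how the paper argues. List the summands of $p_\mu$ and $p_\nu$ in order. Then repeatedly replace the first out-of-place term $\binom{t+d-j}{d-1}$ using the parallel sum identity
\[
\binom{t+d-j}{d-1} = \sum_{k=0}^{d-1} \binom{t+(d-k)-(j+1)}{(d-k)-1} .
\]
A move on a term with $d \geqslant 2$ creates exactly one new constant term, namely the summand with $k = d-1$. Moving a constant creates nothing. So constants are never destroyed, and each one ends up as a part of size $1$ in $\lambda$. This gives
\[
k_\lambda = k_\mu + k_\nu + (\text{number of moves on nonconstant terms}),
\]
and that count of moves is exactly your $c$.

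Now assume $\mu_1 \geqslant \nu_1$. The term $\binom{t+\nu_1-1}{\nu_1-1}$ has $j=1$ but lies in position at least $2$. So either $\nu_1 = 1$ or this term must be moved, and in both cases $k_\lambda \geqslant 1$. If $k_\lambda = 1$, there are two possibilities:
\begin{itemize}
\item $\nu = (1)$ and $\mu$ has no part of size $1$; or
\item exactly one move occurs, applied to that term, and neither $\mu$ nor $\nu$ has a part of size $1$.
\end{itemize}
In the second situation, any nonconstant term spawned by the move would have $j=2$ but sit in position at least $3$, forcing a second move. This gives $\nu_1 = 2$. Any further part of $\mu$ or $\nu$ would then also force a second move, so $\mu = (\lambda_1)$ and $\nu = (2)$.
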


\begin{proof}
  We begin by describing a recursive algorithm that outputs the integer
  partition $\lambda$ given the integer partitions $\mu$ and $\nu$ as input.
  To express the polynomial $p_{\mu} + p_{\nu}$ as a suitable sum of binomial
  coefficients, we first list the summands in the defining expressions for
  $p_{\mu}$ and $p_{\nu}$ in such a way that the denominators are
  nonincreasing and, among those with the same denominator, the numerators are
  also nonincreasing.  Each term in this list has the form
  $\smash{\binom{t+d-j}{d-1}}$, where $d$ and $j$ are positive integers.  By
  construction, there are at least $j-1$ terms listed before the term
  $\smash{\binom{t+d-j}{d-1}}$.  To proceed, we select the first term
  $\smash{\binom{t+d-j}{d-1}}$ in the list that has more than $j-1$ terms
  listed before it.  We now apply the parallel sum identity for binomial
  coefficients:
  \begin{align*}
    \binom{t + d -j}{d -1}
    &= \sum_{\ell=0}^{d -1} \binom{t-j + \ell}{\ell} 
      = \sum_{k=0}^{d -1} \binom{t+(d-k) - (j+1)}{(d-k) -1} \, . 
  \end{align*}
  This identity allows us to replace the term $\smash{\binom{t+d-j}{d-1}}$
  with $\smash{\binom{t+d-(j+1)}{d-1}}$, while simultaneously appending $d-1$
  new terms with denominators $d-2, d-3, \dotsc, 0$ to the list.  Iterating
  this process, we eventually obtain a list where, for every positive integer
  $j$, the term of the form $\smash{\binom{t+d-j}{d-1}}$ appears in position
  $j$. The sum of all the terms in this final list gives the defining
  expression for the polynomial $p_{\lambda}$.

  By symmetry, we may assume that $\mu_1 \geqslant \nu_1$.  Since $\mu$ and
  $\nu$ are nonempty, one must apply the parallel sum identity to at least
  $\smash{\binom{t + \nu_1 -1}{\nu_1-1}}$ in order to find $\lambda$.  This
  ensures that $\lambda$ has at least one part of size $1$.  When $\lambda$
  has exactly one part of size $1$, either $\nu = (1)$ or the parallel sum can
  only be applied once, as every term with zero denominator corresponds to a
  part of size $1$ in $\lambda$. When $\nu = (1)$, we see that
  $\lambda = (\lambda_{1}, \lambda_{2}, \dotsc, \lambda_{e-1}, 1)$ and
  $\mu = (\lambda_{1}, \lambda_{2}, \dotsc, \lambda_{e-1})$, where
  $\lambda_{e-1} \geqslant 2$. In the other case, $\nu_1$ must be the second
  part of $\lambda$, so there exists an integer $\lambda_{1} \geqslant 2$ such
  that $\mu = (\lambda_{1})$.  Moreover, as at most one term is appended, we
  deduce that $\nu_1 \leqslant 2$.  From the equations
  \begin{align*}
    \binom{t+(2-1) - 2}{(2)-2} 
    &= \binom{t}{0}
      = 1
      = \binom{t+(1) - 3}{(1)-1} 
      = \binom{t+(1) - 2}{(1)-1} \, , 
  \end{align*}
  we conclude that  $\lambda = (\lambda_{1},2,1)$, $\mu = (\lambda_{1})$, $\nu = (2)$, and $\lambda_{1} \geqslant 2$.
\end{proof}

\begin{example}
  \label{e:alg}
  When $\mu = (2)$ and $\nu = (2)$, we see that $\lambda = (2,2,1)$ because
  \begin{align*}
    p_{(2)} + p_{(2)}
    &= \binom{t+2-1}{2-1} + \binom{t+2-1}{2-1} \\
    &= 2t+2 
      = \binom{t+2-1}{2-1} + \binom{t+2-2}{2-1} + \binom{t+1-3}{1-1}
      = p_{(2,2,1)} \, . \qedhere
  \end{align*}
\end{example}

The algorithm in the proof of Lemma~\ref{l:sum} may be visualized as a game on
nonnegative matrices.

\begin{remark}
  As an analog to the Pascal matrix, let $\mathbf{P}$ be the matrix whose
  $(j,k)$-entry is $\binom{t+k-j}{k-1}$ for all positive integers $j$ and
  $k$. Since the entries in each row of $\mathbf{P}$ form a basis for the
  $\ZZ$-module of integer-valued polynomials in $\QQ[t]$, every integer-valued
  polynomial $q(t) \in \QQ[t]$ may be expressed as the Frobenius inner product
  of an integer matrix $\mathbf{C} \coloneqq [ c_{j,k} ]$ and $\mathbf{P}$:
  \[
    q(t) 
    = \langle \mathbf{C}, \mathbf{P} \rangle_{\text{F}}
    = \sum_{j \geqslant 1} \sum_{k \geqslant 1} c_{j,k} \binom{t+k-j}{k-1} 
    \in \QQ[t] \, . 
  \]
  From this perspective, the parallel sum identity for binomial coefficients becomes
  \[
    \binom{t+k-j}{k-1}
    = \langle \mathbf{E}_{j,k}, \mathbf{P} \rangle_{\text{F}}
    = \sum_{i=1}^{k} \langle \mathbf{E}_{j+1, i}, \mathbf{P} \rangle_{\text{F}} 
    = \sum_{i=1}^{k} \binom{t+i-(j+1)}{i-1}
    \, ,
  \]
  where the matrix unit $\mathbf{E}_{j,k}$ has only one nonzero entry with
  value $1$ in the $j$th row and $k$th column.
    
  The game starts with a nonnegative integer matrix $\mathbf{C}$ such that
  $q(t) = \langle \mathbf{C}, \mathbf{P} \rangle_{\text{F}}$. A turn involves
  using the parallel sum identity to change the matrix representing the
  polynomial $q(t)$. The game ends when the matrix has at most one nonzero
  entry in each row whose value is $1$. For instance, the matrix
  reinterpretation of Example~\ref{e:alg} is
  $2 \mathbf{E}_{1,2} \rightsquigarrow \mathbf{E}_{1,2} + \mathbf{E}_{2,2} +
  \mathbf{E}_{2,1} \rightsquigarrow \mathbf{E}_{1,2} + \mathbf{E}_{2,2} +
  \mathbf{E}_{3,1}$ or
  \[
    \begin{bmatrix}
      0 & 2 & 0  \\[-2pt]
      0 & 0 & 0  \\[-2pt]
      0 & 0 & 0  \\[-2pt]
    \end{bmatrix}
    \rightsquigarrow
    \begin{bmatrix}
      0 & 1 & 0  \\[-2pt]
      1 & 1 & 0  \\[-2pt]
      0 & 0 & 0  \\[-2pt]
    \end{bmatrix}
    \rightsquigarrow
    \begin{bmatrix}
      0 & 1 & 0 \\[-2pt]
      0 & 1 & 0 \\[-2pt]
      1 & 0 & 0  \\[-2pt]
    \end{bmatrix} \, .
  \]
\end{remark}

\begin{remark} 
  Lemma~\ref{l:sum} can also be understood via residual flags; see
  \cite{SS23}*{Definition~1.4}. Assuming that the integer partition $\lambda$
  has no part equal to $1$, every closed subscheme of $\PP^n$ with Hilbert
  polynomial $p_{\lambda}$ is a residual flag; see
  \cite{SS23}*{Theorem~3.2}. The existence of the expression
  $p_{\lambda} = p_{\mu} + p_{\nu}$ would mean that the residual flag
  associated to $p_{\lambda}$ is the disjoint union of two other schemes,
  which is impossible.
\end{remark}

\subsection*{Smoothness is local on the base}

We reduce the study of smoothness to nonsingular Quot schemes over a
field. For any positive integer $n$, let $\PP^n$ denote $n$-dimensional
projective space over a Noetherian base scheme $Y$.  Consider a locally-free
sheaf $\mathcal{E}$ on $\PP^n$. For any scheme $X$ over $Y$, set
$\mathcal{E}_X$ to be the pullback of $\mathcal{E}$ along the canonical
projection $\PP^n_X \to \PP^n$, where
$\PP^n_X \coloneqq \PP^n \mathbin{\times_{\! Y}^{}} X$ is the fibre product of
$\PP^n$ and $X$ over $Y$. Two quotients of $\mathcal{E}_X$ are equivalent if
the kernels of the quotient maps coincide as subsheaves of
$\mathcal{E}_X$. For any polynomial $q \in \QQ[t]$, the \emph{Quot scheme}
$\Quot^{\kern0.5pt q} \kern-0.5pt(\mathcal{E})$ is the projective scheme whose
$X$-valued points are equivalence classes of coherent
$\mathcal{O}_{\PP^n_{X}}$-module quotients $\mathcal{F}$ of $\mathcal{E}_{X}$,
where $\mathcal{F}$ is flat over $X$ and has Hilbert polynomial $q$ on each
fibre.  For further details, see \cite{Gro61}*{Th\'{e}or\`{e}me~3.1},
\cite{Ser06}*{Theorem~4.4.1}, or \cite{HL10}*{Theorem~2.2.4}.

We show that the smoothness of $\Quot^{\kern0.5pt q} \kern-0.5pt(\mathcal{E})$
is a local property; compare with \cite{SS23}*{Lemma~3.1}.

\begin{lemma}
  \label{l:smooth}
  Let $\PP^n$ denote $n$-dimensional projective space over a Noetherian base
  scheme $Y$ and let $\mathcal{E}$ be a locally-free
  $\mathcal{O}_{\PP^{n}}$-module of constant finite rank $r$.  For any
  polynomial $q \in \QQ[t]$, the Quot scheme
  $\Quot^{\kern0.5pt q} \kern-0.5pt(\mathcal{E})$ is smooth over $Y$ if and
  only if its fibre is nonsingular over every point of $Y$.
\end{lemma}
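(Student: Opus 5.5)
The plan is to combine the fibrewise criterion for smoothness with the fact that the formation of Quot schemes commutes with base change. The key tool is: a morphism locally of finite presentation that is flat, with geometrically regular fibres, is smooth (EGA IV, 17.5.1). Since $Y$ is Noetherian and $\Quot^{\kern0.5pt q} \kern-0.5pt(\mathcal{E}) \to Y$ is projective, it is of finite presentation. So the whole problem reduces to showing that flatness over $Y$ comes for free once the fibres are nonsingular. This is the nontrivial direction. The converse is immediate, because smoothness is stable under base change and a smooth scheme over a field is regular.

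First I record base change. For any morphism $Y' \to Y$ we have
\[
  \Quot^{\kern0.5pt q}(\mathcal{E}) \times_Y Y' \cong \Quot^{\kern0.5pt q}(\mathcal{E}_{Y'}),
\]
since both represent the same functor on $Y'$-schemes. In particular, the fibre over a point $y \in Y$ is the Quot scheme of $\mathcal{E}_{\kappa(y)}$ on $\PP^n_{\kappa(y)}$. I interpret \emph{nonsingular} as smooth over $\kappa(y)$, i.e.\ geometrically regular. If only regularity is meant, I would pass to $\overline{\kappa(y)}$ and use the same base change, so that nonsingularity is taken over the algebraic closure.

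Second, I verify formal smoothness directly via the infinitesimal lifting criterion, rather than proving flatness separately. The obstruction to lifting a point of the Quot scheme along a small extension $0 \to I \to A' \to A \to 0$ of local Artinian $\mathcal{O}_{Y}$-algebras lies in $\Ext^1(\mathcal{K}, \mathcal{F}) \otimes I$. Here $0 \to \mathcal{K} \to \mathcal{E}_{\kappa} \to \mathcal{F} \to 0$ is the closed-fibre quotient, and the relevant Ext group depends only on that fibre. This is where I expect the main obstacle. The fibre being smooth does not by itself say the obstruction vanishes; it only says that deformations over $\kappa$-algebras are unobstructed. A direct approach would have to compare lifting over $A'$ (mixed characteristic, or nontrivial over the base) with lifting over $\kappa$, and this is not automatic.

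So my first attempt is the structural route instead. I would use the existence of a point-free description: the Quot scheme embeds in a Grassmannian over $Y$ via a sufficiently large twist. The key fact I would then try to establish is flatness of $\Quot \to Y$, in the spirit of \cite{SS23}*{Lemma~3.1}. The argument there for Hilbert schemes is, I expect, the model: use that the tangent space dimension of each fibre equals $\dim \Hom(\mathcal{K},\mathcal{F})$. This dimension is upper semicontinuous on the total space. Fibrewise smoothness forces the fibre dimension to equal this tangent dimension at every point. Then I would apply the local flatness criterion, namely that a local ring map whose closed fibre is regular of the expected dimension gives flatness when the total space is suitably Cohen--Macaulay.

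If this dimension-counting route stalls, the fallback is to cite the general result that a finitely presented morphism whose fibres are all smooth and which represents a functor satisfying a deformation-theoretic obstruction theory relative to $Y$ is smooth. I would invoke this as in \cite{SS23}*{Lemma~3.1}, since the statement here is the stated analogue of that lemma. Once flatness is in hand, EGA IV 17.5.1 finishes the proof.
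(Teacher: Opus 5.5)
\textbf{There is a genuine gap: flatness of $\Quot^{q}(\mathcal{E}) \to Y$, which your proposal never establishes.}

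You reduce the lemma to flatness correctly; after that, the fibrewise criterion (EGA~IV~17.5.1, or Tag~01V8 as the paper uses) finishes. You also rightly note that nonsingular fibres do not kill obstructions in $\Ext^1(\mathcal{K},\mathcal{F})$ for deformations over $\mathcal{O}_{Y,y}$-algebras. Neither of your routes supplies flatness, though.
\begin{itemize}
\item Miracle flatness needs three things: a regular base, a Cohen--Macaulay source, and the equality $\dim \mathcal{O}_{Q,x} = \dim \mathcal{O}_{Y,y} + \dim \mathcal{O}_{Q_y,x}$. Here $Y$ is an arbitrary, possibly nonreduced, Noetherian scheme. Nothing makes the Quot scheme Cohen--Macaulay. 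The dimension equality is exactly what is in doubt, and semicontinuity of $\dim \Hom(\mathcal{K},\mathcal{F})$ says nothing about $\dim \mathcal{O}_{Q,x}$.
\item The ``general result'' in your fallback is not a theorem. The closed immersion $\Spec \kk \hookrightarrow \Spec \kk[\epsilon]/(\epsilon^2)$ is finitely presented with nonsingular fibre, yet it is not flat.
\end{itemize}

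\textbf{The gap cannot be closed at the stated generality, because the ``if'' direction is false.} Take $Y = \Spec \kk[\epsilon]/(\epsilon^2)$, $n = 1$, $q(t) = t$. Let $\mathcal{E}$ be the extension $0 \to \mathcal{O}_{\PP^1_Y}(-1) \to \mathcal{E} \to \mathcal{O}_{\PP^1_Y}(1) \to 0$ with class $\epsilon \xi$, where $\xi$ generates $\Ext^1\bigl(\mathcal{O}_{\PP^1_Y}(1), \mathcal{O}_{\PP^1_Y}(-1)\bigr) \cong H^1\bigl(\PP^1_Y, \mathcal{O}_{\PP^1_Y}(-2)\bigr)$.
\begin{itemize}
\item On the closed fibre, $\mathcal{E}$ becomes $\mathcal{O}(1) \oplus \mathcal{O}(-1)$. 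The kernel of any quotient with Hilbert polynomial $t$ must be $\mathcal{O}(1)$, embedded as the summand. Since $\Hom\bigl(\mathcal{O}(1), \mathcal{O}(-1)\bigr) = 0$, the fibre of $\Quot^{t}(\mathcal{E})$ is a single reduced point, hence nonsingular.
\item A lift over $Y$ would have kernel $\mathcal{O}_{\PP^1_Y}(1)$ mapping isomorphically onto the quotient $\mathcal{O}_{\PP^1_Y}(1)$ of $\mathcal{E}$. That would split a nonsplit extension. So the $\kk$-point does not lift, and $\Quot^{t}(\mathcal{E})$ is not smooth over $Y$.
\end{itemize}

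\textbf{The paper's proof has the same hole.} It says the structure map is strongly projective, ``implying that it is flat'' via \cite{AK80}*{Theorem~2.6}. The example above shows this step is not valid: strong projectivity gives finite presentation, not flatness. Your suspicion was therefore well founded.

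\textbf{What would rescue the case the paper actually uses,} $\mathcal{E} = \mathcal{O}_{\PP^n}^{\! r}$, is a genuine flatness argument with extra input. Your dimension-count idea can be made to work there, along these lines.
\begin{itemize}
\item By base change, it suffices to work over $\ZZ_{(p)}$.
\item The fibres are connected, so a nonsingular fibre is irreducible.
\item The lexicographic point is a $\ZZ$-point whose tangent dimension $D$ is characteristic-free, by Theorem~\ref{t:smooth} and Proposition~\ref{p:dim}.
\item These facts, together with openness of the flat locus, give $\dim \mathcal{O}_{Q,x} = D+1$ at closed points $x$ of the special fibre.
\item Then $\mathcal{O}_{Q,x}/p\,\mathcal{O}_{Q,x}$ regular of dimension $D$ forces $\mathcal{O}_{Q,x}$ to be regular, hence $p$-torsion free, which is flatness.
\end{itemize}
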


\begin{proof}
  The structure map $\Quot^{\kern0.5pt q} \kern-0.5pt(\mathcal{E}) \to Y$ is
  strongly projective, implying that it is flat and locally of finite
  presentation; see \cite{AK80}*{Theorem~2.6}. Hence, the structure map is
  smooth if and only if its fibres are nonsingular over every point of $Y$;
  see
  \cite{SP24}*{\href{https://stacks.math.columbia.edu/tag/01V8}{Tag~01V8}}.
\end{proof}

As a consequence, we may assume that the base scheme $Y$ is the spectrum of a
field $\kk$ when focusing on smoothness.

\subsection*{Borel-fixed points}

In the rest of the document, we assume that the ambient sheaf is the trivial
vector bundle
$\mathcal{E} = \mathcal{O}_{\PP^n}^{\! r} = \bigoplus_{\! j=1}^r
\mathcal{O}_{\PP^n}^{}$ on $\PP^n$ of rank $r$.  Our analysis of
$\Quot^{\kern0.5pt q} \kern-0.5pt (\mathcal{O}_{\PP^n}^{\! r})$ relies on a
natural group action.  To review, the homogeneous coordinate ring of $\PP^n$
is the $\ZZ$-graded polynomial ring $S \coloneqq \kk[x_0,x_1, \dotsc, x_n]$,
where $\deg(x_i) \coloneqq 1$ for all $0 \leqslant i \leqslant n$.  Let
$\bm{e}_1, \bm{e}_2, \dotsc, \bm{e}_r$ denote the standard basis of the free
$\ZZ$-graded $S$-module $S^r \coloneqq \bigoplus_{j=1}^r S$, where
$\deg(\bm{e}_{j}) \coloneqq 0$ for all $1 \leqslant j \leqslant r$.  The Serre
vanishing theorem implies that every quotient $\mathcal{F}$ of
$\mathcal{O}_{\PP^n}^{\! r}$ is determined by an $S$-module
$S^{r} \mathbin{\!\!/\!} K$, where $K$ is a homogeneous submodule of
$S^{r}$. A \emph{monomial} in $S^r$ is an element of the form
$\bm{x}^{\bm{u}} \, \bm{e}_{j}$, where
$\bm{x}^{\bm{u}} \coloneqq \smash{x_0^{u_0} x_1^{u_1} \dotsb \,
  x_{n\phantom{1}}^{u_{n\phantom{1}}}}$ is a monomial in $S$ and
$1 \leqslant j \leqslant r$. A \emph{monomial submodule} of $S^r$ is a
submodule generated by monomials, so every monomial submodule of $S^r$ is
homogeneous and can be expressed as a direct sum
$K = \bigoplus_{j=1}^{r} I_{(j)} \, \bm{e}_j$, where each $I_{(j)}$ is a
monomial ideal in the ring $S$.

For the basic group action, recall that for any nonnegative integer $m$, the
general linear group $\GL(m, \kk)$ consists of all invertible
$(m \mathbin{\times} m)$-matrices with entries in the field $\kk$.

\begin{lemma} 
  The group $\GL(n+1,\kk)\times \GL(r,\kk)$ acts on the free $\ZZ$-graded
  $S$-module $S^r$.
\end{lemma}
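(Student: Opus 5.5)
We define the action explicitly and check the axioms. An element $(g,h) \in \GL(n+1,\kk)\times\GL(r,\kk)$ acts on the generators of $S^r$ as follows. The matrix $g = [g_{i,k}]$ acts on the variables by the linear substitution
\[
  g \cdot x_k \coloneqq \sum_{i=0}^{n} g_{i,k}\, x_i .
\]
This substitution extends uniquely to a degree-preserving $\kk$-algebra automorphism of $S$, because $S$ is a polynomial ring and the images of the variables are again linearly independent linear forms. Likewise, $h = [h_{j,\ell}]$ acts on the basis by
\[
  h \cdot \bm{e}_\ell \coloneqq \sum_{j=1}^{r} h_{j,\ell}\, \bm{e}_j .
\]
Combining the two, we set
\[
  (g,h)\cdot \Bigl(\sum_{\ell} f_\ell\, \bm{e}_\ell\Bigr) \coloneqq \sum_{\ell} (g\cdot f_\ell)\,(h\cdot \bm{e}_\ell) ,
\]
so that the element acts on $S^r = S \otimes_\kk \kk^r$ as $g\otimes h$.

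It then remains to verify the following properties.
\begin{compactenum}[\upshape(i)]
\item The map $f\mapsto g\cdot f$ is a ring automorphism of $S$ preserving degree.
\item The assignments $g\mapsto (f\mapsto g\cdot f)$ and $h\mapsto(\bm{e}_\ell\mapsto h\cdot\bm{e}_\ell)$ are group homomorphisms, that is, compatible with matrix multiplication and sending the identity to the identity. For $g$ this holds on the variables by matrix multiplication and extends to all of $S$ by uniqueness of the algebra extension.
\item The factors commute, since they act on different tensor factors. Hence $(g,h)(g',h') = (gg',hh')$ acts as the composite of the two actions.
\item Each element acts semilinearly: $(g,h)\cdot(f m) = (g\cdot f)\,\bigl((g,h)\cdot m\bigr)$ for $f\in S$ and $m\in S^r$.
\item Each element preserves the grading, because the $\bm{e}_j$ have degree $0$ and $g$ preserves degrees in $S$.
\end{compactenum}

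There is no real obstacle here. The only point needing care is the choice of convention, left versus right action, so that (ii) yields a left action rather than an anti-action. Using the transposes as written above, or equivalently acting through $g^{-1}$ on coordinate functions, makes the composition come out correctly. Once that is fixed, everything else is routine bookkeeping.
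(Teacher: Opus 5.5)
Your proposal is correct and follows the same route as the paper: $\GL(n+1,\kk)$ acts by graded $\kk$-algebra automorphisms of $S$, which induces a semilinear action on $S^r$; $\GL(r,\kk)$ acts through the basis $\bm{e}_1, \dotsc, \bm{e}_r$; and the two actions commute, with you simply writing out the formulas and axiom checks that the paper leaves implicit. One small correction to your closing remark: your formula $g \cdot x_k = \sum_i g_{i,k}\, x_i$ is already a left action, whereas letting $g^{-1}$ act on coordinate functions gives the contragredient action, which is also a valid left action but a different one, not an equivalent one. Under that contragredient action the upper triangular Borel subgroup would no longer match the strongly stable condition the paper uses later.
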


\begin{proof} 
  The group $\GL(n + 1, \kk)$ acts as the automorphisms of the $\ZZ$-graded
  $\kk$-algebra $S$, and this induces a compatible action on the free module
  $S^{r}$. The group $\GL(r,\kk)$ acts on the free module $S^{r}$ via its
  basis vectors. Since the actions of $\GL(n+1, \kk)$ and $\GL(r, \kk)$
  commute, the group $\GL(n+1, \kk) \times \GL(r, \kk)$ acts on the free
  $\ZZ$-graded $S$-module $S^r$.
\end{proof}

The action of a subgroup of $\GL(n+1,\kk)\times \GL(r,\kk)$ is crucial for
understanding
$\Quot^{\kern0.5pt q} \kern-0.5pt(\mathcal{O}_{\PP^{n}}^{\! r})$.  We choose
the \emph{Borel subgroup} of this product group to be
$\operatorname{B}(n+1, \kk) \times \operatorname{B}(r,\kk)$, where
$\operatorname{B}(n+1,\kk)$ and $\operatorname{B}(r,\kk)$ are the subgroups of
upper triangular matrices in $\GL(n+1, \kk)$ and $\GL(r, \kk)$
respectively. The Borel-fixed point theorem~\cite{Bor91}*{Proposition~15.2}
asserts that any nonempty Quot scheme has at least one Borel-fixed point.  In
this context, a Borel-fixed point in
$\Quot^{\kern0.5pt q} \kern-0.5pt(\mathcal{O}_{\PP^{n}}^{\! r})$ is associated
to a quotient module $S^r \mathbin{\!\!/\!} K$, where the submodule $K$ is
invariant under the action of the Borel subgroup
$\operatorname{B}(n+1, \kk) \times \operatorname{B}(r,\kk)$.  Moreover,
\cite{Par96}*{Proposition~6} shows that a submodule $K$ of $S^{r}$ is
Borel-fixed if and only if it satisfies the following three conditions:
\begin{compactenum}[\phantom{W} (B1)]
\item
  $K = I_{(1)} \, \bm{e}_1 \oplus I_{(2)} \, \bm{e}_2 \oplus \dotsb \oplus
  I_{(r)} \, \bm{e}_r$, where each $I_{(j)}$ is a monomial ideal in $S$,
\item for any $1 \leqslant j \leqslant r$, each ideal $I_{(j)}$ is invariant
  under the action of the Borel subgroup, and
\item the sequence
  $I_{(1)} \supseteq I_{(2)} \supseteq \dotsb \supseteq I_{(r)}$ of ideals
  form a descending chain.
\end{compactenum}

\begin{remark} 
  Definition~7 in \cite{Par96} declares that a submodule is \emph{standard}
  Borel-fixed if it satisfies conditions~(B1) and (B3), and, for any monomial
  $\bm{x}^{\bm{u}} \bm{e}_{\kern-0.5pt j}$ in $K$ and any variable $x_{i}$
  that divides $\bm{x}^{\bm{u}}$, the monomial
  $x_{k}^{} \, x_{i}^{-1} \bm{x}^{\bm{u}} \bm{e}_{\kern-0.5pt j}$ belongs to
  $K$ for all $k < i$.  This combinatorial strengthening of condition~(B2) is
  equivalent to asserting that each monomial ideal $I_{(j)}$ is \emph{strongly
    stable}.  Every standard Borel-fixed submodule is Borel-fixed.  If the
  characteristic of the field $\kk$ is $0$, then every Borel-fixed submodule
  is standard.  However, when the characteristic of $\kk$ is a positive prime
  $p$, the $S$-ideal $\langle x_0^{p}, x_1^{p} \rangle$ is a nonstandard
  Borel-fixed ideal.  Fortuitously, we only need standard Borel-fixed
  submodules in our analysis of the Quot schemes
  $\Quot^{\kern0.5pt q} \kern-0.5pt(\mathcal{O}_{\PP^{n}}^{\! r})$, thereby
  producing uniform results over the integers.
\end{remark}

\subsection*{Lexicographic points} 

We identify a distinguished Borel-fixed point on each nonempty
$\Quot^{\kern0.5pt q} \kern-0.5pt(\mathcal{O}_{\PP^{n}}^{\! r})$.  To specify
this point, we utilize a monomial order on the free $S$-module $S^{r}$.  The
\emph{lexicographic order} on monomials $\bm{x}^{\bm{u}}$ and
$\bm{x}^{\bm{v}}$ in $S$ is defined by $\bm{x}^{\bm{u}} > \bm{x}^{\bm{v}}$ if
the first nonzero entry in the integer sequence
$(u_0-v_0, u_1 -v_1, \dotsc, u_n-v_n)$ is positive. The \emph{lexicographic
  order} on monomials $\bm{x}^{\bm{u}} \, \bm{e}_{i}$ and
$\bm{x}^{\bm{v}} \, \bm{e}_{\kern-0.5pt j}$ in $S^{r}$ is defined by
$\bm{x}^{\bm{u}} \, \bm{e}_{i} > \bm{x}^{\bm{v}} \, \bm{e}_{\kern-0.5pt j}$ if
$i < j$ or $i = j$ and $\bm{x}^{\bm{u}} > \bm{x}^{\bm{v}}$. For any
nonnegative integers $d$ and $e$, the \emph{lexicographic subspace} of
$(S^r)_d$ of dimension $e$ is the linear subspace spanned by the first $e$
monomials of $(S^r)_d$ ordered lexicographically.  A submodule $K$ of $S^r$ is
called \emph{lexicographic} if it is homogeneous and, for each degree $d$, the
homogeneous component $K_d$ is a lexicographic subspace of $(S^r)_d$.  It
follows that any lexicographic submodule is a standard Borel-fixed submodule.

Building on Macaulay's characterization of Hilbert polynomials for homogeneous
ideals in $S$, we record the following criterion; see
\cite{BdC88}*{Th{\'e}or{\`e}me~2} and compare with \cite{SS23}*{Lemma~2.3}.

\begin{proposition}
  \label{p:nonempty}
  For any polynomial $q \in \QQ[t]$, the following are equivalent:
  \begin{compactenum}[\upshape (a)]
  \item There exists a surjective morphism
    $\mathcal{O}_{\PP^{n}}^{\! r} \to \mathcal{F}$ of
    $\,\mathcal{O}_{\PP^{n}}$-modules such that
    $\mathcal{F} \neq \mathcal{O}_{\PP^{n}}^{\! r}$ and the Hilbert polynomial
    of $\,\mathcal{F}$ is
    $q(t) = \chi\bigl( \mathcal{F}(t) \kern-1.5pt \bigr) \kern-1.0pt \coloneqq
    \sum_{i} (-1)^{i} \dim_{\kk} H^i \kern-0.5pt \bigl( \PP^{n},
    \mathcal{F}(t) \kern-1.0pt \bigr)$.
  \item There exists a nonnegative integer $s$ and an integer partition
    $\lambda$ such that $r > s$, $n \geqslant \lambda_1$, and
    \[
      q(t) 
      = s \binom{t+n}{n} + p_{\lambda}(t) 
      = s \binom{t+n}{n} + \sum_{i=1}^{e} \binom{t+\lambda_i-i}{\lambda_i-1} 
      \, . 
    \]
  \item There exists a nonzero lexicographic submodule $K$ of $S^r$ such that
    the Hilbert function of the quotient $S^r \mathbin{\!\!/\!} K$ equals
    $q(j)$ for any nonnegative integer $j$.
  \end{compactenum}
\end{proposition}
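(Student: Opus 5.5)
We prove the cycle of implications (c)$\Rightarrow$(a)$\Rightarrow$(b)$\Rightarrow$(c). Throughout we lean on Macaulay's theorem for ideals in its Gotzmann form: a polynomial is the Hilbert polynomial of $S/I$ for some proper nonzero homogeneous ideal $I$ if and only if it equals $p_\mu$ for some partition $\mu$ with $n \geqslant \mu_1$. Here $\mu = ()$ means $q=0$.

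\textbf{(c)$\Rightarrow$(a).} Set $\mathcal{F} \coloneqq \widetilde{S^r/K}$, which is a quotient of $\mathcal{O}_{\PP^n}^{\!r}$. Its Hilbert polynomial agrees with the Hilbert function of $S^r/K$ for $j \gg 0$, hence equals $q$.

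We must also check $\mathcal{F} \neq \mathcal{O}_{\PP^n}^{\!r}$, that is, $\widetilde{K} \neq 0$. Since $K$ is lexicographic, it contains $x_0^{d}\bm{e}_1$ for some $d$. Indeed, a nonzero lexicographic $K_d$ contains the largest monomial $x_0^d \bm{e}_1$, and $K$ is a submodule. Thus $K$ is not supported on the irrelevant ideal, and $\widetilde{K}\ne 0$.

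\textbf{(a)$\Rightarrow$(b).} Let $\mathcal{K}$ be the kernel and $K \coloneqq \bigoplus_d H^0(\mathcal{K}(d)) \subseteq S^r$, which is nonzero. Pass to an initial submodule with respect to the lexicographic (position-over-term) order. This gives a flat degeneration, so the Hilbert polynomial is preserved. The initial module is monomial, $\bigoplus_j I_{(j)}\bm{e}_j$, so
\[
  q = \sum_{j=1}^r \operatorname{HP}(S/I_{(j)}).
\]
Each summand is either $\binom{t+n}{n}$ (when $I_{(j)} = 0$) or $p_{\mu^{(j)}}$ with $\mu^{(j)}_1 \leqslant n$ (Macaulay). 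Let $s$ be the number of zero ideals; then $s<r$ because $K\neq 0$. Finally, the sum of the $p_{\mu^{(j)}}$ is again some $p_\lambda$ with $\lambda_1\leqslant n$, by the closure of such polynomials under addition noted before Lemma~\ref{l:sum}. The bound $\lambda_1 \leqslant n$ is preserved because the algorithm in that proof only produces parts no larger than existing ones.

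\textbf{(b)$\Rightarrow$(c).} Build the module explicitly. Take $I_{(1)}=\dotsb=I_{(s)}=0$ and let $I_{(s+1)}$ be the lexicographic ideal with Hilbert polynomial $p_\lambda$. This ideal exists and is nonzero, since $\lambda_1\leqslant n$ forces the dimension below $n$. Take $I_{(j)}=S$ for $j>s+1$. The resulting $K$ is nonzero because $r>s$.

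The subtle point is that "lexicographic submodule" requires each $K_d$ to be an initial segment in position-over-term order. This holds: in degree $d$, $K_d$ consists of all of $(S^r)_d$ in positions $j>s+1$, which are the smallest, a lex segment in position $s+1$, and nothing in earlier positions. Hence $K_d$ is a final segment, not an initial one.

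To fix this, reverse the roles: put $I_{(1)}=\dotsb=I_{(r-s-1)}=S$, put the lex ideal in position $r-s$, and put zeros after. Now $K_d$ is an initial segment, as (B3)-type ordering also suggests. The Hilbert function of $S^r/K$ then equals $s\binom{j+n}{n}$ plus the Hilbert function of $S/I_{\text{lex}}$.

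It remains to arrange equality with $q(j)$ for all $j \geqslant 0$, not just for large $j$. I expect this to be the main obstacle. The plan is to choose the lex ideal whose Hilbert function equals $p_\lambda(j)$ for every $j\geqslant 0$. Such an ideal should exist because, for the Gotzmann representation, $p_\lambda(j)$ is itself an $O$-sequence in Macaulay's sense, the saturated lex ideal having Gotzmann regularity; this is the content of \cite{BdC88}*{Th\'eor\`eme~2} that we would invoke. If that fails in low degrees, the fallback is to verify directly via the binomial expansion that $j\mapsto p_\lambda(j)$ satisfies Macaulay's growth bound.
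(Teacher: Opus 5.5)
Your three implications follow the paper's own sketch. For (a)$\Rightarrow$(b) you use the lexicographic initial module, Macaulay, and closure under addition. For (b)$\Rightarrow$(c) you use the module $S^{r-s-1}\oplus L(\lambda)\oplus 0^{s}$. For (c)$\Rightarrow$(a) you sheafify. Your side checks are correct and fill small holes in the sketch: $\widetilde{K}\neq 0$ via $x_0^{d}\bm{e}_1\in K$, $s<r$ from $K\neq 0$, parts never growing under the parallel-sum algorithm, and the zero summands having to sit in the last positions.

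\textbf{The gap.} The step you single out as the main obstacle cannot be carried out, because the claim it aims at is false. For any submodule $K$ one has $0\leqslant \dim_{\kk}(S^r/K)_0\leqslant r$, and an $O$-sequence must start with $h(0)=1$, whereas $q(0)$ is unconstrained. Two examples:
\begin{itemize}
\item $r=1$, $s=0$, $\lambda=(1,1)$ gives $q=2$, so $q(0)=2>r$.
\item $n=2$, $r=1$, $s=0$, $\lambda=(2,2,2,2)$ (plane quartics) gives $q(t)=4t-2$, so $q(0)=-2$.
\end{itemize}
In both cases (a) and (b) hold, yet no $K\subseteq S^r$ has Hilbert function $q(j)$ for every $j\geqslant 0$. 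So neither the appeal to the cited theorem nor the fallback check of Macaulay's growth bound for $j\mapsto p_\lambda(j)$ can succeed. Gotzmann regularity only makes the Hilbert function of $S^1/L(\lambda)$ agree with $p_\lambda(j)$ for $j\geqslant e-1$. For instance, when $\lambda=(1,1)$ that Hilbert function is $1,2,2,\dotsc$, not constantly $2$.

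\textbf{The fix.} Read the phrase \emph{for any nonnegative integer $j$} in (c) as \emph{for all $j\gg 0$}, that is, $S^r/K$ has Hilbert polynomial $q$. This is all that (c)$\Rightarrow$(a) uses. It is also all the paper's sketch actually establishes when it says the Hilbert function of $S^1/L(\lambda)$ is $p_\lambda$. With this reading your corrected construction is already complete. Replace any ideal with Hilbert polynomial $p_\lambda$ by the lexicographic ideal with the same Hilbert function. Then $S^{r-s-1}\oplus L(\lambda)\oplus 0^{s}$ is a nonzero lexicographic submodule whose quotient has Hilbert polynomial $s\binom{t+n}{n}+p_\lambda(t)$, exactly as in the paper. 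You should drop the final paragraph of your proposal rather than try to repair it.
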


The nonnegative integer $s$ is the rank of the coherent sheaf $\mathcal{F}$;
see \cite{HL10}*{Definition~1.2.2}. In the edge case
$\mathcal{F} = \mathcal{O}_{\PP^{n}}^{\! r}$, the lexicographic submodule is
$K = 0$ and the Hilbert polynomial is $r \binom{t+n}{n}$, so $s = r$ and
$\lambda$ is the empty integer partition.

\begin{proof}[Sketch of proof]
  As already observed in \cite{Hul95}*{Corollary~6} and
  \cite{Par96}*{Proposition~2}, this is a relatively straightforward extension
  of the ideas in \cite{Mac27}.
    \begin{compactitem}[$\bullet$]
    \item[$(a) \!\Rightarrow\!\! (b)$:] Every quotient $\mathcal{F}\!$ of
      $\mathcal{O}_{\PP^{n}}^{\! r}$ is determined by an $S$-module
      $S^r \mathbin{\!\!/\!} K$ for some homogeneous submodule $K$ in $S^r$.
      The submodule $\operatorname{in}(K)$, consisting of the initial terms of
      elements in $K$ with respect to the lexicographic order, is a monomial
      submodule of $S^r$ with the same Hilbert function as $K$.  Hence, there
      exist monomial ideals $I_{(1)}, I_{(2)}, \dotsc, I_{(r)}$ in $S$ such
      that
      $\operatorname{in}(K) = I_{(1)} \, \bm{e}_1 \oplus I_{(2)} \, \bm{e}_{2}
      \oplus \dotsb \oplus I_{(r)} \, \bm{e}_{r}$.  The Hilbert polynomial $q$
      of $\mathcal{F}$ (and of $S^r \mathbin{\!\!/\!} K$) is, therefore, the
      sum of the Hilbert polynomials for the quotients
      $S^1 \mathbin{\!\!/\!} I_{(i)}$.  The Macaulay theorem implies that, for
      each nonzero ideal $I_{(i)}$, there exists an integer partition
      $\lambda \kern-1.0pt (i)$ such that
      $\lambda \kern-1.0pt (i)_1 \leqslant n$ and the Hilbert polynomial of
      $S^1 \mathbin{\!\!/\!} I_{(i)}$ is $p_{\lambda \kern-1.0pt (i)}$.
      Furthermore, there is an integer partition $\lambda$ such that
      $\lambda_1 \leqslant n$ and the associated polynomial $p_{\lambda}$ is
      the sum of the $p_{\lambda(i)}$, where $I_{(i)} \neq 0$.  Let $s$ be the
      number of ideals $I_{(1)}, I_{(2)}, \dotsc, I_{(r)}$ that are equal to
      $0$.  Since the Hilbert polynomial of $S$ is $\smash{\binom{t+n}{n}}$,
      we deduce that the Hilbert polynomial $q$ of $\mathcal{F}$ is
      $s \, \binom{t+n}{n} + p_{\lambda\!}(t)$.
    \item[$(b) \!\Rightarrow\!\! (c)$:] Given an integer partition $\lambda$
      satisfying $\lambda_1 \leqslant n$, the associated polynomial
      $p_{\lambda}$ satisfies the growth condition in Macaulay's
      characterization; see \cite{BH93}*{Theorem~4.2.10}.  Hence, there exists
      a unique lexicographic ideal $L(\lambda)$ in $S$ such that the Hilbert
      function of $S^1 \mathbin{\!\!/\!} L(\lambda)$ is $p_{\lambda}$.  It
      follows that the monomial submodule
      \[
        \phantom{w} K = \langle 1_{S} \rangle \bm{e}_{1} \oplus \langle 1_{S}
        \rangle \bm{e}_{2} \oplus \dotsb \oplus \langle 1_{S} \rangle
        \bm{e}_{r-s-1} \oplus L(\lambda) \bm{e}_{r-s} \oplus \langle 0_{S}
        \rangle \bm{e}_{r-s+1} \oplus \langle 0_{S} \rangle \bm{e}_{r-s+2}
        \oplus \dotsb \oplus \langle 0_{S} \rangle \bm{e}_{r}
      \]
      is lexicographic and the Hilbert function of the quotient
      $S^r \mathbin{\!\!/\!} K \cong S^1 \mathbin{\!\!/\!} L(\lambda) \oplus
      S^{s}$ is $s \, \binom{t+n}{n} + p_{\lambda}$.
    \item[$(c) \!\Rightarrow\!\! (a)$:] Take $\mathcal{F}$ to be the sheaf
      associated to the quotient $S$-module $S^1 \mathbin{\!\!/\!}
      K$. \qedhere
    \end{compactitem}
\end{proof}

The next examples illustrate how the pair $(s, \lambda)$ give rise to a Quot
scheme.

\begin{example}
  \label{e:1pt}
  When $s = 0$ and $\lambda = (1)$, the associated polynomial is
  $p_{\lambda\!}(t) = 1$.  For all positive integers $n$ and $r$, we have
  $\Quot^{1} \! ( \mathcal{O}_{\PP^n}^{\! r}) = \PP(\mathcal{O}_{\PP^n}^{\!
    r}) \cong \PP^{r-1} \mathbin{\times} \PP^n$; see
  \cite{Kle90}*{Proposition~2.2}.
\end{example}

\begin{example}
  \label{e:TP}
  From the Euler sequence
  \[
    0 
    \longrightarrow \mathcal{O}_{\PP^{n}}
    \longrightarrow \mathcal{O}_{\PP^{n}}^{n+1}(1)
    \longrightarrow \mathcal{T}_{\PP^{n}} 
    \longrightarrow 0 \, ,
  \]
  we see that the Hilbert polynomial of the twisted tangent bundle
  $\mathcal{T}_{\PP^{n}}(-1)$ is
  \[
    q(t)
    \coloneqq (n+1) \binom{t+n}{n} - \binom{t-1+n}{n} 
    = n \binom{t+n}{n} + \binom{t+n-1}{n-1} \, .
  \]
  In particular, when $r = n + 1$, $s = n$, and $\lambda = (n)$, the Quot
  scheme $\Quot^{\kern0.5pt q} \kern-0.5pt (\mathcal{O}_{\PP^n}^{\! n+1})$ is
  nonempty.
\end{example}

Since the leading coefficient of each summand in the expression for the
rational univariate polynomial $q$ in Proposition~\ref{p:nonempty} is
nonnegative, one can use a greedy algorithm to determine whether $q$ indexes a
nonempty Quot scheme.

\begin{example}
  Assume that $n = 2$ and $r \geqslant 2$, or $n \geqslant 3$ and
  $r \geqslant 1$.  Since
  \[
    q(t) 
    \coloneqq t^2 
    = 2 \binom{t+2}{2} - (3t+2)
    = \binom{t+3-1}{3-1} + \binom{t+3-2}{3-1} - (2t+1)  \, ,
  \] 
  Proposition~\ref{p:nonempty} implies that
  $\Quot^{\kern0.5pt q} \kern-0.5pt(\mathcal{O}_{\PP^n}^{\! r})$ is empty;
  compare with \cite{Bre98}*{Theorem~3.8(i)}.
\end{example}

\begin{example}
  Assume that $n \geqslant 2$ and $r \geqslant 1$.  Since 
  \[
    q(t) 
    \coloneqq \binom{t}{n-1} 
    = \binom{t+n-1}{n-1} 
    - \left( \frac{(n-1)}{(n-2)!} t^{n-1} + \dotsb + 1 \right) \, ,
  \]  
  Proposition~\ref{p:nonempty} implies that
  $\Quot^{\kern0.5pt q} \kern-0.5pt(\mathcal{O}_{\PP^n}^{\! r})$ is empty;
  compare with \cite{Bre98}*{Theorem~3.8(iv)}.
\end{example}

The lexicographic point on the Quot scheme
$\Quot^{\kern0.5pt q} \kern-0.5pt(\mathcal{O}_{\PP^{n}}^{\! r})$ admits a
concrete algebraic description.  To present the lexicographic submodule
$S^{r-s-1} \oplus L(\lambda) \oplus 0^{s} \subseteq S^r$ corresponding to the
integer partition $\lambda$, it is enough to specify the monomial generators
of the saturated ideal $L(\lambda)$ in $S$.

\begin{remark}
  \label{r:gen}
  For any positive integer $j$, let $a_j$ denote the number of parts in the
  integer partition $\lambda$ equal to $j$; the nonnegative integer $a_j$ is
  called the \emph{multiplicity} of the part $j$ in $\lambda$. Using
  superscripts to indicate the number of repetitions of a part leads to the
  alternative notation
  \[
    \lambda 
    = (n^{a_{n}}, \dotsc, j^{\kern0.75pt a_{\kern-0.5pt j}}, \dotsc, 2^{a_{2}}, 1^{a_{1}}) \, .
  \]
  Assuming that $n \geqslant \lambda_1$, the multiplicities
  $a_1, a_2, \dotsc, a_n$ and the length $e$ of the integer partition
  $\lambda$ are related by the equation $e = a_1 + a_2 + \dotsb + a_n$.

  The \emph{lexicographic ideal} in the polynomial ring $S \coloneqq \kk[x_0,x_1, \dotsc, x_n]$ is
  \[
    L(\lambda) \coloneqq \bigl\langle
      x_0^{a_{n\vphantom{-1}}+1}, \,
      x_{0}^{a_{n\vphantom{-1}} \vphantom{+1}} \, x_{1}^{a_{n-1}+1}, \, \dotsc, \, 
      x_{0}^{a_{n\vphantom{-1}}} \, x_{1}^{a_{n-1}} \!\dotsb x_{n-3}^{a_{3\vphantom{-1}}} \, 
      x_{n-2}^{a_{2\vphantom{-1}}+1}, \, 
      x_{0}^{a_{n\vphantom{-1}}\vphantom{+1}} \, x_{1}^{a_{n-1}} \!\dotsb x_{n-2}^{a_{2\vphantom{-1}}} \, 
      x_{n-1}^{a_{1\vphantom{-1}}} 
    \bigr\rangle \, ;
  \]
  compare with \cite{RS97}*{Notation~1.2} and
  \cite{SS23}*{Proposition~2.5}. These $n$ monomials do not always form a
  minimal generating set for the lexicographic ideal.  When $a_1 > 0$, the
  given monomials are incomparable (in the distributive lattice ordered by
  divisibility) and, thereby, form a minimal set of generators.  When
  $a_1 = a_2 = \dotsb = a_{k-1} = 0$ and $a_k \neq 0$ for some positive
  integer $k$, the penultimate $k-1$ monomials are divisible by the final one,
  while the remaining monomials are not.  Hence, setting $k$ to be the
  smallest part in the integer partition $\lambda$, the ideal $L(\lambda)$ has
  $n - (k-1) = n-k+1$ minimal generators.  The projective dimension of the
  $\ZZ$-graded $S$\nobreakdash-module $S^1 \mathbin{\!\!/\!}  L(\lambda)$ is
  also equal to $n-k+1$ because its minimal free resolution is given
  explicitly by the Eliahou--Kervaire resolution; see
  \cite{PS08}*{Theorem~2.3}.  Alternatively, one may compute the depth of
  ideal $L(\lambda)$ from its unique irredundant irreducible
  decomposition~\cite{SS23}*{Proposition~2.5} and use the Auslander--Buchsbaum
  formula to find its projective dimension.
\end{remark}

Relatively simple conditions on the integer partition guarantee that the
lexicographic submodule is the only saturated Borel-fixed submodule; compare
with \cite{Sta20}*{Theorem~1.1}.

\begin{corollary} 
  \label{c:!b}
  Let $r$ and $s$ be nonnegative integers satisfying $r > s$. For every
  integer partition
  $\lambda \coloneqq (\lambda_{1}, \lambda_{2}, \dotsc, \lambda_{e})$ such
  that either $\lambda_{e} \geqslant 2$, $\lambda = (1)$, or $\lambda = ()$,
  there exists a unique saturated Borel-fixed submodule $K \subseteq S^r$ such
  that the Hilbert polynomial of $S^r/K$ is
  $q(t) = s \binom{t+n}{n} + p_{\lambda}(t) \in \QQ[t]$.  Moreover, when
  $r \geqslant s+2$, the converse holds.
\end{corollary}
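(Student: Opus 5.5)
Every saturated Borel-fixed submodule with Hilbert polynomial $q$ has the form $K = I_{(1)}\bm{e}_1 \oplus \dotsb \oplus I_{(r)}\bm{e}_r$, where each $I_{(j)}$ is a saturated Borel-fixed ideal and $I_{(1)} \supseteq \dotsb \supseteq I_{(r)}$ by (B1)--(B3). The plan is to split the Hilbert polynomial along these ideals and then use Lemma~\ref{l:sum} to control the splitting.

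As in the proof of Proposition~\ref{p:nonempty}, each summand $S/I_{(j)}$ contributes $\binom{t+n}{n}$ when $I_{(j)} = 0$, contributes $0$ when $I_{(j)} = S$ (the unit ideal is the saturation of any irrelevant-primary ideal), and otherwise contributes $p_{\lambda(j)}$ for a nonempty partition $\lambda(j)$ with $\lambda(j)_1 \leqslant n$. The degree-$n$ leading coefficient of a polynomial $p_\mu$ with $\mu_1 \leqslant n$ is nonzero only when $\mu$ has parts of size $n+1$, which is excluded. Hence exactly $s$ of the ideals are zero, and by the chain condition these are $I_{(r-s+1)}, \dotsc, I_{(r)}$. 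Moreover $p_\lambda = \sum p_{\lambda(j)}$, the sum running over the indices $j$ with $0 \neq I_{(j)} \neq S$, and by the chain these indices form an interval just before $r-s+1$.

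\textbf{Uniqueness.} If $\lambda = ()$, then no proper nonzero ideal occurs, so $K = S^{r-s} \oplus 0^s$, which is unique. If $\lambda_e \geqslant 2$, Lemma~\ref{l:sum} (applied iteratively) forbids a nontrivial decomposition, so exactly one ideal $I_{(r-s)}$ is proper and nonzero. It is a saturated Borel-fixed ideal with Hilbert polynomial $p_\lambda$. I would then invoke the known fact (\cite{SS23}*{Theorem~3.2} or \cite{Sta20}) that for $\lambda_e \geqslant 2$ the lexicographic ideal $L(\lambda)$ is the unique such ideal; the case $\lambda = (1)$ is clear, since a saturated Borel-fixed ideal of a point must be $\langle x_0, \dotsc, x_{n-1}\rangle$. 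Existence is supplied by the lexicographic submodule of Proposition~\ref{p:nonempty}. This reliance on the Hilbert-scheme uniqueness result for a single ideal is the main obstacle; if it is not citable, one must argue that a Borel ideal whose Hilbert polynomial has no part equal to $1$ is forced to be lex, by induction on $n$ via a general hyperplane section.

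\textbf{Converse when $r \geqslant s+2$.} Suppose $\lambda$ is nonempty, has $\lambda_e = 1$, and $\lambda \neq (1)$. I would exhibit two distinct saturated Borel-fixed submodules. The first is the lexicographic one. For the second, write $\lambda = (\mu, 1)$, where $\mu$ is nonempty. Since $p_\lambda = p_\mu + p_{(1)}$ by the algorithm in Lemma~\ref{l:sum} (removing the trailing $1$ gives exactly this), and since $r-s \geqslant 2$ leaves room for two proper ideals, the second candidate is
\[
  S^{r-s-2} \oplus \langle x_0, \dotsc, x_{n-1}\rangle \oplus L(\mu) \oplus 0^s .
\]
The chain condition requires $\langle x_0,\dotsc,x_{n-1}\rangle \supseteq L(\mu)$, which holds because $L(\mu)$ is saturated and contained in the ideal of the lex point. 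This submodule differs from the lexicographic one because it has two proper nonzero summands, and its Hilbert polynomial is $s\binom{t+n}{n}+p_\mu+1 = q$. Therefore uniqueness fails, which gives the converse.
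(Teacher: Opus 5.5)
Your proposal is correct and follows essentially the same route as the paper: you split $K$ via (B1)--(B3), use Lemma~\ref{l:sum} to rule out more than one proper nonzero summand, cite \cite{Sta20}*{Theorem~1.1} or \cite{SS23}*{Theorem~3.2} for the uniqueness of $L(\lambda)$, and exhibit $S^{r-s-2} \oplus L(1) \oplus L(\mu) \oplus 0^{s}$ for the converse. Your bookkeeping is a bit more explicit than the paper's, namely counting the zero summands via the $t^{n}$-coefficient and checking the chain condition $L(1) \supseteq L(\mu)$. One small omission: for $\lambda = (1)$ you should also say why there is no nontrivial splitting, which is immediate because $1$ is not a sum of two eventually positive polynomials $p_{\mu}$ and $p_{\nu}$ (it also follows from the second part of Lemma~\ref{l:sum}).
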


\begin{proof} 
  Proposition~\ref{p:nonempty} shows that the lexicographic submodule
  $S^{r-s-1} \oplus L(\lambda) \oplus 0^{s} \subseteq S^r$ has Hilbert
  polynomial $q$. When $r-s \geqslant 2$, there is another Borel-fixed
  submodule if $p_{\lambda} = p_{\mu} + p_{\nu}$ for some nonempty integer
  partitions $\mu$ and $\nu$: the submodule is
  $S^{r-s-2} \oplus L(\nu) \oplus L(\mu) \oplus 0^{s} \subseteq S^r$. Since
  $\lambda_{e} \geqslant 2$, $\lambda = (1)$, or $\lambda = ()$,
  Lemma~\ref{l:sum} precludes the existence of such a nontrivial sum.
  Conversely, for any integer partition
  $\lambda = (\lambda_1, \lambda_2, \dotsc, \lambda_{e-1},1)$ having at least
  one part of size $1$, we always have $p_{\lambda} = p_{\mu} + p_{\nu}$,
  where $\mu = (\lambda_{1}, \lambda_{2}, \dotsc, \lambda_{e-1})$ and
  $\nu = (1)$.

  Given the conditions on $\lambda$, it follows that any Borel-fixed submodule
  with Hilbert polynomial $q$ is a direct sum of monomial ideals in which at
  most one summand is a nonzero proper ideal. However, the hypotheses on
  $\lambda$ also ensure that the lexicographic ideal $L(\lambda)$ is the
  unique saturated Borel-fixed ideal with Hilbert polynomial $p_{\lambda}$;
  see \cite{Sta20}*{Theorem~1.1} or \cite{SS23}*{Theorem~3.2}.
\end{proof}

\section{Smoothness Of The Lexicographic Point}
\label{s:sm}

\noindent 
In this section, we demonstrate that the lexicographic point on
$\Quot^{\kern0.5pt q} \kern-0.5pt (\mathcal{O}_{\PP^n}^{\! r})$ is always
smooth. Throughout, the base scheme is $\Spec(\kk)$ and, for any integer
partition $\lambda$, we assume that $n \geqslant \lambda_1$.  The homogeneous
coordinate ring of the $n$-dimensional projective space $\PP^n$ is still
denoted by $S \coloneqq \kk[x_0, x_1, \dotsc, x_n]$. For any integer $k$ and
any finitely generated $\ZZ$-graded $S$-module $M$, the graded piece $M_k$ is
the $\kk$-vector space spanned by all elements in $M$ of degree $k$.  The
$\ZZ$-graded $S$-module $M$ determines the coherent sheaf $\widetilde{M}$ on
$\PP^n$. For any integer $i$, set
\[
  h^i(M)
  \coloneqq \dim_{\kk} H^i(\PP^n \kern-1.0pt, \kern2.0pt \widetilde{M}) \, .
\]  

For convenience, we recall the following observation.

\begin{lemma}[\cite{RS97}*{Lemma~3.1}]
  \label{l:h^i}
  Let $M$ be a finitely generated $\ZZ$-graded $S$-module. For any
  nonnegative integer $i$, we have
  \[
    h^i({M}) 
    = \begin{cases} 
      \dim_\kk M_0 + \dim_\kk \Ext^{n}_S(M, S)_{-n-1} + \dim_\kk \Ext^{n+1}_S(M, S)_{-n-1} 
      & \text{if $i = 0$,} \\[-2pt]
      \dim_\kk \Ext^{n-i}_S(M, S)_{-n-1} 
      & \text{if $i > 0$.} \\[-1pt]
    \end{cases}
  \]  
\end{lemma}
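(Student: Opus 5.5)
The plan is to compare sheaf cohomology with local cohomology at the irrelevant ideal $\mathfrak{m} \coloneqq \langle x_0, x_1, \dotsc, x_n \rangle$, and then convert local cohomology into $\Ext$ modules by graded local duality.

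\textbf{Step 1.} Use the standard four-term exact sequence of graded $S$-modules
\[
  0 \longrightarrow H^0_{\mathfrak{m}}(M) \longrightarrow M \longrightarrow \bigoplus_{t \in \ZZ} H^0\bigl(\PP^n, \widetilde{M}(t)\bigr) \longrightarrow H^1_{\mathfrak{m}}(M) \longrightarrow 0 \, ,
\]
together with the isomorphisms $\bigoplus_{t} H^i\bigl(\PP^n, \widetilde{M}(t)\bigr) \cong H^{i+1}_{\mathfrak{m}}(M)$ for $i \geqslant 1$. Taking degree-$0$ parts gives
\[
  h^i(M) = \dim_\kk H^{i+1}_{\mathfrak{m}}(M)_0 \quad \text{for } i > 0,
\]
and, for $i = 0$, the alternating sum
\[
  h^0(M) = \dim_\kk M_0 - \dim_\kk H^0_{\mathfrak{m}}(M)_0 + \dim_\kk H^1_{\mathfrak{m}}(M)_0 \, .
\]

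\textbf{Step 2.} Apply graded local duality over $S$, whose canonical module is $S(-n-1)$. For every $j$ this yields
\[
  H^{j}_{\mathfrak{m}}(M)_0 \cong \bigl(\Ext^{n+1-j}_S(M,S)_{-n-1}\bigr)^{\vee} \, .
\]
With $j = i+1$ this gives the $i>0$ case immediately. With $j = 0$ and $j = 1$ it gives the $\Ext^{n+1}$ and $\Ext^{n}$ terms in the $i = 0$ case.

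\textbf{Main obstacle: the sign of the $\Ext^{n+1}$ term.} This bookkeeping produces the $\Ext^{n+1}$ contribution with a minus sign, not the plus sign in the statement as printed. The case $M = \kk$ shows the plus sign is genuinely wrong: there $h^0 = 0$ while $\dim_\kk M_0 = 1$ and $\Ext^{n+1}_S(\kk,S)_{-n-1} \cong \kk$. So I cannot prove the displayed formula literally.

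What the argument does give is the following. The formula as printed holds exactly when $\Ext^{n+1}_S(M,S)_{-n-1} = 0$, equivalently when $H^0_{\mathfrak{m}}(M)_0 = 0$. This covers every module with positive depth, in particular every saturated quotient $S^r/K$ of the kind used in the rest of the paper, since then $H^0_{\mathfrak{m}}(M) = 0$. In the write-up I would therefore prove the signed formula above, record that hypothesis explicitly, and note that the application to saturated Borel-fixed quotients is unaffected.
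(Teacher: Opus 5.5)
Your proof is correct and is the standard argument. The paper does not prove this lemma itself; it simply quotes \cite{RS97}*{Lemma~3.1}, so there is no competing route to compare against.

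The degree-$0$ strand of the four-term sequence, combined with graded local duality ($\omega_S = S(-n-1)$, so $H^j_{\mathfrak{m}}(M)_0 \cong (\Ext^{n+1-j}_S(M,S)_{-n-1})^{\vee}$), gives the $i>0$ case exactly as stated. For $i=0$ it gives
\[
  h^0(M) = \dim_\kk M_0 + \dim_\kk \Ext^{n}_S(M,S)_{-n-1} - \dim_\kk \Ext^{n+1}_S(M,S)_{-n-1} \, .
\]
Your test case $M = \kk$ shows that the plus sign in the printed statement is genuinely wrong. There $\widetilde{M} = 0$, yet $\Ext^{n+1}_S(\kk,S) \cong \kk(n+1)$ contributes $1$ in degree $-n-1$.

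You are also right that the error does not propagate. The paper invokes the lemma in the base case of Lemma~\ref{l:h^0}, in Lemma~\ref{l:dual}, in Lemma~\ref{l:presM}, and in Proposition~\ref{p:n^e1}. Each time the module has projective dimension at most $n$:
\begin{itemize}
\item $S^1/L(\lambda)$ with generators avoiding $x_{n-1}, x_n$;
\item the free module $S(a_n)$;
\item a module of projective dimension $1$;
\item a twist of $S^1/L(1)$.
\end{itemize}
So $\Ext^{n+1}_S(M,S) = 0$ in every case, and the two signs agree.

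One small adjustment to your write-up. Several of these modules are $\Hom$-modules or free modules, not saturated quotients $S^r/K$. The hypothesis to record is the one you already isolated: $\Ext^{n+1}_S(M,S)_{-n-1} = 0$, for instance when $M$ has positive depth. Saturatedness of $K$ is not the relevant condition.
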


Using this formula, we can compute the number of linearly independent global
sections of the structure sheaf for the closed subscheme of $\PP^n$
corresponding to a lexicographic ideal $L(\lambda)$ in $S$.  Curiously, this
dimension depends on the multiplicity $a_1$ of the part $1$ in the integer
partition $\lambda$.

\begin{lemma}
  \label{l:h^0}
  For any integer partition
  $\lambda \coloneqq (n^{a_n}, \dotsc, 2^{a_{2}}, 1^{a_{1}})$ of length
  $e \coloneqq a_1 + a_2 + \dotsb + a_n > 0$, we have
  \[ 
    h^0 \kern-0.5pt \bigl( S^1 \mathbin{\!\!/\!} L(\lambda) \kern-1.5pt \bigr) 
    = \begin{cases} 
      a_{1}   & \text{if $e = a_{1}$,} \\[-2pt]
      a_{1}+1 & \text{if $e > a_{1}$.} \\[-1pt]
    \end{cases}
  \]
\end{lemma}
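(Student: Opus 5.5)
The plan is to split off the length-$a_1$ zero-dimensional part of the lexicographic scheme by a colon exact sequence, and to handle the remaining part with the depth computation in Remark~\ref{r:gen}.

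\textbf{Setup.} Write
\[
  m \coloneqq x_0^{a_n} x_1^{a_{n-1}} \dotsb x_{n-2}^{a_2},
\]
so the last generator listed for $L(\lambda)$ in Remark~\ref{r:gen} is $m \, x_{n-1}^{a_1}$. Let $\lambda^{\circ} \coloneqq (n^{a_n}, \dotsc, 2^{a_2})$ be $\lambda$ with its parts of size $1$ deleted. Set $J \coloneqq L(\lambda) + \langle m \rangle$. Comparing with the explicit generators in Remark~\ref{r:gen}, I expect $J = L(\lambda^{\circ})$ when $\lambda^{\circ} \neq ()$. When $e = a_1$ we have $m = 1$, so $J = S$.

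\textbf{The colon computation.} Next I compute $L(\lambda) : m$. Every generator of $L(\lambda)$ other than the last one divided by $\gcd$ with $m$ leaves a single variable $x_i$ with $i \leqslant n-2$. Since the ideals are monomial, I expect
\[
  L(\lambda) : m = \langle x_0, \dotsc, x_{n-2}, x_{n-1}^{a_1} \rangle .
\]
This is checked directly on monomials.

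\textbf{The exact sequence.} This gives the graded short exact sequence
\[
  0 \to \bigl(S/(L(\lambda):m)\bigr)(-\deg m) \xrightarrow{\;\cdot m\;} S/L(\lambda) \to S/J \to 0 .
\]
Sheafifying yields $0 \to \mathcal{O}_Z \to \mathcal{O}_X \to \mathcal{O}_Y \to 0$. Here $X$ is the lexicographic scheme, $Z$ is a zero-dimensional scheme of length $a_1$ supported at $[0:\dotsb:0:1:0]$, and $Y$ is the subscheme defined by $J$. The twist is harmless on a zero-dimensional scheme, and $H^1(\mathcal{O}_Z) = 0$. Taking cohomology therefore gives
\[
  h^0\bigl(S^1/L(\lambda)\bigr) = a_1 + h^0(S^1/J) .
\]
When $e = a_1$, $Y$ is empty and the result is $a_1$, as claimed.

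\textbf{The case $e > a_1$.} It remains to show $h^0\bigl(S^1/L(\lambda^{\circ})\bigr) = 1$ when every part of $\lambda^{\circ}$ is at least $2$. I would apply Lemma~\ref{l:h^i} with $M = S/L(\lambda^{\circ})$, which gives
\[
  h^0 = \dim_{\kk} M_0 + \dim_{\kk} \Ext^{n}_S(M,S)_{-n-1} + \dim_{\kk} \Ext^{n+1}_S(M,S)_{-n-1} .
\]
Here $\dim_{\kk} M_0 = 1$. By Remark~\ref{r:gen}, the projective dimension of $M$ is $n-k+1$, where $k \geqslant 2$ is the smallest part of $\lambda^{\circ}$. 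Hence $\Ext^{n}_S(M,S) = \Ext^{n+1}_S(M,S) = 0$, because $n \geqslant n-k+2 > \operatorname{pd} M$. So $h^0 = 1$, giving $a_1 + 1$ in total.

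\textbf{Main obstacle.} The main risk is bookkeeping: verifying the colon ideal and the identification $J = L(\lambda^{\circ})$ from the generator list. This needs care in the degenerate cases where some multiplicities vanish and the listed monomials are not minimal generators. Both checks reduce to divisibility of monomials.
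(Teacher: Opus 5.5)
Your argument is correct, but it is organized differently from the paper's. The paper treats $e = a_1$ separately: the Hilbert polynomial is constant, so the scheme is zero-dimensional of length $a_1$. For $e > a_1$ it inducts on $a_1$, passing from $\lambda$ to $\mu = (n^{a_n}, \dotsc, 2^{a_2}, 1^{a_1-1})$ via $L(\lambda) \subset L(\mu)$ and $L(\mu)/L(\lambda) \cong S^1(-e+1)/\langle x_0, \dotsc, x_{n-1} \rangle$. That quotient is a twisted copy of the coordinate ring of a single reduced point, so each step adds exactly $1$. The base case $a_1 = 0$ is the same projective-dimension argument through Lemma~\ref{l:h^i} that you use for $\lambda^{\circ}$.

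You instead remove all parts of size $1$ at once with a single colon sequence for $m = x_0^{a_n} \dotsb x_{n-2}^{a_2}$. Both of your computations are right:
\begin{itemize}
\item $L(\lambda) : m = \langle x_0, \dotsc, x_{n-2}, x_{n-1}^{a_1} \rangle$, since each listed generator other than the last leaves a single variable $x_i$ with $i \leqslant n-2$ after dividing by its gcd with $m$.
\item $L(\lambda) + \langle m \rangle = L(\lambda^{\circ})$, since the other listed generators do not depend on $a_1$ and $m$ divides $m \, x_{n-1}^{a_1}$.
\end{itemize}
The colon formula for monomial ideals holds for any monomial generating set. So the degenerate cases you flagged as the main obstacle cause no trouble: minimality of the generators in Remark~\ref{r:gen} never enters.

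Your route handles both cases uniformly: when $e = a_1$ you get $m = 1$ and $J = S$. It also gives a clear geometric picture. The lexicographic scheme consists of the lexicographic scheme for $\lambda^{\circ}$ together with a residual curvilinear scheme of length $a_1$ at one point lying on it. The paper's route only ever needs the trivial fact that one reduced point has $h^0 = 1$ and $h^1 = 0$, at the cost of an induction.

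One small slip: the zero locus of $\langle x_0, \dotsc, x_{n-2}, x_{n-1}^{a_1} \rangle$ is the point $[0 : \dotsb : 0 : 1]$, where only $x_n$ is nonzero, not $[0 : \dotsb : 0 : 1 : 0]$. This does not affect the length count or the rest of the argument.
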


\begin{proof}
  Suppose that $e = a_1$.  Since $\lambda = (1^{a_1})$, the associated Hilbert
  polynomial is the constant $p_{\lambda}(t) = a_1$, so the closed subscheme
  of $\PP^n$ corresponding to the lexicographic ideal $L(\lambda)$ in $S$ is
  $0$-dimensional and affine.  We deduce that
  $h^0 \kern-0.5pt \bigl( S^1 \mathbin{\!\!/\!} L(\lambda) \kern-1.5pt \bigr)
  = a_1$.

  Suppose that $e > a_1$, which implies that
  $n \geqslant \lambda_1 \geqslant \lambda_{e-a_1} > 1$.  We proceed by
  induction on $a_1$.  When $a_1 = 0$, Remark~\ref{r:gen} shows that no
  generator of the monomial ideal $L(\lambda)$ involves the variables
  $x_{n-1}$ or $x_{n}$ in $S$. Thus, the projective dimension of the quotient
  $S^1 \mathbin{\!\!/\!} L(\lambda)$ is less than $n$ and Lemma~\ref{l:h^i}
  shows that
  $h^0 \kern-0.5pt \bigl( S^1 \mathbin{\!\!/\!} L(\lambda) \kern-1.5pt \bigr)
  = \dim_{\kk} \smash{\kern-0.5pt \bigl( S^1 \mathbin{\!\!/\!} L(\lambda)
    \kern-1.5pt \bigr)}_{0}^{} = 1$, completing the base case.  Now, assume
  that $a_1 > 0$. Consider the subpartition
  $\mu \coloneqq (n^{a_n}, \dotsc, 2^{a_{2}}, 1^{a_{1}-1})$ having length
  $e-1$. From the monomial generators in Remark~\ref{r:gen}, we obtain the
  proper inclusion $L(\lambda) \subset L(\mu)$ and the isomorphism
  $P \coloneqq L(\mu) \mathbin{\!\!/\!} L(\lambda) \cong S^1(-e+1)
  \mathbin{\!\!/\!}  \langle x_0, x_1, \dotsc, x_{n-1} \rangle$. In
  particular, the coherent sheaf $\widetilde{P}$ is (up to a twist) the
  structure sheaf of a single point on $\PP^n$, so $h^0(P) = 1$. Hence, the
  exact sequence
  \[  
    0 \longrightarrow
    P \longrightarrow
    S^1 \mathbin{\!\!/\!} L(\lambda) \longrightarrow
    S^1 \mathbin{\!\!/\!} L(\mu) \longrightarrow
    0 \, ,
  \]
  of $\ZZ$-graded $S$-modules yields the equation
  $h^0 \kern-0.5pt \bigl( S^1 \mathbin{\!\!/\!} L(\lambda) \kern-1.5pt \bigr)
  = h^0 \kern-0.5pt \bigl( S^1 \mathbin{\!\!/\!} L(\mu) \kern-1.5pt \bigr) +
  1$.  From the induction hypothesis, we conclude that
  $h^0 \kern-0.5pt \bigl( S^1 \mathbin{\!\!/\!} L(\lambda) \kern-1.5pt \bigr)
  = a_1+1$.
\end{proof}

Dualizing the lexicographic ideal, we see that the number of linearly
independent global sections depends on the multiplicity $a_n$ of the part $n$
in the integer partition $\lambda$.

\begin{lemma}
  \label{l:dual} 
  For any integer partition
  $\lambda \coloneqq (n^{a_n}, \dotsc, 2^{a_{2}}, 1^{a_{1}})$, we have
  \[
    h^0 \kern-0.5pt \Bigl( \Hom_S \bigl( L(\lambda), S \bigr) \kern-2.5pt
    \Bigr) = \binom{n+a_n}{n} \, .
  \]
\end{lemma}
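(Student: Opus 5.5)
The plan is to factor the lexicographic ideal as a power of $x_0$ times an ideal of height at least $2$, so that its dual module is a twist of $S$. By Remark~\ref{r:gen}, every listed generator of $L(\lambda)$ is divisible by $x_0^{a_n}$, so $L(\lambda) = x_0^{a_n} J$, where
\[
  J \coloneqq \bigl\langle x_0, \, x_1^{a_{n-1}+1}, \, x_1^{a_{n-1}} x_2^{a_{n-2}+1}, \, \dotsc, \, x_1^{a_{n-1}} \dotsb x_{n-2}^{a_2} x_{n-1}^{a_1} \bigr\rangle \, .
\]
Since $x_0$ is a nonzerodivisor on $S$, multiplication by $x_0^{a_n}$ gives an isomorphism $J(-a_n) \cong L(\lambda)$ of graded $S$-modules. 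Hence $\Hom_S\bigl(L(\lambda),S\bigr) \cong \Hom_S(J,S)(a_n)$.

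Next I will show that $\Hom_S(J,S) = S$, meaning that every homomorphism $J \to S$ is multiplication by a unique element of $S$. There are two cases.
\begin{compactitem}[$\bullet$]
\item If $J = S$, the claim is trivial. This happens, for instance, when $\lambda = (n^{a_n})$: the final generator of $J$ is then $1$. It also covers the empty partition.
\item Otherwise $J$ contains both $x_0$ and a pure power of $x_1$, namely its second generator. (When $n \geqslant 2$ and $a_1 = 0$ with this being the last generator, it is still a power of $x_1$ possibly times units.) Hence $\operatorname{ht} J \geqslant 2$.
\end{compactitem}
For an ideal of height at least $2$ in the UFD $S$, the standard argument gives $\Hom_S(J,S) = S$. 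Take $\varphi \colon J \to S$ and two coprime elements $f, g \in J$, for example $x_0$ and a power of $x_1$. The relation $g\varphi(f) = f\varphi(g)$ forces $f \mid \varphi(f)$. Set $c = \varphi(f)/f$. Then for any $h \in J$, the relation $f\varphi(h) = h\varphi(f)$ gives $\varphi(h) = ch$.

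The case $n = 1$ fits the same pattern. There $\lambda = (1^{a_1})$, so $a_n = a_1$, $L(\lambda) = \langle x_0^{a_1} \rangle$, and $J = S$.

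Putting this together, $\Hom_S\bigl(L(\lambda),S\bigr) \cong S(a_n)$, so the associated sheaf is $\mathcal{O}_{\PP^n}(a_n)$. Therefore $h^0 = \dim_\kk S_{a_n} = \binom{n+a_n}{n}$.

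The main point needing care is the bookkeeping in the generators of $J$: one must confirm that after removing $x_0^{a_n}$, the ideal either is the unit ideal or genuinely contains two coprime elements, in all edge cases. These edge cases are $a_1 = 0$, $n = 1, 2$, and $\lambda$ having only parts of size $n$. Verifying this directly from the explicit list in Remark~\ref{r:gen} should suffice.
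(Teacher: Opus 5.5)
Your proof is correct and takes essentially the same route as the paper: both show $\Hom_S\bigl(L(\lambda),S\bigr) \cong S(a_n)$ because the generators of $L(\lambda)$ in Remark~\ref{r:gen} have greatest common divisor $x_0^{a_n}$, and then read off $h^0 = \dim_\kk S_{a_n} = \binom{n+a_n}{n}$. The only difference is in justifying $\Hom_S\bigl(L(\lambda),S\bigr) \cong S(a_n)$: the paper cites the description of the dual of a monomial ideal as the intersection of the principal fractional ideals generated by the inverses of its generators, whereas you factor out $x_0^{a_n}$ and prove $\Hom_S(J,S) = S$ directly from two coprime elements of $J$ in the UFD $S$, which makes this step self-contained.
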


\begin{proof} 
  Since the dual of a monomial ideal is equal to the intersection of the
  principal fractional ideals generated by the inverses of its minimal
  generators, Remark~\ref{r:gen} implies that
  \[
    \Hom_S \bigl( L(\lambda), S \bigr) 
    = \left( x_{0}^{-a_n-1} \cdot S \right) 
    \cap \left( x_{0}^{-a_n} x_{1}^{-a_{n-1}-1} \cdot S \right) \cap \dotsb
    \cap \left( x_{0}^{-a_n} x_{1}^{-a_{n-1}} \dotsb x_{n-2}^{-a_2} x_{n-1}^{a_{1}} \cdot S \right)
    \cong S(a_{n}) \, . 
  \]
  Thus, Lemma~\ref{l:h^i} gives
  $\smash{h^0 \kern-0.5pt \Bigl( \kern-0.5pt \Hom_S \bigl( L(\lambda), S
    \bigr) \kern-3.0pt \Bigr)} = h^0 \bigl( S(a_{n}) \kern-1.0pt \bigr) =
  \dim_{\kk} S_{a_{n}} = \binom{n+a_{n}}{n}$.
\end{proof}

Harnessing these lemmas, we determine the dimension of the tangent space to
$\Quot^{\kern0.5pt q}\kern-0.5pt (\mathcal{O}_{\PP^n}^{\! r})$ at the
lexicographic point. For our purposes, it is enough to compare this dimension
with that of the tangent space to the Hilbert scheme
$\Hilb^{\kern0.5pt p_\lambda} \kern-0.5pt (\PP^n)$ at the lexicographic
point. \cite{RS97}*{Theorem~1.4} proves that the lexicographic point in
$\Hilb^{\kern0.5pt p_\lambda} \kern-0.5pt (\PP^n)$ is smooth, so it lies on a
unique irreducible component, called the \emph{lexicographic component}.
Moreover, \cite{RS97}*{Theorem~4.1} provides a formula for the dimension of
this component; also see \cite{SS23}*{Remark~2.7}.

\begin{proposition}
  \label{p:dim}
  Fix a nonnegative integer $s$ and an integer partition
  $\lambda \coloneqq (\lambda_{1}, \lambda_{2}, \dotsc, \lambda_{e})$ such
  that $r > s$, $n \geqslant \lambda_{1}$, and
  $q(t) \coloneqq s \binom{t+n}{n} + p_{\lambda}(t) = s \binom{t+n}{n} +
  \sum_{i=1}^{e} \binom{t+\lambda_i-i}{\lambda_i-1}$.  Let $N_{\lambda}$
  denote the dimension of the lexicographic component of the Hilbert scheme
  $\Hilb^{\kern0.5pt p_\lambda} \kern-0.5pt (\PP^n)$. When
  $\lambda = (n^{a_n}, \dotsc, 2^{a_{2}}, 1^{a_{1}})$, the dimension of the
  tangent space to the Quot scheme
  $\Quot^{\kern0.5pt q}\kern-0.5pt (\mathcal{O}_{\PP^n}^{\! r})$ at the
  lexicographic point is
  \[  
    \begin{cases}
      N_{\lambda} + s \binom{n+a_n}{n} + (r-s-1)(s+a_1) 
      & \text{if $e = a_1$,} \\[2pt]
      N_{\lambda} + s \binom{n+a_n}{n} + (r-s-1)(s+a_1+1) 
      & \text{if $e > a_1$.} \\[-1pt]
    \end{cases}
  \]
\end{proposition}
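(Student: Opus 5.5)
The tangent space to $\Quot^{\kern0.5pt q}\kern-0.5pt(\mathcal{O}_{\PP^n}^{\! r})$ at the point $[\mathcal{O}^{r}_{\PP^n} \to \mathcal{F}]$ with kernel $\mathcal{K}$ is $\Hom(\mathcal{K}, \mathcal{F})$. At the lexicographic point the kernel is $\mathcal{K} = \mathcal{O}^{\,r-s-1} \oplus \mathcal{I}$, where $\mathcal{I} \coloneqq \widetilde{L(\lambda)}$ is the ideal sheaf of the lexicographic subscheme $Z \subseteq \PP^n$. The quotient is $\mathcal{F} = \mathcal{O}_{Z} \oplus \mathcal{O}^{s}$. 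The plan is to split $\Hom(\mathcal{K},\mathcal{F})$ into four blocks by bilinearity and identify each block using Lemmas~\ref{l:h^0} and~\ref{l:dual}. Everything is computed on sheaves, so we work with saturated modules and use $h^0$ as in Lemma~\ref{l:h^i}.

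\begin{compactenum}[\phantom{W} (1)]
\item The block $\Hom(\mathcal{I},\mathcal{O}_Z)$ is the tangent space to $\Hilb^{\kern0.5pt p_\lambda}\kern-0.5pt(\PP^n)$ at the lexicographic point. By \cite{RS97}*{Theorem~1.4} that point is smooth, so this block has dimension exactly $N_{\lambda}$.
\item The block $\Hom(\mathcal{I},\mathcal{O}^{s}) = \Hom(\mathcal{I},\mathcal{O})^{s}$ equals $H^0$ of the sheafified dual $\sHom(\mathcal{I},\mathcal{O})$, taken $s$ times. Since $\mathcal{I}$ is torsion-free and $\sHom(\mathcal{I},\mathcal{O})$ is reflexive, global sections are computed by $\Hom_S(L(\lambda),S)_0$. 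In the proof of Lemma~\ref{l:dual} we saw that $\Hom_S(L(\lambda),S) \cong S(a_n)$, so this block contributes $s\binom{n+a_n}{n}$.
\item The block $\Hom(\mathcal{O}^{\,r-s-1},\mathcal{O}_Z \oplus \mathcal{O}^{s})$ has dimension $(r-s-1)\bigl(h^0(\mathcal{O}_Z)+s\bigr)$. Lemma~\ref{l:h^0} gives $h^0(\mathcal{O}_Z)$ as $a_1$ or $a_1+1$ according to whether $e = a_1$ or $e > a_1$, which produces the two displayed cases.
\end{compactenum}

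Adding the three blocks yields the displayed formula. For this to be the whole answer, the fourth block must vanish.

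The main obstacle is step (2), together with making sure the block decomposition is legitimate. First, we must check that $\Hom(\mathcal{I},\mathcal{O})$ has no contributions beyond $H^0$ of the reflexive hull. Here the $\mathcal{O}_{\PP^n}$-module homomorphisms are determined on the open set where $\mathcal{I}$ is locally free, and since $\mathcal{O}$ is $S_2$ they extend from there, which makes the identification immediate. Second, in step (1) we must confirm that the tangent space of the Hilbert scheme at the lex point equals $\dim$ of the lexicographic component, which is exactly the smoothness statement in \cite{RS97}. Third, the fourth block $\Hom(\mathcal{O}^{s},\dots)$ does not appear, because the quotient summand $\mathcal{O}^{s}$ is not part of the kernel; this only requires writing the kernel and quotient correctly. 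Finally, in the case $\lambda=()$ (so $e = 0 = a_1$) we take $\mathcal{I}=\mathcal{O}$, $\mathcal{O}_Z=0$ and $N_{\lambda}=0$, and check that the formula still holds: the kernel is then $\mathcal{O}^{\,r-s}$, which gives $(r-s)s$. This matches the formula provided we interpret $a_n=0$, since then $s\binom{n}{n}+(r-s-1)s=(r-s)s$.
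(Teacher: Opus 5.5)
Your proposal is correct and is essentially the paper's proof. The paper splits the tangent space $\Hom(\mathcal{K},\mathcal{F})$ at the lexicographic point into the same pieces, namely $(S^1/L(\lambda) \oplus S^{s})^{r-s-1}$, $\Hom_S(L(\lambda),S)^{s}$, and $\Hom_S(L(\lambda),S^1/L(\lambda))$, and it evaluates them with Lemma~\ref{l:h^0}, Lemma~\ref{l:dual}, and the smoothness of the lexicographic point from \cite{RS97}. Your ``fourth block'' is only a bookkeeping slip, since step~(3) already combines the two blocks with source $\mathcal{O}^{r-s-1}$. The $S_2$ extension argument is unnecessary, because $\Hom(\mathcal{I},\mathcal{O}) = H^0\bigl(\sHom(\mathcal{I},\mathcal{O})\bigr) = H^0\bigl(\mathcal{O}_{\PP^n}(a_n)\bigr)$ directly. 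Your separate check of $\lambda = ()$ is a welcome addition, since Lemma~\ref{l:h^0} assumes $e > 0$.
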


\begin{proof} 
  The lexicographic point in the Quot scheme
  $\Quot^{\kern0.5pt q} \kern-0.5pt (\mathcal{O}_{\PP^n}^{\! r})$ corresponds
  to the submodule $S^{r-s-1} \oplus L(\lambda) \oplus 0^{s} \subseteq S^{r}$,
  where $L(\lambda)$ is the lexicographic ideal associated to the integer
  partition $\lambda$. Hence, the tangent space to this Quot scheme at this
  point is the space of the global sections of the sheaf associated to the
  $\ZZ$-graded $S$-module
  $\Hom_{S} \kern-0.5pt \bigl( S^{r-s-1} \oplus L(\lambda), \, S^1
  \mathbin{\!\!/\!} L(\lambda) \oplus S^{s} \kern-1.0pt \bigr)$; see
  \cite{Gro61}*{Corollaire~5.3} or \cite{Ser06}*{Corollary~4.4.6}. Similarly,
  the tangent space to $\Hilb^{\kern0.5pt p_\lambda} \kern-0.5pt (\PP^n)$ at
  the lexicographic points is the space of global sections of the sheaf
  associated to $\ZZ$-graded $S$-module
  $M \coloneqq \Hom_{S} \kern-0.5pt \bigl( L(\lambda), S^1 \mathbin{\!\!/\!}
  L(\lambda) \kern-1.5pt \bigr)$, so $N_{\lambda} = h^{0}(M)$. Since
  \[
    \Hom_{S} \kern-0.5pt \bigl( S^{r-s-1} \oplus L(\lambda), 
    \, S^1 \mathbin{\!\!/\!} L(\lambda) \oplus S^{s} \kern-1.0pt \bigr)
    \cong S^{(r-s-1)s} 
    \oplus \left(
      \bigoplus_{i=1}^{r-s-1} S^{1} \mathbin{\!\!/\!} L(\lambda) \kern-4.0pt
    \right)
    \oplus \left( \bigoplus_{j=1}^{s} \Hom_S \bigl( L(\lambda), S \bigr) 
      \kern-4.0pt \right) \oplus M \, ,
  \]
  the dimension of the desired tangent space is
  \[
    (r-s-1)s 
    + (r-s-1) \, h^0 \bigl(
      S^{1} \mathbin{\!\!/\!} L(\lambda) \kern-1.0pt
    \bigr)
    + s \, \smash{h^0 \kern-0.5pt \Bigl(
      \kern-0.5pt \Hom_S \bigl( L(\lambda), S \bigr)
      \kern-3.0pt \Bigr)} 
    + h^{0}(M) \, . 
  \]
  Using Lemma~\ref{l:h^0} and Lemma~\ref{l:dual} establishes the formula.
\end{proof}

Having determined the dimension of the tangent space at the lexicographic
point, we now construct a family of the same dimension passing through that
point. We begin with a special case.

\begin{lemma} 
  \label{l:s+1}
  Let $s$ be a nonnegative integer, let $\lambda$ be an integer partition
  $\lambda \coloneqq (\lambda_{1}, \lambda_{2}, \dotsc, \lambda_{e})$ such
  that $n \geqslant \lambda_{1}$, and consider the polynomial
  $q(t) \coloneqq s \binom{t+n}{n} + p_{\lambda}(t) \in \mathbb{Q}[t]$. When
  $r = s+1$, the lexicographic point in
  $\Quot^{\kern0.5pt q}\kern-0.5pt (\mathcal{O}_{\PP^n}^{\! r})$ is
  nonsingular.
\end{lemma}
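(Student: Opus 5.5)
When $r = s+1$, Proposition~\ref{p:dim} gives the tangent space dimension at the lexicographic point as $N_{\lambda} + s\binom{n+a_n}{n}$, because the factor $(r-s-1)$ vanishes. The plan is to exhibit a family through the lexicographic point whose dimension is at least this number. Then the lexicographic point lies on a component of dimension at least the tangent dimension, so it is nonsingular.

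The lexicographic point is $K = L(\lambda)\bm{e}_1 \oplus 0^{s}$, so the quotient is $S^1/L(\lambda) \oplus S^s$. Consider quotients of the form $S^{s+1} \to S^{s+1}/K'$, where $K'$ is the image of an ideal $I$ under an injective map $I \to S^{s+1}$ whose first component is the inclusion. More concretely, let $I$ range over the ideals in a neighbourhood of $L(\lambda)$ on the lexicographic component of $\Hilb^{p_\lambda}(\PP^n)$; there the lexicographic point is smooth of dimension $N_\lambda$ by \cite{RS97}*{Theorem~1.4}. Choose a homomorphism $\varphi = (\varphi_1, \dotsc, \varphi_s) \in \Hom_S(I, S)^{s}$ and take $K' = \{(f, \varphi_1(f), \dotsc, \varphi_s(f)) : f \in I\}$, the graph of $\varphi$. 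Then $S^{s+1}/K' \cong$ an extension with $S^1/I$ and $S^{s}$. Its Hilbert polynomial is $s\binom{t+n}{n} + p_\lambda$, and flatness over the parameter space holds, since the Hilbert function is constant in degrees where $I$ is flat.

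Equivalently, $\Hom_S(L(\lambda),S) \cong S(a_n)$ by Lemma~\ref{l:dual}. This extends to nearby $I$ via the fact that $\Hom(I,S) = (I^{\vee})$ behaves well. A cleaner route is to note $\sHom(\widetilde{I}, \mathcal{O})$ and use $h^0$ upper semicontinuity together with the graph construction. The graphs for distinct $\varphi$ give distinct submodules, so the family over the vector bundle with fibre $H^0(\sHom(\widetilde I,\mathcal{O}))^{s}$ has dimension at least $N_\lambda + s\binom{n+a_n}{n}$ near the lexicographic point. Lower semicontinuity is not needed: dimension near the lexicographic point is at least the fibre dimension there, once we know the fibre dimension is constant nearby.

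The main obstacle is controlling $h^0(\Hom(I,S))$ for $I$ near $L(\lambda)$, to show it stays equal to $\binom{n+a_n}{n}$ and the family is genuinely of that dimension. The first thing I will try is a more direct approach. The graph map $(I,\varphi) \mapsto K'$ gives an injective morphism from the total space of the sheaf $\pi_*\sHom(\mathcal{I},\mathcal{O})^{\oplus s}$ over the lexicographic component into the Quot scheme. The image has dimension at least $N_\lambda$ plus the generic fibre dimension. By upper semicontinuity, the fibre dimension is at most $\binom{n+a_n}{n}$ nearby. For a lower bound I plan to restrict to the open locus where $I$ is the ideal of a general point of the lexicographic component. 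This is a flag-type scheme containing a hypersurface of degree $a_n$ when $a_n>0$, so $\Hom(I,S)\supseteq S(a_n)$ via division by that hypersurface. That already gives $\binom{n+a_n}{n}$ sections, and so a family of dimension at least the tangent dimension, completing the argument.
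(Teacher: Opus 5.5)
Your argument is correct and is essentially the paper's own proof. The graph of $I \to S^{s}$, $f_0 g \mapsto (f_1 g, \dotsc, f_s g)$ with $f_i \in S_{a_n}$, is exactly the image of the paper's matrix with rows $f_i g_1, \dotsc, f_i g_m$ over the residual-flag family $I = f_0 \langle g_1, \dotsc, g_m \rangle$, so both give a family of dimension $N_{\lambda} + s\binom{n+a_n}{n}$ through the lexicographic point. The semicontinuity detour and the restriction to a general locus are unnecessary: every saturated ideal $I$ with Hilbert polynomial $p_{\lambda}$, including $L(\lambda)$ (which is divisible by $x_0^{a_n}$), factors as $f_0 J$ with $\deg f_0 = a_n$ and the elements of $J$ having no common factor, so $\Hom_S(I,S) = f_0^{-1} S \cong S(a_n)$ at every point.
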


\begin{proof} 
  Let $a_{n}$ denote the multiplicity of the part $n$ in the integer partition
  $\lambda$, and let $N_{\lambda}$ be the dimension of the lexicographic
  component of the Hilbert scheme
  $\Hilb^{\kern0.5pt p_\lambda} \kern-0.5pt (\PP^n)$. A general point on the
  lexicographic component (including the lexicographic point itself) of this
  Hilbert scheme corresponds to a residual flag; see \cite{RS97}*{Theorem~4.1}
  or \cite{SS23}*{\S1}. In particular, there exists an
  $N_{\lambda}$-dimensional family of ideals defining a full-dimensional
  neighbourhood of the lexicographic point such that each member has the form
  $I \coloneqq \langle f_0 \, g_1, f_0 \, g_2, \dotsc, f_0 \, g_m \rangle$,
  where the homogeneous polynomial $f_0 \in S$ has degree $a_{n}$.

  For the scheme
  $\Quot^{\kern0.5pt q}\kern-0.5pt (\mathcal{O}_{\PP^n}^{\! r})$,
  Proposition~\ref{p:dim} shows that the dimension of the tangent space at the
  lexicographic point is $\smash{N_{\lambda} + (r-1) \binom{n +
      a_{n}}{n}}$. Consider a nonzero matrix
  $\begin{bmatrix} f_1 & f_2 & \!\dotsb\! & f_{r-1} \end{bmatrix}$ whose
  entries are homogeneous polynomials in $S$ of degree $a_{n}$. Since
  $\smash{\dim_{\kk} S_{a_{n}} = \binom{n+a_{n}}{n}}$, the affine space
  parametrizing all such matrices has dimension
  $\smash{(r-1) \binom{n+a_n}{n}}$. The cokernel of the homogenous $S$-linear
  map to $S^r$ defined by the matrix
  \[
    \begin{bmatrix}
      f_0 \, g_1 & f_0 \, g_2 & \!\dotsb\! & f_0 \, g_m \\
      f_1 \, g_1 & f_1 \, g_2 & \!\dotsb\! & f_1 \, g_m \\
      f_2 \, g_1 & f_2 \, g_2 & \!\dotsb\! & f_2 \, g_m \\[-2pt]
      \vdots  & \vdots  & \!\ddots\! & \vdots  \\[-2pt]
      f_{r-1} \, g_1 & f_{r-1} \, g_2 & \!\dotsb\! & f_{r-1} \, g_m \\
    \end{bmatrix}
  \]
  corresponds to a point in
  $\Quot^{\kern0.5pt q}\kern-0.5pt (\mathcal{O}_{\PP^n}^{\! r})$. By
  construction, we obtain a family passing through the lexicographic point
  that has dimension $N_{\lambda} + (r-1) \binom{n + a_{n}}{n}$, so the
  lexicographic point on
  $\Quot^{\kern0.5pt q}\kern-0.5pt (\mathcal{O}_{\PP^n}^{\! r})$ is
  nonsingular.
\end{proof}

To treat arbitrary Quot schemes, we allow the rank of the ambient vector
bundle to increase.

\begin{lemma}
    \label{l:sec} 
    Let $k$ be a nonnegative integer. When the point in
    $\Quot^{\kern0.5pt q} (\mathcal{O}_{\PP^n}^{\! r})$ corresponding to the
    surjection $\mathcal{O}_{\PP^n}^{\! r} \to \mathcal{F}$ is nonsingular,
    the point in $\Quot^{\kern0.5pt q} (\mathcal{O}_{\PP^n}^{\! r+k})$
    corresponding to the surjection
    $\mathcal{O}_{\PP^n}^{\! r+k} \to \mathcal{F}$, obtained by precomposing
    with the projection
    $\mathcal{O}_{\PP^n}^{\! r+k} \to \mathcal{O}_{\PP^n}^{\! r}$, is also
    nonsingular.
\end{lemma}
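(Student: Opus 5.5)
Here is the plan. Compare the tangent space at the new point with the dimension of a family through it. Let $K = \ker(\mathcal{O}^r \to \mathcal{F})$, so the given point has tangent space $\Hom(K,\mathcal{F})$. Write $K' = \ker(\mathcal{O}^{r+k} \to \mathcal{F})$. Since the surjection factors through the projection, we have $K' \cong K \oplus \mathcal{O}^{k}$. Hence
\[
  \Hom(K',\mathcal{F}) \cong \Hom(K,\mathcal{F}) \oplus H^0(\mathcal{F})^{k},
\]
and the tangent space at the new point has dimension $T + k\,h^0(\mathcal{F})$, where $T$ is the tangent dimension at the old point.

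Next I build a family of that dimension through the new point. The old point is nonsingular, so it lies on a unique component $Z$ of dimension $T$; near the point, $Z$ is smooth with tangent space $\Hom(K,\mathcal{F})$. Consider the universal quotient $\mathcal{O}^r_Z \to \mathcal{F}_Z$ on a neighbourhood $U \subseteq Z$. Form the vector bundle $V$ over $U$ whose fibre over $u$ is $H^0(\mathcal{F}_u)^{k}$. To a point $(u,\sigma_1,\dots,\sigma_k)$ of $V$ assign the quotient
\[
  \mathcal{O}^{r+k} = \mathcal{O}^r \oplus \mathcal{O}^k \longrightarrow \mathcal{F}_u,
\]
given by the old map on the first summand and $\sigma_i$ on the $i$th extra summand. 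This map is still surjective and has Hilbert polynomial $q$, so we get a morphism $V \to \Quot^{q}(\mathcal{O}^{r+k})$ sending the zero section over the old point to the new point.

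The map $V \to \Quot^{q}(\mathcal{O}^{r+k})$ is injective, because the kernel $\{(a,b) : \phi(a) + \sigma(b) = 0\}$ recovers both $\phi$ and $\sigma$. Indeed, $\ker\phi$ is the intersection of the kernel with $\mathcal{O}^r$, and $\sigma(b)$ is determined as $-\phi(a)$. So the image has dimension $T + k\,h^0(\mathcal{F})$, which matches the tangent space, and the point is nonsingular.

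The main obstacle is that $V$ is a genuine vector bundle only if $h^0(\mathcal{F}_u)$ is constant near the point. Upper semicontinuity alone gives only $\le$, which points the wrong way. There are two ways around this. The cleaner one is to avoid constancy entirely: the tangent map of $V \to \Quot$ at the point is the identification
\[
  \Hom(K,\mathcal{F}) \oplus H^0(\mathcal{F})^k \cong \Hom(K',\mathcal{F}),
\]
so it suffices to show the image is a smooth germ of the full dimension. The alternative is to use the fibre of $H^0(\mathcal{F}_u)$ at the special point, which has maximal dimension; the image is then a locally closed set containing the point whose dimension near the point is at least $T + k\,h^0$, provided the sections extend.

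I would instead argue via obstruction theory, which avoids the extension problem. The obstruction space at the new point is $\Ext^1(K',\mathcal{F}) = \Ext^1(K,\mathcal{F}) \oplus H^1(\mathcal{F})^k$. Rather than control this space directly, the cleanest route is to show that the forgetful map on deformation functors, from deformations of $\mathcal{O}^{r+k} \to \mathcal{F}$ to deformations of $\mathcal{O}^r \to \mathcal{F}$, is smooth. Given a lifted quotient $\mathcal{F}_A$ over a small extension $A' \to A$, the extra $k$ sections lift because $\mathcal{F}_{A'}$ is flat. Their images in $H^0(\mathcal{F}_{A})$ lift to $H^0(\mathcal{F}_{A'})$ only if the obstruction in $H^1$ vanishes, and that is the point to be checked. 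If this fails in general, I would fall back to the semicontinuity argument on a Zariski neighbourhood where $h^0$ is constant, for instance when $H^1(\mathcal{F}) = 0$, which covers the cases used later.
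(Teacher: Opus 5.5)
\textbf{There is a genuine gap: the proposal never actually proves the lemma.} Your setup matches the paper's proof: $K'=K\oplus\mathcal{O}^{k}$, the tangent space at the new point is $\Hom(K,\mathcal{F})\oplus H^0(\mathcal{F})^{k}$, and you copair a $T$-dimensional family of quotients of $\mathcal{O}^{r}$ with $k$ sections. The step you correctly flag as the main obstacle is then left unresolved.
\begin{itemize}
\item The ``cleaner'' route assumes that the image is a smooth germ of full dimension. That is the statement itself, so the argument is circular.
\item The second route works only ``provided the sections extend'', which is the same issue.
\item The obstruction-theoretic route ends at ``that is the point to be checked''. Flatness of $\mathcal{F}_{A'}$ does not make sections lift. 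The relative obstruction lies in $H^1(\mathcal{F})\otimes I$. Smoothness of the forgetful map of deformation functors is equivalent to $\pi_*$ of the universal quotient commuting with base change near the point, which is the same condition again.
\end{itemize}

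\textbf{This is not a removable technicality.} Let $u_0$ be the given point, and let $W\subseteq\Quot^{q}(\mathcal{O}^{r+k})$ be the open subset where the first $r$ summands still surject onto the quotient. The reasoning behind your injectivity claim shows, functorially, that $W$ represents pairs consisting of a point $u$ of $\Quot^{q}(\mathcal{O}^{r})$ and $k$ sections of $\mathcal{F}_u$. Hence $W$ is the linear fibre space $\mathbb{V}(\mathcal{Q}^{\oplus k})$ of the coherent sheaf $\mathcal{Q}$ satisfying $\pi_{X*}\mathcal{F}_X\cong\sHom(\mathcal{Q}_X,\mathcal{O}_X)$ for all $X$.
\begin{itemize}
\item Suppose $k\geqslant 1$ and $h^0(\mathcal{F}_u)<h^0(\mathcal{F})$ for general $u$ near $u_0$. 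A fibre-dimension count then shows that every component of $W$ through the new point has dimension strictly less than $T+k\,h^0(\mathcal{F})$. So the new point is singular.
\item If instead $h^0$ is locally constant, then $\mathcal{Q}$ is locally free, because the base is reduced at $u_0$. Then $W$ is a vector bundle over a smooth base, and the new point is nonsingular.
\end{itemize}
So the missing input is exactly local constancy of $h^0(\mathcal{F}_u)$ near $u_0$. Neither your tangent-space bookkeeping nor smoothness at $u_0$ supplies it. The paper's copairing argument also takes it for granted, since its family $\Psi$ is built over the fixed sheaf $\mathcal{F}$.

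\textbf{Your fallback is sound but too weak.} The condition $H^1(\mathcal{F})=0$ does force constancy, by cohomology and base change. However, it does not ``cover the cases used later''. Theorem~\ref{t:smooth} applies the lemma at the lexicographic point for $n=2$, $s=0$, $\lambda=(2,2,2)$. There $\mathcal{F}=\mathcal{O}_Z$ with $Z=V(x_0^3)$ a plane cubic, so $h^1(\mathcal{F})=1$. At such points you must verify constancy directly. For example, show that the general point of the unique component through $u_0$ has the same $h^0$ as $u_0$; together with upper semicontinuity, this forces $h^0$ to be constant nearby.
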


\begin{proof} 
  Let $\mathcal{K}$ be the kernel of the surjection
  $\mathcal{O}^{\! r}_{\PP^n} \to \mathcal{F}$. The dimension of the tangent
  space to $\Quot^{\kern0.5pt q} (\mathcal{O}_{\PP^n}^{\! r})$ at the point
  corresponding to $\mathcal{O}_{\PP^n}^{\! r} \to \mathcal{F}$ is
  $h^0 \kern-0.5pt \bigl( \PP^n, \sHom(\mathcal{K}, \mathcal{F}) \kern-1.0pt
  \bigr)$. Similarly, the dimension of the tangent space to
  $\Quot^{\kern0.5pt q} (\mathcal{O}_{\PP^n}^{\! r+k})$ at the point
  corresponding to $\mathcal{O}_{\PP^n}^{\! r+k} \to \mathcal{F}$ is
  \[
    h^0 \kern-0.5pt \bigl( \PP^n, 
    \sHom ( \mathcal{K} \oplus \mathcal{O}^{\! k}_{\PP^{n}}, \mathcal{F}) 
    \kern-1.0pt \bigr) 
    = h^{0} \kern-0.5pt \bigl( \PP^n, \sHom ( \mathcal{K}, \mathcal{F})
    \kern-1.0pt \bigr) + k \, h^0 (\PP^n, \mathcal{F}) \, ,
  \]
  because $\sHom( \mathcal{O}^{\! 1}_{\PP^n}, -)$ is the identity functor.

  The hypothesis that the given point in
  $\Quot^{\kern0.5pt q} (\mathcal{O}_{\PP^n}^{\! r})$ is nonsingular means
  that there exists a family
  $\Phi \colon \mathcal{O}_{\PP^n}^{\! r} \to \mathcal{F}$ of surjections
  having dimension
  $h^0 \kern-0.5pt \bigl( \PP^n, \sHom(\mathcal{K}, \mathcal{F}) \kern-1.0pt
  \bigr)$ and passing through this point. Since the maps
  $\mathcal{O}_{\PP^n}^{\! 1} \to \mathcal{F}$ are parametrized by the global
  sections of $\mathcal{F}$, there also exists a family
  $\Psi \colon \mathcal{O}_{\PP^n}^{\! k} \to \mathcal{F}$ having dimension
  $k \, h^0 (\PP^n, \mathcal{F})$ and containing the zero map.  Hence, the
  copairing map
  $\begin{bmatrix} \Phi & \Psi \end{bmatrix} \colon \mathcal{O}_{\PP^n}^{\! r}
  \oplus \mathcal{O}_{\PP^n}^{\! k} \to \mathcal{F}$ (also known as the
  induced map arising from the universal property of the coproduct) determines
  a family that has dimension
  $h^{0} \kern-0.5pt \bigl( \PP^n, \sHom ( \mathcal{K}, \mathcal{F})
  \kern-1.0pt \bigr) + k \, h^0 (\PP^n, \mathcal{F})$ and passes through the
  given point in $\Quot^{\kern0.5pt q} (\mathcal{O}_{\PP^n}^{\! r+k})$.  We
  conclude that the desired point in
  $\Quot^{\kern0.5pt q} (\mathcal{O}_{\PP^n}^{\! r+k})$ is also smooth.
\end{proof}

Generalizing \cite{RS97}*{Theorem~1.4}, we exhibit a smooth point of any
$\Quot^{\kern0.5pt q} \kern-0.5pt (\mathcal{O}_{\PP^n}^{\! r})$.

\begin{theorem}
  \label{t:smooth} 
  For any nonempty Quot scheme
  $\Quot^{\kern0.5pt q}(\mathcal{O}^{\! r}_{\PP^n})$, the lexicographic point
  is smooth.
\end{theorem}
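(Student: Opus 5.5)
The plan is to reduce to the case $r = s+1$ and then raise the rank with Lemma~\ref{l:sec}. By Proposition~\ref{p:nonempty}, a nonempty Quot scheme has Hilbert polynomial $q(t) = s\binom{t+n}{n} + p_{\lambda}(t)$ for some $r > s$ and $n \geqslant \lambda_1$. Its lexicographic point corresponds to the submodule
\[
K \coloneqq S^{r-s-1} \oplus L(\lambda) \oplus 0^{s} \subseteq S^{r}.
\]
The edge case $r = s$ (equivalently $K = 0$) is the single point given by the identity quotient. That Quot scheme has tangent space $H^0\bigl(\sHom(0, \mathcal{O}^{r}_{\PP^n})\bigr) = 0$ and is a reduced point, so it is smooth. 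From now on assume $r > s$.

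The quotient defined by $K$ is
\[
\mathcal{F} \coloneqq \bigl(S^1 \mathbin{\!\!/\!} L(\lambda) \oplus S^{s}\bigr)^{\sim}.
\]
As a quotient of $\mathcal{O}_{\PP^n}^{r}$, it is obtained from the surjection $\mathcal{O}_{\PP^n}^{s+1} \to \mathcal{F}$ given by $L(\lambda) \oplus 0^{s} \subseteq S^{s+1}$. One precomposes with the projection $\mathcal{O}_{\PP^n}^{r} \to \mathcal{O}_{\PP^n}^{s+1}$ that kills the first $r-s-1$ basis vectors. The surjection $\mathcal{O}^{s+1}_{\PP^n} \to \mathcal{F}$ is exactly the lexicographic point of $\Quot^{\kern0.5pt q}(\mathcal{O}^{s+1}_{\PP^n})$, and Lemma~\ref{l:s+1} shows this point is nonsingular. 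Applying Lemma~\ref{l:sec} with $k = r-s-1$ then makes the corresponding point of $\Quot^{\kern0.5pt q}(\mathcal{O}^{r}_{\PP^n})$ nonsingular.

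Two bookkeeping details need checking. First, the projection in Lemma~\ref{l:sec} puts the extra $k$ summands in a fixed position, while in $K$ they occupy the first $r-s-1$ coordinates. A permutation in $\GL(r,\kk)$ moves one to the other, and it acts by automorphisms of the Quot scheme, so smoothness is unaffected. Second, the kernel of the precomposed map is $\mathcal{K} \oplus \mathcal{O}^{k}$, which agrees with $K$ after sheafifying. Since $K$ need not be saturated, one should note that sheafification only depends on high degrees; the point is the sheaf quotient.

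The main obstacle is already handled inside Lemma~\ref{l:s+1}: producing a family whose dimension equals the tangent space dimension computed in Proposition~\ref{p:dim}. Here it suffices to check that the dimensions match. Lemma~\ref{l:sec} adds $k\,h^0(\mathcal{F})$ to both the family and the tangent space. Lemmas~\ref{l:h^0} and~\ref{l:dual} give
\[
h^0(\mathcal{F}) = s + h^0\bigl(S^1 \mathbin{\!\!/\!} L(\lambda)\bigr),
\]
which matches the extra term $(r-s-1)(s+a_1)$ or $(r-s-1)(s+a_1+1)$ in Proposition~\ref{p:dim}. So the two computations are consistent, and the lexicographic point is smooth.
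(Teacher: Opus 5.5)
Your proposal is correct and follows the paper's own proof. The paper also treats $r=s$ as a single reduced point, invokes Lemma~\ref{l:s+1} at rank $s+1$, and then raises the rank with Lemma~\ref{l:sec} (your $k=r-s-1$); your remarks about permuting summands make explicit what the paper leaves implicit, but the closing dimension check is redundant, and $h^0(\mathcal{F})$ needs only $h^0(\mathcal{O}_{\PP^n})=1$ and Lemma~\ref{l:h^0}, not Lemma~\ref{l:dual}.
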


\begin{proof} 
  Proposition~\ref{p:nonempty} shows that the scheme
  $\Quot^{\kern0.5pt q}(\mathcal{O}^{\! r}_{\PP^n})$ is nonempty if and only
  if there exists a nonnegative integer $s$ and an integer partition
  $\lambda \coloneqq (\lambda_{1}, \lambda_{2}, \dotsc, \lambda_{e})$ such
  that $r \geqslant s$, $n \geqslant \lambda_{1}$, and
  $q(t) = s \binom{t+n}{n} + p_{\lambda}(t) \in \QQ[t]$. When $r = s$, we must
  have $\lambda = ()$ and the Quot scheme is a simple point and, thereby,
  smooth. When $r = s + 1$, Lemma~\ref{l:s+1} demonstrates that the
  lexicographic point on the scheme
  $\Quot^{\kern0.5pt q}(\mathcal{O}^{\! r}_{\PP^n})$ is smooth. Furthermore,
  for any nonnegative integer $k$, Lemma~\ref{l:sec} establishes that the
  lexicographic point on the scheme
  $\Quot^{\kern0.5pt q}(\mathcal{O}^{\! r+k}_{\PP^n})$ is also smooth.
\end{proof}

As an immediate application, we generalize \cite{Sta20}*{Lemma~5.6}.

\begin{corollary}
  \label{c:easy}
  Let $s$ be a nonnegative integer, let
  $\lambda \coloneqq (\lambda_1, \lambda_2, \dotsc, \lambda_{e})$ be an
  integer partition such that $r > s$, $n \geqslant \lambda_{1}$, and consider
  the polynomial
  $q(t) \coloneqq s \binom{t+n}{n} + p_{\lambda}(t) \in \QQ[t]$. Assuming that
  $\lambda_e \geqslant 2$, $\lambda = (1)$, or $\lambda = ()$, the scheme
  $\Quot^{\kern0.5pt q}\kern-0.5pt (\mathcal{O}_{\PP^n}^{\! r})$ is smooth and
  irreducible.
\end{corollary}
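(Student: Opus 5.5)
The plan is to combine Corollary~\ref{c:!b} with Theorem~\ref{t:smooth}, using the standard Borel-fixed-point degeneration argument. First, by Lemma~\ref{l:smooth} we may work over a field $\kk$, and after a flat base change we may take $\kk$ algebraically closed. The proof of Corollary~\ref{c:!b} only uses characteristic-free combinatorics on monomial ideals. It supplies the standard Borel-fixed submodules, and the cited uniqueness results for $L(\lambda)$ are characteristic-free. Granting this, the corollary shows that the lexicographic submodule $S^{r-s-1} \oplus L(\lambda) \oplus 0^{s}$ is the unique saturated Borel-fixed submodule with Hilbert polynomial $q$. Hence the lexicographic point is the only Borel-fixed point of $\Quot^{\kern0.5pt q}\kern-0.5pt(\mathcal{O}_{\PP^n}^{\! r})$.

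Next, I would show that every irreducible component of the Quot scheme contains a Borel-fixed point. Let $Z$ be an irreducible component. Since $\GL(n+1,\kk) \times \GL(r,\kk)$ is connected, it preserves each irreducible component, so $Z$ is invariant. Moreover $Z$ is projective, being closed in the projective Quot scheme. The Borel-fixed point theorem~\cite{Bor91}*{Proposition~15.2}, applied to the connected solvable group $\operatorname{B}(n+1,\kk) \times \operatorname{B}(r,\kk)$ acting on $Z$, yields a fixed point in $Z$. This fixed point must be the lexicographic point, so every irreducible component passes through it.

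Finally, Theorem~\ref{t:smooth} says the lexicographic point is a smooth point. A smooth point of a Noetherian scheme has a regular, hence integral, local ring. It therefore lies on exactly one irreducible component, so the Quot scheme is irreducible. Being irreducible, it equals the component through the lexicographic point, which I will call the lexicographic component.

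To get smoothness everywhere, consider the singular locus. It is closed and invariant under $\GL(n+1,\kk)\times\GL(r,\kk)$. If it were nonempty, it would be a nonempty projective invariant closed subscheme. Applying the Borel-fixed point theorem again, it would contain a Borel-fixed point, which must be the lexicographic point. This contradicts Theorem~\ref{t:smooth}, so the Quot scheme is nonsingular over $\kk$, and Lemma~\ref{l:smooth} gives smoothness.

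The main obstacle is the uniqueness input in positive characteristic. Corollary~\ref{c:!b} concerns saturated Borel-fixed submodules, while points of the Quot scheme correspond to saturated submodules. Nonstandard Borel-fixed ideals such as $\langle x_0^p, x_1^p\rangle$ exist in characteristic $p$. I would first check that the cited uniqueness statements (\cite{Sta20}*{Theorem~1.1}, \cite{SS23}*{Theorem~3.2}) cover all Borel-fixed ideals, not just strongly stable ones. If they do not, the fallback is to run the whole argument over $\ZZ$ and use Lemma~\ref{l:smooth} fibrewise, or to degenerate via the torus and a Gröbner deformation, replacing an arbitrary fixed point by a generic initial submodule that is strongly stable.
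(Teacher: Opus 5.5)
Your proposal is correct, and its first half is the paper's argument. Corollary~\ref{c:!b} makes the lexicographic point the only Borel-fixed point, and Theorem~\ref{t:smooth} makes it a regular point, so it lies on exactly one irreducible component. You also make explicit a step the paper leaves implicit: each component is stable under the connected group $\GL(n+1,\kk)\times\GL(r,\kk)$ and therefore contains a Borel-fixed point. This is what turns ``the lexicographic point is not on an intersection of components'' into irreducibility.

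The second half takes a different route. The paper propagates smoothness by degeneration. Pardue's generic initial submodule gives a one-parameter family joining an arbitrary point to the unique Borel-fixed point, and upper semicontinuity of tangent-space dimension then bounds every tangent space by the one at the lexicographic point. You instead apply the Borel fixed-point theorem a second time, to the closed, group-stable singular locus; if nonempty it would have to contain the lexicographic point. Your version is shorter and needs only the fixed-point theorem and openness of the smooth locus. The paper's version produces explicit degenerations and is the same tangent-space comparison mechanism it reuses in Proposition~\ref{p:n^e1}.

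On positive characteristic: the uniqueness you need is exactly the statement of Corollary~\ref{c:!b}, which covers all saturated Borel-fixed submodules. The paper also simply invokes it, so your main argument stands on the same footing. Your fallbacks would not help, however. In characteristic $p$ a generic initial submodule is Borel-fixed but need not be strongly stable; for instance, $\langle x_0^p, x_1^p \rangle$ is its own generic initial ideal. Working over $\ZZ$ with Lemma~\ref{l:smooth} still requires analysing the fibres over $\mathbb{F}_p$.
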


\begin{proof}
  When $\lambda_e \geqslant 2$, $\lambda = (1)$, or $\lambda = ()$,
  Corollary~\ref{c:!b} establishes that
  $\Quot^{\kern0.5pt q}\kern-0.5pt (\mathcal{O}_{\PP^n}^{\! r})$ has a unique
  Borel-fixed point, namely the lexicographic point. Since
  Theorem~\ref{t:smooth} shows that it is nonsingular, the lexicographic point
  on this Quot scheme cannot lie on an intersection of irreducible
  components. Therefore, the scheme
  $\Quot^{\kern0.5pt q}\kern-0.5pt (\mathcal{O}_{\PP^n}^{\! r})$ has a unique,
  generically nonsingular, irreducible component. Furthermore, the generic
  submodule of initial terms described in \cite{Par96}*{Proposition~4} yields
  a one-parameter family on
  $\Quot^{\kern0.5pt q}\kern-0.5pt (\mathcal{O}_{\PP^n}^{\! r})$ connecting
  any point to the unique Borel-fixed point. Since the dimension of the
  tangent space at a point in family is an upper-semicontinuous function, we
  conclude that every point on the Quot scheme is nonsingular.
\end{proof}

\section{A Family On The Lexicographic Component}
\label{s:fam}

\noindent
As a complement to the previous section, we exhibit an explicit
full-dimensional family on the lexicographic component of
$\Quot^{\kern0.5pt p_{\lambda}} \kern-0.5pt (\mathcal{O}_{\PP^n}^{2})$ where
the integer partition is $\lambda = (n^{e-1},1)$ with $e \geqslant 2$. This
covers the second and third conditions in the ($r \geqslant s+2)$-case of
Theorem~\ref{t:main} when $r = 2$ and $s = 0$.  Notably, the proof of
Theorem~\ref{t:smooth} avoids many of the intricacies arising in these
examples.  Throughout this section, the base scheme is $\Spec(\kk)$ and
$S \coloneqq \kk[x_0, x_1, \dotsc, x_n]$ is still the homogeneous coordinate
ring of $\PP^n$.

Consider the nonempty subpartition $\mu \coloneqq (n^{e-1})$ of the integer
partition $\lambda$.  Let
\[
  f \coloneqq x_{0}^{e-1} + \sum_{\bm{u}} z_{\bm{u}} \, \bm{x}^{\bm{u}}
\]
be a generic homogeneous polynomial in $S$ where the sum runs over all
monomials in $S$ of degree $e-1$ except $x_0^{e-1}$.  Setting
$m \coloneqq \binom{n+e-1}{e-1} - 1$, the $m$-tuple
$( \dotsc, z_{\bm{u}}, \dotsc)$ of indeterminate coefficients is a point in
$\AA^{\!m}$.  Similarly, for each index $i$ satisfying
$0 \leqslant i \leqslant n-1$, let $g_{i} \coloneqq x_i - y_i \, x_n$ be a
linear polynomial in $S$.  We also regard the $n$-tuple
$(y_0, y_1, \dotsc, y_{n-1})$ of coefficients as a point in $\AA^{\!n}$.

Given this notation, the next proposition describes the desired family
explicitly.

\begin{proposition}
  \label{p:fam} 
  Consider the integer partition $\lambda \coloneqq (n^{e-1}, 1)$ where
  $e \geqslant 2$.  Let $[\alpha \mathbin{:} \beta]$ and
  $[\gamma \mathbin{:} \delta]$ be points in $\PP^1$, and let
  $U \subset \PP^1 \times \PP^1$ be the open complement of the subvariety
  defined by the equation $\alpha \delta - \beta \gamma = 0$. For any point in
  $\mathbb{A}^{\!m} \times \mathbb{A}^{\!n} \times U$, the cokernel of the
  homogeneous $S$-linear map
  \begin{align*}
    S^2
    &\xleftarrow{{ \quad
      \mathbf{F} \coloneqq
      \begin{bmatrix}
        \alpha \, g_0 & \alpha \, g_1 & \dotsb & \alpha \, g_{n-1} & \gamma \, f  \\
        \beta \, g_0  & \beta \, g_1  & \dotsb & \beta \, g_{n-1}  & \delta \, f \\
      \end{bmatrix} \quad}}
    S^n(-1) \oplus S^1(-e+1) 
  \end{align*}
  defines a point in
  $\Quot^{\kern0.5pt p_{\lambda}} \kern-0.5pt (\mathcal{O}_{\PP^n}^{\! 2})$,
  and this family identifies the product $\AA^{\!m} \times \AA^{\!n} \times U$
  with an open dense subset of the lexicographic component.  Moreover, this
  family contains the Borel-fixed point corresponding to the submodule
  $L(1) \oplus L(\mu)$ in $S^2$, and the lexicographic point lies in its
  closure.
\end{proposition}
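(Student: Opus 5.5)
The plan is to factor the matrix $\mathbf{F}$ through an invertible change of basis on $S^2$, so that the cokernel splits as a direct sum we already understand. On $U$ the matrix $A \coloneqq \left[\begin{smallmatrix} \alpha & \gamma \\ \beta & \delta \end{smallmatrix}\right]$ lies in $\GL(2,\kk)$, and
\[
  \mathbf{F} = A \cdot \begin{bmatrix} g_0 & \dotsb & g_{n-1} & 0 \\ 0 & \dotsb & 0 & f \end{bmatrix} .
\]
Hence $\coker \mathbf{F} \cong S^1 \mathbin{\!\!/\!} \langle g_0, \dotsc, g_{n-1} \rangle \oplus S^1 \mathbin{\!\!/\!} \langle f \rangle$. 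This is the structure sheaf of the point $[y_0 : \dotsb : y_{n-1} : 1]$ plus that of a degree-$(e-1)$ hypersurface. Its Hilbert polynomial is $1 + p_{(n^{e-1})} = p_{(n^{e-1},1)}$, by the second case of Lemma~\ref{l:sum} (or a direct check of the binomial sum). This holds uniformly over the base, so the cokernels are flat and define a morphism $\AA^{\!m} \times \AA^{\!n} \times U \to \Quot^{\kern0.5pt p_{\lambda}} \kern-0.5pt (\mathcal{O}_{\PP^n}^{2})$.

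Next I would show this morphism is injective on points, hence quasi-finite. The submodule $K = \image \mathbf{F}$ is $A\bigl( \langle g \rangle \bm{e}_1 \oplus \langle f \rangle \bm{e}_2 \bigr)$. Its degree-$1$ piece is $\operatorname{span}(g_i) \otimes A\bm{e}_1$, which recovers $[\alpha:\beta]$ and the linear forms $g_i$, hence $y$. Modulo $S \cdot K_1$, the degree-$(e-1)$ piece contributes the line spanned by $f \cdot A\bm{e}_2$. This recovers $[\gamma:\delta]$ and $f$ up to scalar, and the normalization of the $x_0^{e-1}$ coefficient fixes $f$. One must check that $f$ cannot be absorbed into $\langle g\rangle$; this holds because $A\bm{e}_1$ and $A\bm{e}_2$ are independent. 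So the image is a constructible set of dimension $m + n + 2$.

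To compare dimensions, apply Proposition~\ref{p:dim} with $r = 2$, $s = 0$, $a_1 = 1$ and $e > a_1$. The tangent space at the lexicographic point has dimension $N_\lambda + 2$. The lexicographic component of $\Hilb^{p_\lambda}(\PP^n)$ generically parametrizes a hypersurface of degree $e-1$ together with a point, so $N_\lambda = m + n$. By Theorem~\ref{t:smooth} the lexicographic point is smooth, so the lexicographic component has dimension exactly $m+n+2$. The closure of our image is therefore an irreducible component of the Quot scheme once we know it lies in some component of that dimension. It is the lexicographic component as soon as the lexicographic point lies in its closure, because a smooth point lies on only one component.

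For the remaining claims, specializing to $\alpha = \delta = 1$, $\beta = \gamma = 0$, $y = 0$, $z = 0$ gives $\langle x_0, \dotsc, x_{n-1}\rangle \bm{e}_1 \oplus \langle x_0^{e-1}\rangle \bm{e}_2 = L(1) \oplus L(\mu)$, the Borel-fixed point. The main obstacle is exhibiting the lexicographic point $S\bm{e}_1 \oplus L(\lambda)\bm{e}_2$ in the closure; it requires letting the two lines $[\alpha:\beta]$ and $[\gamma:\delta]$ collide while the point moves onto the hypersurface. I would first try a one-parameter family over $\kk[t]$: fix $[\alpha:\beta] = [0:1]$ and $[\gamma:\delta] = [1:t]$, take $f = x_0^{e-1}$ and the point $[0:\dotsb:0:1]$, and compute the flat limit at $t=0$. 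This limit is the saturation of the $t$-torsion-free closure of the submodule. I expect the elements $(f, tf)$ and $(0, g_i)$ to produce $\bm{e}_2 f g_i$ and, after dividing by $t$, an element whose $\bm{e}_1$-component is $f$. If this limit is only the Borel-fixed point $L(1)\oplus L(\mu)$ or another intermediate point, the fallback is to compose with a further degeneration inside the Borel orbit closure, using $\GL(2)$ to slide $\bm{e}_1$-components into $\bm{e}_2$. Alternatively, one can invoke the generic initial submodule of \cite{Par96}*{Proposition~4} to connect a general member of the family to the lexicographic point, checking via Hilbert function comparison that the limit is $S\bm{e}_1 \oplus L(\lambda)\bm{e}_2$.
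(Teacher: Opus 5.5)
Your outline follows the paper's: factor $\mathbf{F}$ through the invertible matrix $A$, read off $p_\lambda$, compare $m+n+2$ with the tangent dimension $N_\lambda+2$ from Proposition~\ref{p:dim}, specialize to $L(1)\oplus L(\mu)$, and degenerate to the lexicographic point. However, two steps fail as written.

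\emph{Injectivity.} Your claim that $[\gamma \mathbin{:} \delta]$ can be recovered from $K$ is false.

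If $f$ vanishes at $[y_0 \mathbin{:} \dotsb \mathbin{:} y_{n-1} \mathbin{:} 1]$, then $f \in \langle g_0, \dotsc, g_{n-1} \rangle$. So $f \, A\bm{e}_1 \in \langle g \rangle A\bm{e}_1 \subseteq K$, and hence $K = \langle g \rangle A\bm{e}_1 + \langle f \rangle (A\bm{e}_2 + c\, A\bm{e}_1)$ for every $c \in \kk$. The independence of $A\bm{e}_1$ and $A\bm{e}_2$ does not prevent this.

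It happens at the very Borel-fixed point you exhibit. Because $x_0^{e-1} \in L(1)$, every choice $y = 0$, $z = 0$, $[\alpha \mathbin{:} \beta] = [1 \mathbin{:} 0]$, $[\gamma \mathbin{:} \delta] = [c \mathbin{:} 1]$ gives $L(1) \oplus L(\mu)$, so that fibre is a copy of $\AA^{\!1}$. (For $e=2$ your description of $K_1$ is also wrong, since $f$ is then linear.)

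What does hold is injectivity on the dense open set where $f(y_0, \dotsc, y_{n-1}, 1) \neq 0$. There $\coker \mathbf{F} \cong \mathcal{O}_P \oplus \mathcal{O}_{V(f)}$ with $P \notin V(f)$. The supports give $y$ and $f$. The kernels of the induced maps from $\kk^2$ to the fibres at $P$ and at a general point of $V(f)$ give $[\gamma \mathbin{:} \delta]$ and $[\alpha \mathbin{:} \beta]$.

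That is all your dimension count needs. It yields a generically injective family, dense in the lexicographic component once the lexicographic point is in its closure. It does not identify the whole product with an open subset. The paper's proof, which rests only on $m+n+2 = N_\lambda+2$, does not establish that literal claim either, and by the computation above it fails along the divisor $f(y_0,\dotsc,y_{n-1},1)=0$.

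\emph{The lexicographic point.} Your test family does not degenerate at all. Since $\det \left[\begin{smallmatrix} 0 & 1 \\ 1 & t \end{smallmatrix}\right] = -1$, the lines never collide. Since $[0 \mathbin{:} \dotsb \mathbin{:} 0 \mathbin{:} 1] \in V(x_0^{e-1})$, the observation above shows the submodule is $L(\mu)\bm{e}_1 \oplus L(1)\bm{e}_2$ for every $t$.

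Your first fallback cannot help. The $\GL(n+1,\kk)\times\GL(2,\kk)$-orbit of a Borel-fixed point is a quotient by a parabolic subgroup, hence closed, and it contains no other Borel-fixed point.

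The generic-initial-module route does work, but a ``Hilbert function comparison'' cannot decide the limit. $S^2/K$ has exactly the Hilbert function of $S^2/\bigl(L(1)\oplus L(\mu)\bigr)$, so the lexicographic point appears only after saturating $\operatorname{in}(K)$. The missing computation is as follows.
\begin{itemize}
\item Take a member with $\alpha\gamma\neq0$ and $f(y_0,\dotsc,y_{n-1},1)\neq0$, and use position-over-term lex order with $\bm{e}_1$ dominant.
\item The first coordinates of elements of $K$ form $\langle g_0,\dotsc,g_{n-1},f\rangle$, which is primary to $\langle x_0,\dotsc,x_n\rangle$.
\item Meanwhile $K\cap S\bm{e}_2 = \bigl(\langle g\rangle\cap\langle f\rangle\bigr)\bm{e}_2 = f\langle g\rangle\bm{e}_2$.
\item Hence the saturation of $\operatorname{in}(K)$ is $S\bm{e}_1 \oplus x_0^{e-1}\langle x_0,\dotsc,x_{n-1}\rangle\bm{e}_2 = S\bm{e}_1\oplus L(\lambda)\bm{e}_2$.
\item The Gr\"obner degeneration runs along a one-parameter torus subgroup, so it stays in the closure of the family, which is stable under $\GL(n+1,\kk)\times\GL(2,\kk)$.
\end{itemize}

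The paper instead degenerates inside the family itself. It sets $z=0$, $y=(y_0,0,\dotsc,0)$, $[\alpha \mathbin{:} \beta]=[1 \mathbin{:} \beta]$, $[\gamma \mathbin{:} \delta]=[1 \mathbin{:} \beta(y_0^e+1)]$, and lets $\beta,y_0\to0$, so the lines collide while the point slides onto $V(x_0^{e-1})$. Flatness is certified by an explicit Gr\"obner basis whose leading terms do not depend on the parameters. Your heuristic describes exactly this degeneration, but the family you wrote down does not realize it.
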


\begin{proof} 
  For each point in $\AA^{\!m} \times \AA^{\!n}$, consider the cokernel of the
  homogeneous $S$-linear map
  \begin{align*}
    S^2
    &\xleftarrow{{ \quad
      \begin{bmatrix}
        g_0 & g_1 & \dotsb & g_{n-1} & 0  \\
        0   & 0   & \dotsb & 0 & f \\
      \end{bmatrix} \quad}}
    S^n(-1) \oplus S^1(-e+1) \, . 
  \end{align*}   
  By construction, this quotient of $S^2$ is a direct sum of the comodules for
  the saturated ideals corresponding to the point
  $[y_0 \mathbin{:} y_1 \mathbin{:} \dotsb \mathbin{:} y_{n-1} \mathbin{:} 1]$
  in $\PP^n$ and the hypersurface in $\PP^n$ defined by the homogeneous
  polynomial $f$ in $S$.  It follows that each of these quotient modules
  corresponds to a distinct quotient sheaf of $\mathcal{O}_{\PP^n}^{\! 2}$,
  and the Hilbert polynomial of the fibre over a point in
  $\AA^{\!m} \times \AA^{\!n}$ is $p_{\lambda}$ where
  $\lambda = (n^{e-1}, 1)$; see \cite{SS23}*{Remark~1.9}.  Multiplying on the
  left by the invertible matrix
  \[
    \begin{bmatrix} 
      \alpha & \gamma \\ 
      \beta  & \delta \\
    \end{bmatrix} 
    \quad \text{where $([ \alpha \mathbin{:} \beta], [\gamma \mathbin{:} \delta]) \in U$,}
  \]
  we obtain the $S$-linear map $\mathbf{F}$. Together \cite{SS23}*{Remark~2.7}
  and Proposition~\ref{p:dim} demonstrate that
  $\dim (\AA^{\!m} \times \AA^{\!n} \times U) = m+n+2 =
  \smash{\binom{n+e-1}{e-1}} + n + 1 = N_{\lambda} + 2$, which equals the
  dimension of the tangent space to
  $\Quot^{\kern0.5pt p_{\lambda}} \kern-0.5pt (\mathcal{O}_{\PP^n}^{\! 2})$ at
  the lexicographic point. Hence, the product
  $\AA^{\!m} \times \AA^{\!n} \times U$ is identified with an open subset in
  the Quot scheme
  $\Quot^{\kern0.5pt p_{\lambda}} \kern-0.5pt (\mathcal{O}_{\PP^n}^{\! 2})$.
    
  To finish, we relate this family to Borel-fixed points on
  $\Quot^{\kern0.5pt p_{\lambda}} \kern-0.5pt (\mathcal{O}_{\PP^n}^{2})$. This
  scheme has at least two Borel-fixed points corresponding to the submodules
  $S^1 \oplus L(\lambda) \subseteq S^2$ and $L(1) \oplus L(\mu) \subseteq S^2$
  where $L(1) = \langle x_0, x_1, \dotsc, x_{n-1} \rangle$ and
  $L(\mu) = \langle x_{0}^{e-1} \rangle$; also see Lemma~\ref{l:2b}. When
  $( \dotsc, z_{\bm{u}}, \dotsc)= \bm{0} \in \AA^{\!m}$,
  $(y_0, y_1, \dotsc, y_{n-1}) = \bm{0} \in \AA^{\!n}$,
  $[\alpha \mathbin{:} \beta] = [1 \mathbin{:} 0] \in \PP^1$, and
  $[\gamma \mathbin{:} \delta] = [0 \mathbin{:} 1] \in \PP^1$, we see
  straightaway that submodule $L(1) \oplus L(\mu) \subseteq S^2$ belongs to
  this family. On the other hand, let
  $( \dotsc, z_{\bm{u}}, \dotsc)= \bm{0} \in \AA^{\!m}$,
  $(y_0, y_1, \dotsc, y_{n-1}) = (y_0,0,0 \dotsc, 0) \in \AA^{\!n}$ where
  $y_0 \neq 0$, $[\alpha \mathbin{:} \beta] = [1 \mathbin{:} \beta] \in \PP^1$
  where $\beta \neq 0$, and
  $[\gamma \mathbin{:} \delta] = [1 \mathbin{:} \beta (y_{0}^{e} +1)] \in
  \PP^1$.  In this situation, one verifies that the columns in the matrix
  \[
    \setcounter{MaxMatrixCols}{11}
    \begin{bsmallmatrix}
      0 & 0 & \dotsb & 0 & 0 & x_n^{e-1} 
      & x_{n-1}^{} & x_{n-2}^{} & \dotsb & x_1^{} & x_0^{} - y_0^{} \, x_n^{}  \\
      x_0^{e-1} x_{n-1}^{} & x_0^{e-1} x_{n-2}^{} & \dotsb & x_0^{e-1} x_1^{} 
      & x_0^{e} \!-\! y_0^{} \, x_0^{e-1} x_{n\vphantom{1}}^{} 
      & \beta y_{0}^{} \, x_0^{e-1} \!-\! \beta \, x_{n\vphantom{1}}^{e-1}
      &  \beta \, x_{n-1}^{} & \beta \, x_{n-2}^{} & \dotsb & \beta \,
      x_{1}^{}
      & \beta (x_0^{} \!-\! y_0^{} \, x_n^{}) \\
    \end{bsmallmatrix}
  \]
  form a Gröbner basis (where the monomial order has position over term with
  position down and the graded reverse lexicographic order on terms) for the
  image of $\mathbf{F}$.  Remark~\ref{r:gen} establishes that
  $L(\lambda) = \langle x_{0}^{e}, x_{0}^{e-1} x_{1}^{}, x_{0}^{e-1} x_{2}^{},
  \dotsc, x_{0}^{e-1} x_{n-1}^{} \rangle$.  Letting $y_0 \to 0$,
  $\beta \to 0$, and saturating with respect to the irrelevant ideal
  $\langle x_0, x_1, \dotsc, x_n \rangle$, we see that the submodule
  $S^1 \oplus L(\lambda) \subseteq S^2$ is a flat limit of this family.  Thus,
  the lexicographic point on
  $\Quot^{\kern0.5pt p_{\lambda}} \kern-0.5pt (\mathcal{O}_{\PP^n}^{2})$ lies
  in the closure of the family.  We conclude that $\mathbf{F}$ defines a
  full-dimensional family on the lexicographic component of
  $\Quot^{\kern0.5pt p_{\lambda}} \kern-0.5pt (\mathcal{O}_{\PP^n}^{2})$.
\end{proof}

\section{Some Recognizable Smooth Quot Schemes}
\label{s:Gr}

\noindent
Unlike Hilbert schemes, surprisingly few smooth
$\Quot^{\kern0.5pt q} \kern-0.5pt (\mathcal{O}_{\PP^n}^{\! r})$ can already be
found in the literature; compare with \cite{SS23}*{p.~284}.  To compensate for
this shortcoming, this section showcases three countable collections of
nonsingular Quot schemes by identifying them with other parameter
spaces. Throughout this section, fix a base scheme $Y$ and let $X$ denote a
scheme over $Y$. Given integers $r$ and $s$, the \emph{Grassmannian}
$\Gr(s, r)$ is the projective scheme over $Y$ whose $X$-valued points are
isomorphism classes of locally free $\mathcal{O}_{X}$-module quotients of
$\mathcal{O}_{X}^{\! r}$ having rank $s$; see \cite{GD71}*{\S\S9.7}.

When the integer partition is empty, the Quot schemes are well-known smooth
varieties.

\begin{proposition}
  \label{p:gr} 
  For all integers $r$ and $s$ satisfying $0 \leqslant s \leqslant r$, we have
  $\smash{\Quot^{\kern0.5pt s \binom{t+n}{n}} \kern-0.5pt
    (\mathcal{O}_{\PP^n}^{\!r})} \cong \Gr(s, r)$.
\end{proposition}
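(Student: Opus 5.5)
We will construct mutually inverse natural transformations between the functors of points. Given a $Y$-scheme $X$ and a point of $\Gr(s,r)$, that is, a locally free quotient $\mathcal{O}_X^{\!r} \to \mathcal{G}$ of rank $s$, we pull back along the projection $\pi \colon \PP^n_X \to X$. This gives a quotient $\mathcal{O}_{\PP^n_X}^{\!r} \to \pi^*\mathcal{G}$. Since $\pi^*\mathcal{G}$ is locally free, it is flat over $X$. On each fibre it is isomorphic to $\mathcal{O}_{\PP^n}^{\!s}$, so its Hilbert polynomial is $s\binom{t+n}{n}$. This defines a morphism $\Gr(s,r) \to \Quot^{\kern0.5pt s\binom{t+n}{n}}(\mathcal{O}_{\PP^n}^{\!r})$. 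It is a monomorphism because the quotient is recovered by applying $\pi_*$: we have $\pi_*\pi^*\mathcal{G} = \mathcal{G}$, as $\pi_*\mathcal{O}_{\PP^n_X} = \mathcal{O}_X$ universally.

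For the inverse, start with a flat family of quotients $\mathcal{O}_{\PP^n_X}^{\!r} \to \mathcal{F}$ whose fibres have Hilbert polynomial $s\binom{t+n}{n}$, and let $\mathcal{K}$ be the kernel.

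The first step is fibrewise: over a field, every such quotient is trivial. By Proposition~\ref{p:nonempty} and the discussion after it, a quotient $\mathcal{F}$ with Hilbert polynomial $q = s\binom{t+n}{n} + p_\lambda$ determines $(s,\lambda)$ uniquely, and here $\lambda = ()$. To see that $\mathcal{F}$ itself is forced, pass to the generic initial submodule, as in the proof of Proposition~\ref{p:nonempty}. The monomial ideals $I_{(j)}$ must have Hilbert polynomials summing to $s\binom{t+n}{n}$. A nonzero proper ideal contributes some nonempty $p_{\lambda(j)}$, and these can never cancel. So exactly $s$ of the ideals are $0$ and the rest are the unit ideal, up to saturation. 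Hence $K$ agrees in high degrees with a module whose quotient is free of rank $s$, generated in degree $0$. It follows that $\mathcal{K}$ is a subsheaf with $\mathcal{F}$ torsion-free of rank $s$ and $\chi(\mathcal{F}(t)) = \chi(\mathcal{O}^s(t))$.

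The cleaner route is to use global sections. We have $H^0(\mathcal{F}) \supseteq$ the image of $\kk^r$, and $\mathcal{F}$ is globally generated by $r$ sections. Choose $s$ general sections to get a map $\mathcal{O}^s \to \mathcal{F}$. It is generically an isomorphism, since $\mathcal{F}$ is generically generated by the images of the $\bm{e}_j$ and has rank $s$. The cokernel has Hilbert polynomial $0$, so the map is surjective. Its kernel is a subsheaf of $\mathcal{O}^s$ with Hilbert polynomial $0$, hence zero. Therefore $\mathcal{F} \cong \mathcal{O}^s$, and $\mathcal{O}^r \to \mathcal{F}$ is given by a surjective constant $s \times r$ matrix, because maps $\mathcal{O} \to \mathcal{O}$ are scalars. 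In particular $h^i(\mathcal{F}(t)) = 0$ for $i > 0$ and $t \geqslant 0$, and $h^0(\mathcal{F}) = s$.

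The second step globalizes over $X$. By cohomology and base change, the vanishing of $H^1$ on every fibre and constancy of $h^0 = s$ imply that $\mathcal{G} := \pi_*\mathcal{F}$ is locally free of rank $s$ and commutes with base change. The map $\mathcal{O}_X^{\!r} = \pi_*\mathcal{O}^r \to \pi_*\mathcal{F}$ is surjective on fibres, hence surjective by Nakayama. This gives a point of $\Gr(s,r)$. The adjunction map $\pi^*\pi_*\mathcal{F} \to \mathcal{F}$ is an isomorphism on every fibre by the first step. Both sides are flat over $X$, so it is an isomorphism, and it is compatible with the quotient maps from $\mathcal{O}^r$. The two constructions are therefore inverse, and the functors are isomorphic.

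The main obstacle is the fibrewise rigidity in the first step, namely that a quotient of $\mathcal{O}^r$ with Hilbert polynomial $s\binom{t+n}{n}$ is $\mathcal{O}^s$. I would settle it by the general-sections argument above, checking carefully that the Hilbert-polynomial-zero kernel vanishes, which holds because $\mathcal{O}^s$ has no nonzero subsheaves of dimension $<0$. The passage to $X$ is standard base change.
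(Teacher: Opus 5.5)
Your proof is correct, and it takes a different route from the paper's. The paper proves nothing fibrewise. It twists by $m \gg 0$, and relative Serre vanishing plus base change show that $\pi_{*}(\mathcal{F}(m))$ is locally free of rank $s\binom{m+n}{n}$. It then invokes the projection formula, $\pi_{*}(\mathcal{F}(m)) \cong \pi_{*}\mathcal{F} \otimes \operatorname{Sym}^{m}(\mathcal{O}_X^{n+1})$, and faithful flatness to conclude that $\pi_{*}\mathcal{F}$ is a rank-$s$ locally free quotient of $\mathcal{O}_X^{r}$. From this it declares every point a pullback. That route is shorter, but the displayed isomorphism is not a formal consequence of the projection formula, because $\mathcal{O}(m)$ is not pulled back from $X$. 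It holds once one knows $\mathcal{F} \cong \pi^{*}\pi_{*}\mathcal{F}$, and it fails, for example, for the structure sheaf of a point. The paper also does not separately justify the surjectivity of $\mathcal{O}_X^{r} \to \pi_{*}\mathcal{F}$ or the isomorphism $\pi^{*}\pi_{*}\mathcal{F} \to \mathcal{F}$. Your approach supplies exactly these ingredients. The real geometric content is your rigidity statement over a field: every quotient of $\mathcal{O}^{r}$ with Hilbert polynomial $s\binom{t+n}{n}$ is $\mathcal{O}^{s}$, with a constant rank-$s$ matrix as quotient map. After that, the rest is standard:
\begin{itemize}
\item cohomology and base change in degree $0$, with no twist needed since $H^{1}$ already vanishes on fibres;
\item Nakayama for surjectivity;
\item the fibrewise criterion for the adjunction map, using flatness of $\mathcal{F}$.
\end{itemize}
Your argument costs one extra lemma, but it makes the identification with Grassmannian points complete.

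Your first step needs three small repairs.
\begin{itemize}
\item Drop the initial-submodule paragraph. Equality of Hilbert functions with $\operatorname{in}(K)$ does not show that $\mathcal{F}$ is torsion-free, and the global-sections argument makes it unnecessary anyway.
\item In the global-sections argument, reverse the order of deductions. For the kernel $\mathcal{N}$ and cokernel $\mathcal{C}$ of $\mathcal{O}^{s} \to \mathcal{F}$, additivity of $\chi$ only gives $\chi(\mathcal{C}(t)) = \chi(\mathcal{N}(t))$, so you cannot start from the cokernel. First observe that $\mathcal{N}$ vanishes at the generic point. It is therefore zero because $\mathcal{O}^{s}$ is torsion-free: it has no nonzero subsheaves of dimension less than $n$ (not ``less than $0$''). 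Only then does $\mathcal{C}$ have Hilbert polynomial $0$, and hence vanish.
\item Over a finite residue field, replace ``$s$ general sections'' by $s$ of the images of $\bm{e}_1, \dotsc, \bm{e}_r$ that form a basis of the generic stalk of $\mathcal{F}$.
\end{itemize}
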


\begin{proof} 
  Set $q(t) \coloneqq \binom{t+n}{n} \in \QQ[t]$. There exists a morphism
  $\psi \colon \Gr(s,r) \to \Quot^{\kern0.5pt s \, q} \kern-0.5pt
  (\mathcal{O}_{\PP^n}^{\!r})$ defined by sending an $X$\nobreakdash-valued
  point $\mathcal{O}_{X}^{\! r} \to \mathcal{G}$ to the pullback
  $\smash{\mathcal{O}_{\PP^{n}_{X}}^{\! r}} \cong
  \pi_{X}^{*}(\mathcal{O}_{X}^{\! r}) \to \pi_{X}^*(\mathcal{G})$, where
  $\pi_X \colon \mathbb{P}^n_X \to X$ is the structure map. Conversely, an
  $X$-valued point of
  $\Quot^{\kern0.5pt s \, q} \kern-0.5pt (\mathcal{O}_{\PP^n}^{\!r})$ is a
  quotient $\smash{\mathcal{O}_{\PP^{n}_{X}}^{\!r}} \to \mathcal{F}$, where
  $\mathcal{F}$ is flat over $X$ and the Hilbert polynomial on each fibre
  equals $s \, q(t)$.  For any sufficiently large integer $m$, the relative
  version of Serre's Vanishing Theorem shows that
  $R^i \kern-0.5pt ( \kern-0.5pt \pi_{X} \kern-0.75pt )^{}_{*} \kern-0.5pt
  \bigl( \mathcal{F}(m) \kern-1.0pt \bigr) = 0$ for all $i > 0$.  Hence, the
  base change theorem implies that
  $( \kern-0.5pt \pi_{X} \kern-0.5pt )^{}_{*} \kern-0.5pt \bigl(
  \mathcal{F}(m) \kern-1.0pt \bigr)$ is locally free of rank $s \, q(m)$; see
  \cite{Mum08}*{Corollary~II.5.2}.  As
  $\PP^n_{X} = \operatorname{Proj}_{X}(\mathcal{O}_{X}^{n+1})$, the projection
  formula establishes that
  \[
    ( \kern-0.5pt \pi_{X} \kern-0.5pt )^{}_{*} \kern-0.5pt \bigl( \mathcal{F}(m) \kern-1.0pt \bigr) 
    \cong ( \kern-0.5pt \pi_{X} \kern-0.5pt )^{}_{*} (\mathcal{F}) \otimes \operatorname{Sym}^{m}(\mathcal{O}_{X}^{n+1}) 
    \cong ( \kern-0.5pt \pi_{X} \kern-0.5pt )^{}_{*} (\mathcal{F}) \otimes \mathcal{O}_{X}^{q(m)} \, . 
  \]
  Since the direct sum $\smash{\mathcal{O}_{X}^{q(m)}}$ is a faithfully flat
  sheaf, we deduce that the direct image
  $( \kern-0.5pt \pi_X \kern-0.5pt )^{}_{*}(\mathcal{F})$ is a locally-free
  quotient of $\mathcal{O}_{X}^{\! r}$ of rank $s$.  We conclude that every
  $X$-valued point of
  $\Quot^{\kern0.5pt s \, q} \kern-0.5pt (\mathcal{O}_{\PP^n}^{\!r})$ is the
  pullback of an $X$-point in $\Gr(s,r)$, so $\psi$ is an isomorphism.
\end{proof}

Expanding on this idea produces closed immersions that relate Grassmannians,
Hilbert schemes, and Quot schemes; compare with
\cite{Ser06}*{Proposition~4.6.2}.

\begin{lemma} 
  \label{l:morp}
  Let $k$ and $r$ be integers satisfying $1 \leqslant k \leqslant r$. For any
  nonzero polynomial $q \in \QQ[t]$, there exists a closed immersion
  \[
    \varphi \colon \Gr(k, r) \mathbin{\times} \Hilb^{\kern0.5pt q} \kern-0.5pt (\PP^n) 
    \to \Quot^{\kern0.5pt k \, q} \kern-0.5pt (\mathcal{O}_{\PP^{n}}^{\! r})
  \]  
  that sends an $X$-valued point
  $(\mathcal{O}_{X}^{\! r} \to \mathcal{G} \kern-1.0pt, \, \kern0.5pt Z
  \hookrightarrow \PP^{n}_{X} )$ in the source to the composition of the
  morphisms
  $\smash{\mathcal{O}_{\PP^{n}_{X}}^{\! r}} \cong
  \pi_{X}^{*}(\mathcal{O}_{X}^{\! r}) \to \pi_{X}^*(\mathcal{G})$ and
  $\pi_{X}^*(\mathcal{G}) \to \pi_{X}^{*}(\mathcal{G})
  \mathbin{\smash{\otimes_{\mathcal{O}_{\PP^n_{X}}}}} \kern-2.0pt
  \mathcal{O}_{Z}^{}$, where $\mathcal{O}_{Z}^{}$ denotes the structure sheaf
  on the closed subscheme $Z$ in $\PP^n_{X}$ and
  $\pi_{X} \colon \PP^{n}_{X} \to X$ is the structure map.
\end{lemma}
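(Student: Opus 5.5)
Three things need to be checked: that $\varphi$ is a well-defined morphism of functors, that it is a monomorphism (injective on $X$-points), and that it is proper. For the first, fix an $X$-valued point $(\mathcal{O}_{X}^{\! r} \to \mathcal{G}, \, Z \hookrightarrow \PP^n_X)$. The composite $\mathcal{O}_{\PP^n_X}^{\! r} \to \pi_X^*(\mathcal{G}) \to \pi_X^*(\mathcal{G}) \otimes \mathcal{O}_Z$ is surjective because each factor is. Since $\mathcal{G}$ is locally free of rank $k$, locally on $X$ the target is isomorphic to $\mathcal{O}_Z^{\oplus k}$. Hence it is flat over $X$, because $\mathcal{O}_Z$ is, and on each fibre its Hilbert polynomial is $k\,q$. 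Both constructions commute with base change along $X' \to X$, so this is a natural transformation of functors and, by Yoneda, a morphism of schemes.

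For properness I would argue directly: the source is projective over $Y$ (a product of a Grassmannian and a Hilbert scheme) and the target is separated over $Y$, so $\varphi$ is proper. A proper monomorphism is a closed immersion (EGA~IV, 18.12.6), so it remains to show that $\varphi$ is a monomorphism, i.e.\ that both $\mathcal{G}$ and $Z$ can be recovered from the quotient $\mathcal{F} \coloneqq \pi_X^*(\mathcal{G}) \otimes \mathcal{O}_Z$ together with the map from $\mathcal{O}_{\PP^n_X}^{\! r}$.

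To recover $Z$, I would check locally on $X$, where $\mathcal{F} \cong \mathcal{O}_Z^{\oplus k}$. There the annihilator ideal $\operatorname{Ann}(\mathcal{F})$ equals the ideal sheaf $\mathcal{I}_Z$, since $k \geqslant 1$. The annihilator is intrinsic to the quotient, so $Z$ is determined.

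To recover $\mathcal{G}$ as a quotient of $\mathcal{O}_X^{\! r}$, I would apply $(\pi_X)_*$. Here $(\pi_X)_*\mathcal{F} = \mathcal{G} \otimes (\pi_X)_*\mathcal{O}_Z$, and the induced map $\mathcal{O}_X^{\! r} \to (\pi_X)_*\mathcal{F}$ factors as $\mathcal{O}_X^{\! r} \to \mathcal{G} \to \mathcal{G} \otimes (\pi_X)_*\mathcal{O}_Z$. The second arrow is the unit $\mathcal{O}_X \to (\pi_X)_*\mathcal{O}_Z$ tensored with $\mathcal{G}$. Consequently, the kernel of $\mathcal{O}_X^{\! r} \to (\pi_X)_*\mathcal{F}$ equals the kernel of $\mathcal{O}_X^{\! r} \to \mathcal{G}$, and $\mathcal{G}$ is recovered as the image, provided this unit is injective.

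I expect this injectivity, in the generality of arbitrary base $X$, to be the main obstacle. Injectivity of $\mathcal{O}_X \to (\pi_X)_*\mathcal{O}_Z$ is local on $X$ and can be checked after restricting to an affine open of $X$. The plan is to reduce it to the fact that $Z \to X$ is flat with nonempty fibres ($q \neq 0$): a flat, surjective, locally finitely presented morphism is faithfully flat, so $\mathcal{O}_X(U) \to \mathcal{O}_Z(\pi^{-1}U)$ is injective. Concretely, for affine $U$, any $a \in \mathcal{O}_X(U)$ killed in $\mathcal{O}_Z$ is killed in some affine chart $\operatorname{Spec} B$ of $Z$ that is faithfully flat over the corresponding open, whence $a = 0$. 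With both pieces of data recovered, $\varphi$ is a monomorphism, and properness then gives the closed immersion.
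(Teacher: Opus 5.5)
Your proposal is correct and follows essentially the same route as the paper. The paper also checks that the composite is a flat quotient with Hilbert polynomial $k\,q$. It then shows $\varphi$ is a monomorphism by recovering $Z$ locally and recovering the Grassmannian quotient from degree-zero data, and finishes with ``proper monomorphism implies closed immersion.'' Your annihilator argument makes precise the paper's ``common trivialization'' step. Your use of $(\pi_X)_*$ with the projection formula is the sheaf-theoretic version of the paper's graded degree-zero argument, and it rests on the same key input: $Z$ is flat with nonempty fibres over $X$. One point to tidy: in the faithful flatness step, a single affine chart of $Z$ need not surject onto $U$. Instead, take a finite affine cover $\{\Spec B_i\}$ of $Z \cap \pi_X^{-1}(U)$ and use that $\mathcal{O}_X(U) \to \prod_i B_i$ is faithfully flat, hence injective.
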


\begin{proof} 
  Consider a closed subscheme $Z$ in $\PP^n_{X}$ with Hilbert polynomial
  $q$. For any locally-free quotient $\mathcal{O}_{X}^{\! r} \to \mathcal{G}$
  having rank $k$, the pullback $\pi_X^{*}(\mathcal{G})$ is a locally-free
  quotient of
  $\smash{\mathcal{O}_{\PP^{n}_{X}}^{\! r}} \cong
  \pi_{X}^{*}(\mathcal{O}_{X}^{\! r})$ also having rank $k$. Tensoring the
  canonical surjection $\mathcal{O}_{\PP^n} \to \mathcal{O}_{Z}$ with
  $\pi_X^{*}(\mathcal{G})$ gives the surjection
  $\pi_{X}^*(\mathcal{G}) \to \pi_{X}^{*}(\mathcal{G}) \otimes
  \mathcal{O}_{Z}$.  Since $\pi_X^{*}(\mathcal{G})$ is locally free of rank
  $k$, the Hilbert polynomial of the coherent sheaf
  $\pi_{X}^{*}(\mathcal{G}) \otimes \mathcal{O}_{Z}$ equals $k \, q$.  Hence,
  this composition operation defines a map into
  $\Quot^{\kern0.5pt k \, q} \kern-0.5pt (\mathcal{O}_{\PP^{n}}^{\! r})$.

  We claim that the morphism $\varphi$ is injective on all $X$-valued
  points. By Proposition~\ref{p:gr}, we may replace $\Gr(k,r)$ with
  $\smash{\Quot^{\kern0.5pt k \binom{t+n}{n}} \kern-0.5pt
    (\mathcal{O}_{\PP^n}^{\!r})}$. Consider an $X$-valued point
  $\smash{\mathcal{O}_{\PP^{n}_{X}}^{\! r}} \to \mathcal{F}$ of this Quot
  scheme and an $X$-valued point
  $\mathcal{O}_{\PP^{n}_{X}} \to \mathcal{O}_{Z}$ of the Hilbert scheme
  $\Hilb^{\kern0.5pt q} \kern-0.5pt (\PP^n)$. The morphism $\varphi$ sends
  this pair to the quotient
  $\smash{\mathcal{O}_{\PP^{n}_{X}}^{\! r}} \to \mathcal{F} \otimes
  \mathcal{O}_{Z}$. Since $\mathcal{F}$ is locally-free on $\PP^{n}_{X}$,
  there exists an open cover $\{ U_{j} \}_{j \in J}$ of $\PP^{n}_{X}$ that
  trivializes $\mathcal{F}$.  Over each such open immersion
  $\iota_{\kern-0.5pt j} \colon U_{j} \to \PP^{n}_{X}$, we have
  \[
    \iota^{\!*}_{\kern-0.5pt j} ( \mathcal{F} \otimes \mathcal{O}_{Z})
    = \bigoplus_{i=1}^{k} \iota^{\!*}_{\kern-0.5pt j}(\mathcal{O}_{Z})
    = \bigoplus_{i=1}^{k} \mathcal{O}_{Z \cap U_{j}} \, . 
  \]
  Given pairs $(\mathcal{F}, \mathcal{O}_{Z})$ and
  $(\mathcal{F}', \mathcal{O}_{Z'})$ of $X$-valued points that have the same
  image under $\varphi$, the pullback to a common trivialization for
  $\mathcal{F}$ and $\mathcal{F}'$ shows that the closed subschemes $Z$ and
  $Z'$ are equal on an open covering of $\PP^{n}_{X}$. It follows that
  $Z = Z'$.
    
  To see that $\mathcal{F} = \mathcal{F}'$, consider an arbitrary affine open
  subset $V \coloneqq \Spec(R) \subseteq X$. Working over the homogeneous
  coordinate ring $S \coloneqq R[x_0, x_1, \dotsc, x_n]$ of $\PP^{n}_{V}$,
  there is a commutative diagram
  \[
    \begin{tikzcd}
      \bigoplus _{i=1}^r S \ar[r, "\sigma"] \ar[d, "\sigma'" left] 
      & F \ar[d, "\tau"] \\
      F' \ar[r, "\tau'"] 
      & F \otimes_S S/I \, ,
    \end{tikzcd}
  \]
  of $\ZZ$-graded $S$-modules, where the maps $\sigma$ and $\sigma'$ are
  surjective, the sheaves associated to $F$ and $F'$ are the restrictions of
  $\mathcal{F}$ and $\mathcal{F'}$ to $\PP^{n}_{V}$, and the homogeneous ideal
  $I$ in $S$ corresponds to the restriction of $Z$ to $\PP^{n}_{V}$. The
  assumption on the pairs $(\mathcal{F}, \mathcal{O}_{Z})$ and
  $(\mathcal{F}', \mathcal{O}_{Z'})$ ensures that
  $F \otimes_S S/I \cong F' \otimes_S S/I$. Because $Z$ is flat and nonempty
  over $X$, neither $\tau$ nor $\tau'$ have any nonzero elements of degree
  zero in their kernels. Hence, the equality
  $\tau \circ \sigma = \tau' \circ \sigma'$ implies that the degree-zero
  elements in the kernels of $\sigma$ and $\sigma'$ are equal. We deduce that
  $\mathcal{F}$ and $\mathcal{F}'$ are the same quotient sheaf of
  $\mathcal{O}^{\! r}_{\PP^{n}_{X}}$.

  The previous two paragraphs demonstrate that $\varphi$ is a
  monomorphism. Since the schemes $\Gr(k,r)$,
  $\Hilb^{\kern0.5pt q} \kern-0.5pt (\PP^n)$, and
  $\Quot^{\kern0.5pt k \, q} \kern-0.5pt (\mathcal{O}_{\PP^{n}}^{\! r})$ are
  all projective, the map $\varphi$ is proper. It follows that $\varphi$ is a
  closed immersion; see \cite{Gro66}*{Proposition~8.11.5} or
  \cite{SP24}*{\href{https://stacks.math.columbia.edu/tag/04XV}{Tag~04XV}}.
\end{proof}

\begin{example} 
  Suppose that $k = 1$ and $q(t) = 1 \in \QQ[t]$. A single point is the only
  closed subscheme of $\PP^{n}$ with Hilbert polynomial $1$, so
  $\Hilb^{\kern0.5pt 1} \kern-0.5pt (\PP^n) = \PP^n$. As
  $\Gr(1, r) = \PP^{r-1}$, Example~\ref{e:1pt} shows that the closed immersion
  $\varphi \colon \PP^{r-1} \mathbin{\times} \PP^{n} \to \Quot^{\kern0.5pt 1}
  \kern-0.5pt (\mathcal{O}_{\PP^{n}}^{\! r})$ is an isomorphism.
\end{example}

\begin{example}
  When $q(t) \coloneqq \binom{t+n}{n} \in \QQ[t]$, the scheme
  $\operatorname{Hilb}^q(\PP^n)$ is just a single point. Hence,
  Proposition~\ref{p:gr} shows that the closed immersion
  $\varphi \colon \!\Gr(k,r) \to \Quot^{\kern0.5pt k \, q} \kern-0.5pt
  (\mathcal{O}_{\PP^{n}}^{\! r})$ is the isomorphism.
\end{example}

\begin{remark} 
  One may further generalize Lemma~\ref{l:morp} by replacing the Hilbert
  scheme with a Quot scheme.  For any coherent sheaf $\mathcal{E}$ on
  $\PP^{n}$, essentially the same operation defines a morphism
  $\varphi \colon \Gr(k, r) \mathbin{\times} \Quot^{\kern0.5pt q} \kern-0.5pt
  (\mathcal{E}) \to \Quot^{\kern0.5pt k \, q} \kern-0.5pt (
  \mathcal{O}_{\PP^n}^{\! r} \otimes \mathcal{E})$.
\end{remark}

As a counterpart to $\lambda = ()$, we recognize many Quot schemes as products
when $s = 0$. More precisely, we claim that an absence of parts equal to $1$
in the integer partition $\lambda$ guarantees that the scheme
$\Quot^{p_{\lambda}}(\mathcal{O}_{\PP^n}^{\! r})$ is a trivial projective
bundle over the Hilbert scheme
$\Hilb^{\kern0.5pt p_{\lambda}} \kern-0.5pt (\PP^n)$. Equivalently, the
following theorem shows that the map
$\varphi \colon \PP^{r-1} \times \Hilb^{\kern0.5pt p_{\lambda}} \kern-0.5pt
(\PP^n) \to \Quot^{\kern0.5pt p_{\lambda}} \kern-0.5pt
(\mathcal{O}_{\PP^n}^{\! r})$, arising from Lemma~\ref{l:morp} when $k = 1$,
is an isomorphism under this hypothesis.

\begin{theorem}
  \label{t:prod}
  For any integer partition
  $\lambda \coloneqq (\lambda_1, \lambda_2, \dotsc, \lambda_{e})$ having no
  parts equal to $1$, we have
  \[ 
    \PP^{r-1} \times \Hilb^{\kern0.5pt p_{\lambda}} \kern-0.5pt (\PP^n) 
    \cong \Quot^{\kern0.5pt p_{\lambda}} \kern-0.5pt (\mathcal{O}_{\PP^n}^{\! r}) \, .
  \]
\end{theorem}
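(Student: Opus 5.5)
We will prove that the closed immersion $\varphi \colon \PP^{r-1} \times \Hilb^{\kern0.5pt p_{\lambda}} \kern-0.5pt (\PP^n) \to \Quot^{\kern0.5pt p_{\lambda}} \kern-0.5pt (\mathcal{O}_{\PP^n}^{\! r})$ from Lemma~\ref{l:morp} with $k = 1$ is surjective on $X$-valued points. Equivalently, it suffices to show that $\varphi$ induces a bijection on $X$-points for every scheme $X$. Injectivity is already contained in Lemma~\ref{l:morp}. So the work is to show that every flat family of quotients $\mathcal{O}_{\PP^n_X}^{\! r} \to \mathcal{F}$ with Hilbert polynomial $p_{\lambda}$ on each fibre factors as $\mathcal{O}^{\! r} \to \pi_X^*(\mathcal{L}) \to \pi_X^*(\mathcal{L}) \otimes \mathcal{O}_Z$. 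Here $\mathcal{L}$ is a line bundle quotient of $\mathcal{O}_X^{\! r}$ and $Z \subseteq \PP^n_X$ is flat with Hilbert polynomial $p_{\lambda}$.

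\textbf{Step 1: the statement over a field.} Over a field $\kk$, we first show that every quotient $\mathcal{F}$ of $\mathcal{O}_{\PP^n}^{\! r}$ with Hilbert polynomial $p_{\lambda}$, where $\lambda_e \geqslant 2$, is a quotient of a single copy of $\mathcal{O}_{\PP^n}$ through a rank one quotient $\kk^r \to \kk$. The plan has two parts.
\begin{compactitem}[$\bullet$]
\item Consider the image $V$ of $\kk^r \to H^0(\mathcal{F})$. By degenerating to a Borel-fixed point via the generic initial submodule \cite{Par96}*{Proposition~4}, and using Corollary~\ref{c:!b}, the only Borel-fixed point is the lexicographic one, namely $S^{r-1} \oplus L(\lambda)$. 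By Lemma~\ref{l:h^0} with $a_1 = 0$, we get $h^0 = 1$ for that point.
\item Upper semicontinuity of $h^0$ is the wrong direction for concluding $h^0(\mathcal{F}) = 1$ directly. Instead we argue combinatorially. Suppose the image of $\kk^r$ in $\mathcal{F}$ had dimension at least $2$. Then we could choose two sections generating subsheaves $\mathcal{F}_1 \subseteq \mathcal{F}$ and $\mathcal{F}/\mathcal{F}_1$, both nonzero cyclic quotients of $\mathcal{O}$. Their Hilbert polynomials $p_{\mu}$ and $p_{\nu}$ would satisfy $p_{\lambda} = p_{\mu} + p_{\nu}$, since quotients of $\mathcal{O}_{\PP^n}$ have polynomials $p_\mu$ by Macaulay's theorem. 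This contradicts Lemma~\ref{l:sum}, because $\lambda$ has no part equal to $1$.
\end{compactitem}
Hence the map $\kk^r \to H^0(\mathcal{F})$ has rank one, so $\mathcal{F} \cong \mathcal{O}_Z$ for a closed subscheme $Z$ with Hilbert polynomial $p_\lambda$, and the quotient factors as claimed. In particular $\varphi$ is bijective on $\kk$-points.

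\textbf{Step 2: families.} For a flat family over an affine base $X = \Spec(R)$, we must produce the line bundle quotient $\mathcal{O}_X^{\! r} \to \mathcal{L}$. Our plan is to take $\mathcal{L}$ as the image of $\mathcal{O}_X^{\! r} \to (\pi_X)_*\mathcal{F}$ and show that it is locally free of rank one and compatible with base change. This is the main obstacle, since $(\pi_X)_*\mathcal{F}$ need not commute with base change a priori.

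What we would try first is to apply Step 1 to fibres to get $h^0(\mathcal{F}_x)$ data, then use cohomology and base change. An alternative that avoids $h^0$ is to show that the kernel $\mathcal{K}$ contains the pullback of a rank $r-1$ subbundle of $\mathcal{O}_X^{\! r}$. Concretely:
\begin{compactitem}[$\bullet$]
\item For $m \gg 0$, consider $(\pi_X)_*\mathcal{K}(m) \subseteq \mathcal{O}_X^{\! r} \otimes S_m$, which is locally free with formation commuting with base change.
\item The degree $0$ part of the saturated kernel is the intersection $\{v \in R^r : v \otimes S_m \subseteq \mathcal{K}_m\}$. This is a determinantal condition, and Step 1 says that on each fibre it cuts out a rank $r-1$ subspace.
\item By Nakayama and the constancy of fibre rank on a reduced base, this gives a subbundle of rank $r-1$.
\end{compactitem}
For nonreduced bases, we would instead invoke the closed immersion property: $\varphi$ is a closed immersion that is bijective on points, so it suffices to compare tangent spaces. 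It then remains to show $\varphi$ is an isomorphism by checking that $\Quot$ is reduced, or that the tangent map is surjective.

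\textbf{Step 3: tangent space comparison.} At a point $(\ell, Z)$, the tangent space to $\Quot$ is $\Hom(\mathcal{K}, \mathcal{O}_Z)$, where $\mathcal{K} = \mathcal{O}^{r-1} \oplus \mathcal{I}_Z$ after a change of basis. This gives
\[
  \Hom(\mathcal{K}, \mathcal{O}_Z) = H^0(\mathcal{O}_Z)^{r-1} \oplus \Hom(\mathcal{I}_Z, \mathcal{O}_Z),
\]
whose dimension is $(r-1)h^0(\mathcal{O}_Z) + \dim T_Z\Hilb$. So the comparison reduces to showing $h^0(\mathcal{O}_Z) = 1$ for every $Z$ with Hilbert polynomial $p_\lambda$ when $\lambda_e \geqslant 2$.

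This follows from the same Lemma~\ref{l:sum} argument. If $h^0(\mathcal{O}_Z) \geqslant 2$, choose a second section $\sigma$ of $\mathcal{O}_Z$ independent of $1$. The subsheaf generated by $\sigma$ and the corresponding quotient have Hilbert polynomials that would decompose $p_\lambda$ into a nontrivial sum, which is impossible.

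Given $h^0(\mathcal{O}_Z) = 1$, the tangent map of the closed immersion $\varphi$ is an isomorphism at every point. A closed immersion that is bijective on points and an isomorphism on tangent spaces is an isomorphism. The most delicate point is ensuring that the decomposition from a second section genuinely yields cyclic quotients of $\mathcal{O}_{\PP^n}$, which is needed to apply Lemma~\ref{l:sum}.
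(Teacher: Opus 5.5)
There is a genuine gap at your last step. You conclude with the principle that a closed immersion which is bijective on points and induces isomorphisms on all Zariski tangent spaces is an isomorphism. That principle is false. For example, the coordinate axes $\Spec \kk[x,y]/\langle xy \rangle$ sit inside $\Spec \kk[x,y]/\langle x^2y, xy^2 \rangle$ with the same underlying space and the same tangent space at every point (both are $2$-dimensional at the origin), yet the second scheme has an embedded point. You mention reducedness of the Quot scheme as an alternative in Step~2, but you never establish it.

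The missing input is that the source $\PP^{r-1} \times \Hilb^{\kern0.5pt p_{\lambda}}(\PP^n)$ is smooth, i.e.\ that $\Hilb^{\kern0.5pt p_{\lambda}}(\PP^n)$ is smooth when $\lambda_e \geqslant 2$ (Corollary~\ref{c:easy} with $r=1$). With that fact your argument closes:
\begin{itemize}
\item by Step~1 the closed immersion $\varphi$ is surjective, hence a homeomorphism;
\item so at each point the Quot scheme has the local dimension of the smooth source, and by Step~3 the same tangent-space dimension;
\item hence it is regular, hence reduced, and a surjective closed immersion into a reduced scheme is an isomorphism.
\end{itemize}
Step~2 then becomes unnecessary, which helps, since its family argument is only sketched.

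Two smaller corrections.
\begin{itemize}
\item Upper semicontinuity is not ``the wrong direction.'' The limit of the generic-initial degeneration lies in the closure of the orbit of $\mathcal{F}$. Its saturation is the lexicographic submodule by Corollary~\ref{c:!b}. Hence $h^0(\mathcal{F}) \leqslant h^0\bigl(S^1 \mathbin{\!\!/\!} L(\lambda)\bigr) = 1$ by Lemma~\ref{l:h^0} with $a_1 = 0$. This gives both Step~1 and the equality $h^0(\mathcal{O}_Z) = 1$ of Step~3 at once.
\item If you keep the route through Lemma~\ref{l:sum}, pass to $\overline{\kk}$ and choose the first section so that it does not generate everything. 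For instance, take a nonzero nonunit of $H^0(\mathcal{O}_Z)$; a unit such as $1+\epsilon$ gives a zero quotient. Also, in Step~1 the quotient $\mathcal{F}/\mathcal{F}_1$ need not be cyclic. Proposition~\ref{p:nonempty} together with a degree count still makes its Hilbert polynomial some $p_\nu$ with $\nu$ nonempty.
\end{itemize}

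For comparison, the paper avoids all pointwise analysis. Corollary~\ref{c:easy} makes the Quot scheme smooth and irreducible, and Proposition~\ref{p:dim} gives its dimension $N_\lambda + r - 1$. The source is smooth and irreducible of the same dimension, and a closed immersion between integral schemes of equal dimension is an isomorphism. Once repaired, your route reaches the same conclusion at greater length. It does record an extra fact: every such quotient factors through a rank-one quotient of $\kk^r$ with $h^0(\mathcal{O}_Z)=1$.
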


\begin{proof}
  Corollary~\ref{c:easy} shows that
  $\Quot^{\kern0.5pt p_\lambda} \kern-0.5pt (\mathcal{O}_{\PP^n}^{\! r})$ is
  smooth and irreducible. Moreover, Corollary~\ref{p:dim} implies that this
  Quot scheme has dimension $N_{\lambda} + r-1$ where
  $N_{\lambda} \coloneqq \dim \, \Hilb^{\kern0.5pt p_{\lambda}} \kern-0.5pt
  (\PP^n)$.  Since $\Gr(1,r) = \PP^{r-1}$, Lemma~\ref{l:morp} with $k=1$ gives
  the closed immersion
  \[ 
    \varphi \colon \PP^{r-1} \times \Hilb^{\kern0.5pt p_{\lambda}} \kern-0.5pt (\PP^n)
    \to \Quot^{\kern0.5pt p_{\lambda}} \kern-0.5pt (\mathcal{O}_{\PP^n}^{\! r}) \, ,
  \]
  which must be an isomorphism.
\end{proof}

The third collection prescribes both the nonnegative integer $s$ and the
integer partition $\lambda$.

\begin{proposition} 
  \label{p:3rd}
  When $r = s+1$ and $\lambda \coloneqq (n^e)$, we have 
  $\Quot^{\kern0.5pt q} \kern-0.5pt (\mathcal{O}_{\PP^n}^{\! r}) 
  \cong \mathbb{P}^m$ where $m \coloneqq r \binom{n+e}{n} - 1$.
\end{proposition}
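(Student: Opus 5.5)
The plan is to identify the Quot scheme with the projective space of nonzero row vectors $[f_0 : f_1 : \dotsb : f_{r-1}]$ of degree-$e$ forms. Set $q(t) = s\binom{t+n}{n} + p_{(n^e)}(t)$. Since $p_{(n^e)}(t) = \sum_{i=1}^{e}\binom{t+n-i}{n-1} = \binom{t+n}{n} - \binom{t+n-e}{n}$, we get $q(t) = r\binom{t+n}{n} - \binom{t-e+n}{n}$. This is exactly the Hilbert polynomial of $\coker\bigl(\mathcal{O}_{\PP^n}(-e) \to \mathcal{O}_{\PP^n}^{\!r}\bigr)$ whenever that map is injective.

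First construct the morphism. On $\PP^m = \PP\bigl((S_e)^{r}\bigr)$, which has dimension $r\binom{n+e}{n}-1 = m$, there is a tautological map $\mathcal{O}_{\PP^n\times\PP^m}(-e,-1) \to \mathcal{O}^{\!r}$ given by $[f_0 : \dotsb : f_{r-1}]$. On each fibre this map is injective, since a nonzero column of forms defines an injective map $S(-e)\to S^r$. By the local criterion of flatness, its cokernel is therefore flat over $\PP^m$ with Hilbert polynomial $q$. This yields a morphism $\psi\colon \PP^m \to \Quot^{\kern0.5pt q}(\mathcal{O}_{\PP^n}^{\! r})$. The morphism is injective on points, because the kernel $\mathcal{O}(-e)\cdot(f_0,\dotsc,f_{r-1})$ determines the column up to scalar.

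Next use smoothness and dimension. Proposition~\ref{p:dim} with $a_n = e$, $a_1 = 0$ and $r-s-1 = 0$ gives tangent dimension $N_\lambda + s\binom{n+e}{n}$ at the lexicographic point. Here $N_{(n^e)} = \binom{n+e}{n}-1$, because $\Hilb^{p_{(n^e)}}(\PP^n)$ is the space of degree-$e$ hypersurfaces. The tangent dimension is therefore $r\binom{n+e}{n}-1 = m$. Since $\lambda_e = n \geqslant 2$ when $n \geqslant 2$, Corollary~\ref{c:easy} shows that the Quot scheme is smooth and irreducible. If $n = 1$, then $\lambda = (1^e)$, and I would instead argue directly as below.

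It remains to show $\psi$ is surjective and an isomorphism. For surjectivity, take any quotient $\mathcal{O}^{\!r}\to\mathcal{F}$ with kernel $\mathcal{K}$. Then $\mathcal{K}$ has rank $1$ and Hilbert polynomial $\binom{t-e+n}{n}$. Being a subsheaf of a locally free sheaf, $\mathcal{K}$ is torsion-free of rank one, hence $\mathcal{K}\cong \mathcal{I}_W(d)$ for a closed subscheme $W$ of codimension at least $2$. Its Hilbert polynomial is $\binom{t+d+n}{n}$ minus a polynomial of degree at most $n-2$. Comparing with $\binom{t-e+n}{n}$ in degree $n-1$ forces $d=-e$, and comparing the remaining terms forces the correction to vanish, so $W = \emptyset$. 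Hence $\mathcal{K}\cong\mathcal{O}(-e)$, and the inclusion is given by a nonzero column in $(S_e)^r$. This comparison is the main obstacle: one must handle the degree-$(n-1)$ coefficients carefully, and the case $n=1$, where torsion-free means locally free, is immediate.

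With surjectivity in hand, $\psi$ is a proper monomorphism: on $X$-points the kernel is a line subbundle twisted by $\mathcal{O}(-e)$, recovering the section up to a unit. It is therefore a closed immersion (\cite{SP24}*{\href{https://stacks.math.columbia.edu/tag/04XV}{Tag~04XV}}). It is also bijective, so the Quot scheme is irreducible of dimension $m$. The tangent dimension at the Borel-fixed point is $m$, and by the one-parameter families of \cite{Par96}*{Proposition~4} together with semicontinuity, the tangent dimension is at most $m$ everywhere. Hence the Quot scheme is smooth, in particular reduced, and a closed immersion of an integral $m$-dimensional scheme into it is an isomorphism.
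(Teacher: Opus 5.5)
Your proposal is correct and shares the paper's skeleton: the projective space of nonzero $r$-tuples of degree-$e$ forms maps to the Quot scheme by taking cokernels, and the count $N_\lambda + (r-1)\binom{n+e}{n} = m$ comes from Proposition~\ref{p:dim}, with smoothness and irreducibility from Corollary~\ref{c:easy}. Where you differ is the final step.

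The paper only observes that this family ``identifies $\PP^m$ with a subset'' and then lets smoothness, irreducibility and the dimension count conclude. It leaves implicit why an injective family onto a smooth irreducible $m$-dimensional scheme is an isomorphism; in positive characteristic a bijective morphism of smooth varieties need not be one (Frobenius).

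You close this with two extra ingredients.
\begin{enumerate}
\item A direct classification of all quotients. The kernel is torsion-free of rank one, hence $\mathcal{I}_W(d)$, and the Hilbert polynomial forces $d=-e$ and $W=\varnothing$. This gives surjectivity on points without invoking irreducibility.
\item The argument that a proper monomorphism is a closed immersion, as in Lemma~\ref{l:morp}. Reducedness of the smooth target then forces the surjective closed immersion to be an isomorphism.
\end{enumerate}

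For $n\geqslant 2$ the classification is not strictly needed: the closed $m$-dimensional image of $\PP^m$ must fill the irreducible $m$-dimensional Quot scheme. It is, however, what lets you treat $n=1$. There $\lambda=(1^e)$ has $\lambda_e=1$, so the paper's appeal to Corollary~\ref{c:easy} through $\lambda_e\geqslant 2$ does not literally apply.

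In that $n=1$ case, say explicitly that your semicontinuity step needs a unique Borel-fixed point. Your classification supplies this: the only Borel-fixed kernel is generated by $x_0^e\,\bm{e}_1$.
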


\begin{proof} 
  Consider a nonzero $(1 \times r)$-matrix $\mathbf{G}$ whose entries are
  homogeneous polynomials in $S$ of degree $e$. The cokernel of the
  homogeneous $S$-linear map from $S(-e)$ to $S^r$ defined by $\mathbf{G}$
  defines a point in
  $\Quot^{\kern0.5pt q} \kern-0.5pt (\mathcal{O}_{\PP^n}^{\! r})$, because
  \[
    q(t) 
    = (r-1) \binom{t+n}{n} + p_{\lambda}(t)
    = (r-1) \binom{t+n}{n} + \sum_{i=1}^{e} \binom{t+n-i}{n-1}
    = r \binom{t+n}{n} - \binom{t+n-e}{n} \, . 
  \]
  Moreover, multiplying $\mathbf{G}$ by a nonzero scalar does not change
  either its image in $S^r$ or the correspond point in the Quot scheme. Since
  $\dim_{\kk} S_e = \binom{n+e}{n}$ and $m \coloneqq r \binom{n+e}{n} - 1$,
  the family arising from these matrices identifies $\PP^m$ with a subset of
  $\Quot^{\kern0.5pt q} \kern-0.5pt (\mathcal{O}_{\PP^n}^{\! r})$.

  When $\lambda = (n^{e})$, the Hilbert scheme
  $\Hilb^{\kern0.5pt p_{\lambda}} \kern-0.5pt (\PP^n)$ parameterizes
  hypersurfaces of degree $e$ in $\PP^n$. It follows that
  $N_{\lambda} = \binom{n+e}{n} - 1$; also see \cite{SS23}*{Remark~2.7}. As
  $\lambda_{e} \geqslant 2$, Corollary~\ref{c:easy} shows that the scheme
  $\Quot^{\kern0.5pt q} \kern-0.5pt (\mathcal{O}_{\PP^n}^{\! r})$ is smooth
  and irreducible. Since $r = s+1$, Proposition~\ref{p:dim} establishes that
  this Quot scheme has dimension $m = N_{\lambda} + (r-1) \binom{n+e}{n}$. We
  conclude that
  $\Quot^{\kern0.5pt q} \kern-0.5pt (\mathcal{O}_{\PP^n}^{\! r}) \cong
  \mathbb{P}^m$.
\end{proof}

\begin{example}
  \label{e:rig}
  Together Example~\ref{e:TP} and Proposition~\ref{p:3rd} show that the Quot
  scheme that has a point corresponding to the twisted tangent bundle
  $\mathcal{T}_{\PP^n}(-1)$ is isomorphic to $\PP^m$, where
  $m \coloneqq n^2+2n$. Since $\dim \operatorname{PGL}(n+1,\kk) = n^2+2n$, one
  can show that any flat deformation of the tangent bundle on $\PP^n$ is
  isomorphic to the original bundle; compare with \cite{Siu91}*{Main Theorem}.
\end{example}

\section{Singular Quot Schemes}
\label{s:sing}

\noindent
This section identifies sufficient conditions on the nonnegative integer $s$
and the integer partition
$\lambda \coloneqq (\lambda_1, \lambda_2, \dotsc, \lambda_e)$ that guarantee
the scheme $\Quot^{\kern0.5pt q} \kern-0.5pt (\mathcal{O}_{\PP^n}^{\! r})$ is
singular, where
\[
  q(t) \coloneqq s \binom{t+n}{n} + p_{\lambda}(t) \in \QQ[t] \, .
\]
The base scheme remains $\Spec(\kk)$, every integer partition $\lambda$
continues to satisfy $n \geqslant \lambda_1$, and the polynomial ring
$S \coloneqq \kk[x_0,x_1, \dotsc, x_n]$ is still the homogeneous coordinate
ring of $\PP^n$.

When $r \geqslant s+2$, Corollary~\ref{c:!b} shows that the conditions
$\lambda_{e} \geqslant 2$, $\lambda = (1)$, or $\lambda = ()$ are the same as
the scheme $\Quot^{\kern0.5pt q} \kern-0.5pt (\mathcal{O}_{\PP^n}^{\! r})$
having exactly one Borel-fixed point. Otherwise $\lambda_{e} = 1$ and
$e \geqslant 2$ guarantees that the scheme
$\Quot^{\kern0.5pt q} \kern-0.5pt (\mathcal{O}_{\PP^n}^{\! r})$ has at least
two Borel-fixed points. Specifically, the lexicographic point corresponds to
the submodule $S^{r-s-1} \oplus L(\lambda) \oplus 0^{s} \subseteq S^r$, and
another Borel-fixed point corresponds to the submodule
$S^{r-s-2} \oplus L(1) \oplus L \bigl( \kern-1.0pt (\lambda_1, \lambda_2,
\dotsc, \lambda_{e-1}) \kern-1.0pt \bigr) \oplus 0^{s} \subseteq S^{r}$, where
$L(1) \coloneqq L\bigl( \kern-1.0pt (1) \kern-1.0pt \bigr) = \langle x_0, x_1,
\dotsc, x_{n-1} \rangle$. Comparing the tangent spaces at these points is the
key to producing many singular Quot schemes.

\begin{lemma}
  \label{l:key} 
  Let $s$ be a nonnegative integer, let
  $\lambda \coloneqq (\lambda_1, \lambda_{2}, \dotsc, \lambda_{e-1}, 1)$ be an
  integer partition with $e \geqslant 2$ and at least one part equal to $1$,
  and consider the polynomial
  $q(t) \coloneqq s \binom{t+n}{n} + p_{\lambda}(t) \in \QQ[t]$. Assume that
  $r \geqslant s+2$.  Writing
  $\mu \coloneqq (\lambda_1, \lambda_{2}, \dotsc, \lambda_{e-1})$ for a
  subpartition of $\lambda$, the submodules
  $S^{r-s-1} \oplus L(\lambda) \oplus 0^{s} \subseteq S^{r}$ and
  $S^{r-s-2} \oplus L(1) \oplus L(\mu) \oplus 0^{s} \subseteq S^{r}$ determine
  two Borel-fixed points in the Quot scheme
  $\Quot^{\kern0.5pt q }\kern-0.5pt (\mathcal{O}_{\PP^n}^{\! r})$. The
  dimension of the tangent spaces to this scheme at these points are equal if
  and only if
  \[  
    h^0 \kern-1.0pt \Bigl( \kern-0.5pt \Hom_S \bigl(  L(\mu), \,
    S^1 \mathbin{\!\!/\!} L(1) \kern-1.0pt \bigr) \kern-3.0pt \Bigr) \kern-1.0pt
    + h^0 \kern-1.0pt \Bigl( \kern-0.5pt \Hom_S \bigl( L(1), \, 
    S^1 \mathbin{\!\!/\!} L(\mu) \kern-1.0pt \bigr) \kern-3.0pt \Bigr) \kern-1.0pt
    = h^0 \kern-0.5pt \bigl( S^1 \mathbin{\!\!/\!} L(\lambda) \kern-1.0pt \bigr) 
    \, .
  \]
  In particular, the scheme
  $\Quot^{\kern0.5pt q} \kern-0.5pt (\mathcal{O}_{\PP^n}^{\! r})$ is singular
  whenever this equality fails to hold.
\end{lemma}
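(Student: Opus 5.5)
The plan is to compute the tangent spaces at both Borel-fixed points exactly as in the proof of Proposition~\ref{p:dim}, and then compare the two dimensions term by term. Write $K_A \coloneqq S^{r-s-1} \oplus L(\lambda) \oplus 0^{s}$ and $K_B \coloneqq S^{r-s-2} \oplus L(1) \oplus L(\mu) \oplus 0^{s}$. Both are Borel-fixed by (B1)--(B3), since $S \supseteq L(1) \supseteq L(\mu)$ and lexicographic ideals are strongly stable. Lemma~\ref{l:sum} (second bullet) gives $p_{\lambda} = p_{\mu} + p_{(1)}$, so both quotients have Hilbert polynomial $q$. The tangent space at $K$ is $H^0$ of the sheaf associated to $\Hom_S(K, S^r/K)$. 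Expanding this Hom module over the direct sum decompositions, exactly as in Proposition~\ref{p:dim}, the tangent dimension at $K_A$ is
\[
  T_A = (r-s-1)\bigl(s + h^0(S^1/L(\lambda))\bigr) + s\, h^0\bigl(\Hom_S(L(\lambda),S)\bigr) + N_{\lambda} \, .
\]
For $K_B$, put $X \coloneqq h^0\bigl(\Hom_S(L(\mu), S^1/L(1))\bigr)$ and $Y \coloneqq h^0\bigl(\Hom_S(L(1), S^1/L(\mu))\bigr)$. Then
\[
  T_B = (r-s-2)\bigl(1 + h^0(S^1/L(\mu)) + s\bigr) + n + X + Y + s\, h^0\bigl(\Hom_S(L(1),S)\bigr) + N_{\mu} + s\, h^0\bigl(\Hom_S(L(\mu),S)\bigr) \, .
\]
Here $h^0\bigl(\Hom_S(L(1),S^1/L(1))\bigr) = n$ is the tangent space to $\Hilb^1(\PP^n) = \PP^n$, and $h^0\bigl(\Hom_S(L(\mu),S^1/L(\mu))\bigr) = N_{\mu}$ by Rees--Stillman's smoothness of the lexicographic point.

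Next I compare $T_A$ and $T_B$ piece by piece.

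\begin{compactitem}[$\bullet$]
\item \emph{The $r$-dependence cancels.} The coefficient of $r$ is $s + h^0(S^1/L(\lambda))$ in $T_A$ and $s + 1 + h^0(S^1/L(\mu))$ in $T_B$. These agree because $h^0(S^1/L(\lambda)) = h^0(S^1/L(\mu)) + 1$ by Lemma~\ref{l:h^0}: either $\lambda = (1^{e})$ and the values are $a_1$ and $a_1 - 1$, or $e > a_1$ and the induction step there applies. So I may set $r = s+2$.
\item \emph{The $s$-dependence cancels.} I use Lemma~\ref{l:dual} for the dual terms. When $n \geqslant 2$, the multiplicity $a_n$ is the same for $\lambda$ and $\mu$, and $h^0\bigl(\Hom_S(L(1),S)\bigr) = 1$; the $s$-terms are then $s + s\binom{n+a_n}{n}$ on both sides. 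When $n = 1$, $\lambda = (1^{e})$, and the $s$-terms are $s + s(e+1)$ and $2s + se$, which again agree.
\item \emph{What remains.} After these cancellations, $T_A - T_B = h^0(S^1/L(\lambda)) - X - Y + (N_{\lambda} - N_{\mu} - n)$.
\end{compactitem}

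The step I expect to be the main obstacle is showing $N_{\lambda} = N_{\mu} + n$, that is, adding a part of size $1$ raises the dimension of the lexicographic component by exactly $n$. I would read this off the explicit dimension formula in \cite{RS97}*{Theorem~4.1} and \cite{SS23}*{Remark~2.7}. Geometrically, a general member of the lexicographic component for $\lambda$ is a general member for $\mu$ together with one extra general point of $\PP^n$. Granting this, $T_A = T_B$ holds exactly when $X + Y = h^0(S^1/L(\lambda))$, which is the stated criterion.

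For the final assertion, suppose the equality fails, so $T_A \neq T_B$. Assume for contradiction that $\Quot^{\kern0.5pt q}\kern-0.5pt(\mathcal{O}_{\PP^n}^{\! r})$ is smooth. It is connected by \cite{Par96}*{Theorem~36}, and a connected smooth scheme is irreducible. Its tangent-space dimension then equals its constant dimension at every point, and this contradicts $T_A \neq T_B$. Hence the scheme is singular. Indeed, since the lexicographic point is smooth by Theorem~\ref{t:smooth}, the singular point must be the other Borel-fixed point $K_B$ or lie elsewhere on its component.
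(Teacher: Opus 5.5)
Your proposal is correct and follows essentially the same route as the paper. You decompose $\Hom_S(K, S^r/K)$ at both Borel-fixed points, cancel terms using Lemma~\ref{l:h^0}, Lemma~\ref{l:dual}, and $N_{\lambda} = N_{\mu} + n$ from \cite{SS23}*{Remark~2.7}, and then conclude singularity from connectedness.

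Your handling of the $s$-terms is actually more careful than the paper's. The paper asserts $h^0(\Hom_S(L(\lambda),S)) = h^0(\Hom_S(L(1),S)) + h^0(\Hom_S(L(\mu),S))$, but the correct identity has an extra $1$ on the left. That extra $1$ is absorbed by the $S^{(r-s-1)s}$ versus $S^{(r-s-2)s}$ summands, exactly as in your computation. Also, $p_{\lambda} = p_{\mu} + p_{(1)}$ follows immediately from the last summand $\binom{t+1-e}{0} = 1$, so you do not need Lemma~\ref{l:sum}; its second bullet does not apply when $\lambda_{e-1} = 1$.
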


\begin{proof} 
  The scheme $\Quot^{\kern0.5pt q} \kern-0.5pt (\mathcal{O}_{\PP^n}^{\! r})$
  is path-connected; see \cite{BdC88}*{Th\'eor\`eme~1} or
  \cite{Par96}*{Theorem~36}.  Having two points whose tangent spaces have
  different dimensions would, thereby, imply that this Quot scheme is
  singular, so the second part of the lemma follows from the first part.

  As in the proof of Proposition~\ref{p:dim}, the tangent space to this Quot
  scheme at the point corresponding to the submodule
  $M \coloneqq S^{r-s-2} \oplus L(1) \oplus L(\mu) \oplus 0^{s} \subseteq S^r$
  is the space of the global sections of the sheaf associated to the
  $\ZZ$\nobreakdash-graded $S$-module
  $\Hom_{S} \kern-0.5pt \bigl( M, S^1 \mathbin{\!\!/\!} L(1) \oplus S^1
  \mathbin{\!\!/\!} L(\mu) \oplus S^{s} \kern-1.0pt \bigr)$. Since the Hom
  functor commutes with finite direct sums, this $S$-module is isomorphic to
  \begin{align*}
    & S^{(r-s-2)s} \oplus \kern-1.0pt \left( 
      \bigoplus_{i=1}^{r-s-2} \kern-3.0pt  \left( 
      \frac{S^1}{L(1)} \oplus \frac{S^1}{L(\mu)}
      \kern-1.0pt \right) \kern-4.0pt \right) 
      \oplus \left( \bigoplus_{j=1}^{s} \Bigl(
      \Hom_{S} \kern-0.5pt \bigl( L(1), S \bigr) 
      \oplus \Hom_{S} \kern-0.5pt \bigl( L(\mu), S \bigr) 
      \kern-2.0pt \Bigr) \kern-4.0pt \right) \\[-5pt]
    &\phantom{ww} \oplus \Hom_{S} \kern-1.0pt \left( \kern-1.0pt
      L(1), \frac{S^1}{L(1)} \kern-1.0pt \right)
      \oplus \Hom_{S} \kern-1.0pt \left( \kern-0.5pt 
      L(1),  \frac{S^1}{L(\mu)} \kern-1.0pt \right) 
      \oplus \Hom_{S} \kern-1.0pt \left( \kern-1.0pt 
      L(\mu), \frac{S^1}{L(1)}  \kern-1.0pt \right)
      \oplus \Hom_{S} \kern-1.0pt \left( \kern-1.0pt 
      L(\mu), \frac{S^1}{L(\mu)} \kern-1.0pt \right) \, .
  \end{align*}
  Lemma~\ref{l:h^0} shows that
  $h^0 \bigl( S^1 \mathbin{\!\!/\!} L(\lambda) \kern-1.0pt \bigr) = h^0 \bigl(
  S^1 \mathbin{\!\!/\!} L(\mu) \kern-1.0pt \bigr) + h^0 \bigl( S^1
  \mathbin{\!\!/\!} L(1) \kern-1.0pt \bigr)$ and Lemma~\ref{l:dual} shows that
  \[
    \smash{h^0 \Bigl( \Hom_{S} \kern-0.5pt \bigl( L(\lambda), S \bigr) \kern-2.0pt \Bigr)} 
    = \smash{h^0 \Bigl( \Hom_{S} \kern-0.5pt \bigl( L(1), S \bigr) \kern-2.0pt \Bigr)} 
    + \smash{h^0 \Bigl( \Hom_{S} \kern-0.5pt \bigl( L(\mu), S \bigr) \kern-2.0pt \Bigr)} \, .
  \]
  Since \cite{SS23}*{Remark~2.7} shows that $N_{\lambda} = N_{\mu} + N_{(1)}$
  where
  $N_{\nu} = h^0 \kern-0.5pt \smash{\Bigl( \Hom_{S} \kern-0.5pt \bigl( L(\nu),
    \, S^1 \mathbin{\!\!/\!} L(\nu) \kern-1.0pt \bigr) \kern-3.0pt \Bigr)}$
  for any integer partition $\nu$, we deduce that
  \[
    h^0 \Biggl( \kern-2.0pt \Hom_{S} \kern-2.0pt \left(
      \kern-2.0pt L(\mu), 
      \frac{S^1}{L(\mu)} \kern-2.0pt \right) 
    \kern-4.0pt \Biggr) 
    \kern-1.5pt \oplus
    h^0 \Biggl( \kern-2.0pt \Hom_{S} \kern-2.0pt \left(
      \kern-2.0pt L(1), 
      \frac{S^1}{L(1)} \kern-2.0pt \right) 
    \kern-4.0pt \Biggr)
    = h^{0} \Biggl( \kern-2.0pt \Hom_{S} \kern-2.0pt \left(
      \kern-2.0pt L(\lambda), 
      \frac{S^1}{L(\lambda)} \kern-2.0pt \right)
    \kern-4.0pt \Biggr) \, .
  \]
  Combined with Proposition~\ref{p:dim}, we see that the difference between
  the tangent-space dimensions at the two Borel-fixed points is
  \[  
    h^0 \kern-0.5pt \bigl( S^1 \mathbin{\!\!/\!} L(\lambda) \kern-1.0pt \bigr) 
    - h^0 \kern-1.0pt \Bigl( \kern-0.5pt \Hom_S \bigl(  L(\mu), \,
    S^1 \mathbin{\!\!/\!} L(1) \kern-1.0pt \bigr) \kern-3.0pt \Bigr) \kern-1.0pt
    - h^0 \kern-1.0pt \Bigl( \kern-0.5pt \Hom_S \bigl( L(1), \, 
    S^1 \mathbin{\!\!/\!} L(\mu) \kern-1.0pt \bigr) \kern-3.0pt \Bigr) 
    \, . \qedhere
  \]
\end{proof}

To illustrate Lemma~\ref{l:key}, we show that
$\Quot^{\kern0.5pt q} \kern-0.5pt (\mathcal{O}_{\PP^n}^{\! r})$ is singular
when $n \geqslant 2$, $r \geqslant 2$, and $\lambda = (1,1)$.  In particular,
the scheme $\Quot^{\kern0.5pt 2} \kern-0.5pt (\mathcal{O}_{\PP^n}^{\! r})$ is
singular, but the scheme $\Hilb^{2}(\PP^n)$ is smooth for all $n \geqslant 1$;
see the third condition in Remark~\ref{r:Hil}.

\begin{example}
  \label{e:11} 
  Let $s$ be a nonnegative integer and consider the integer partition
  $\lambda \coloneqq (1,1)$. The associate polynomial is
  $q(t) = s \binom{t+n}{n} + 2 \in \QQ[t]$.  For the subpartition
  $\mu \coloneqq (1)$, we have
  \[
    M 
    \coloneqq \Hom_S \bigl( L(\mu), S^1 \mathbin{\!\!/\!} L(1) \kern-1.0pt \bigr) 
    = \Hom_S \bigl( L(1), S^1 \mathbin{\!\!/\!} L(\mu) \kern-1.0pt \bigr) 
    = \Hom_S \bigl( L(\mu), S^1 \mathbin{\!\!/\!} L(\mu) \kern-1.0pt \bigr) \, .
  \]
  Since $\Hilb^1(\PP^n) = \PP^n$, we also have $n = N_{\mu} = h^0(M)$. It follows that
  \[
    h^0 \Bigl( \kern-0.5pt  
    \Hom_S \bigl( L(\mu), \, S^1 \mathbin{\!\!/\!} L(1) \kern-1.0pt \bigr) \kern-3.0pt \Bigr) \kern-1.5pt
    + h^0 \Bigl( \kern-0.5pt 
    \Hom_S \bigl( L(1), \, S^1 \mathbin{\!\!/\!} L(\mu) \kern-1.0pt \bigr) \kern-3.0pt \Bigr) \kern-1.5pt
    = 2n \, . 
  \]  
  On the other hand, Lemma~\ref{l:h^0} gives
  $h^0 \kern-0.5pt \bigl( S^1 \mathbin{\!\!/\!} L(\lambda) \kern-0.5pt \bigr)
  = 2$.  Thus, Lemma~\ref{l:key} establishes that the scheme
  $\Quot^{\kern-0.5pt q} \kern-0.5pt (\mathcal{O}_{\PP^n}^{\! r})$ is singular
  whenever $r \geqslant s+2$ and $n \geqslant 2$.
\end{example}

\begin{remark}
  Experimental evidence, based on computations in \cite{M2}, suggests the
  following more precise statement.  For any integer partition
  $\lambda \coloneqq (\lambda_{1}, \lambda_{2}, \dotsc, \lambda_{e})$, it
  seems that
  \begin{align*}
    h^0  \kern-1.0pt \Biggl( \kern-2.0pt \Hom_S \biggl( 
    \kern-2.0pt L(1), \,
    \frac{S^1}{L(\lambda)} \biggr) 
    \kern-5.0pt \Biggr) \kern-2.0pt
    &= \!
      \begin{cases}
        n+d+1 \!\!\!\!\!\!
        & \text{if $\lambda \!=\! (1^d)$,} \\[-3pt]
        n+d \!\!\!\!\!\!
        & \text{if
          $\lambda \!=\! (\lambda_{1}, \lambda_{2}, \dotsc, \lambda_{e-d},
          1^{d})$ where $e \!>\! d \!>\! 0$ and $\lambda_{e-d}
          \!>\! 1$,} \\[-3pt]
        2 \!\!\!\!\!\!
        & \text{if $\lambda \!=\! (2)$ or $\lambda \!=\! (\lambda_1,2)$ where
          $n \!\geqslant\! \lambda_{1} \!\geqslant\! 2$,} \\[-3pt]
        1 \!\!\!\!\!\! & \text{otherwise.}
        \end{cases}
    \end{align*}
\end{remark}

The subsequent lemma computes the number of global sections of the sheaf
associated to a particular $S$-module.

\begin{lemma}
  \label{l:m=1} 
  For any nonempty integer partition
  $\lambda \coloneqq (\lambda_1, \lambda_2, \dotsc, \lambda_e)$, we have
  \[ 
    h^0 \Bigl( \kern-0.5pt 
    \Hom_{S} \bigl( L(\lambda), \, S^1 \mathbin{\!\!/\!} L(1) \kern-1.0pt \bigr) 
    \kern-3.0pt \Bigr) \kern-1.5pt 
    = n - \lambda_{e} + 1 \, .
  \]
  In particular, when $\lambda$ has a part not equal to $n$, we have
  $\smash{h^0 \kern-0.5pt \Bigl( \kern-0.5pt \Hom_S \bigl( L(\lambda), S^1
    \mathbin{\!\!/\!} L(1) \kern-1.0pt \bigr) \kern-3.0pt \Bigr)} \kern-1.5pt
  \geqslant 2$.
\end{lemma}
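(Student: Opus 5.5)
The plan is to compute the global sections sheaf-theoretically instead of through Lemma~\ref{l:h^i}. Write $\mathcal{I} \coloneqq \widetilde{L(\lambda)}$ for the ideal sheaf of the lexicographic subscheme, and $\mathcal{O}_p \coloneqq \widetilde{S^1 \mathbin{\!\!/\!} L(1)}$, where $p \coloneqq [0 \mathbin{:} \dotsb \mathbin{:} 0 \mathbin{:} 1] \in \PP^n$.

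\emph{Step 1: pass to a local computation.} Since $L(\lambda)$ is finitely presented, sheafification commutes with $\Hom_S$. Hence the sheaf associated to $\Hom_S\bigl(L(\lambda), S^1 \mathbin{\!\!/\!} L(1)\bigr)$ is $\sHom(\mathcal{I}, \mathcal{O}_p)$. This sheaf is a skyscraper supported at $p$, with stalk
\[
\Hom_{\mathcal{O}_{\PP^n,p}}\bigl(\mathcal{I}_p, \kk(p)\bigr) \cong \bigl(\mathcal{I}_p \otimes \kk(p)\bigr)^{\vee}.
\]
Therefore $h^0$ equals $\dim_{\kk} \mathcal{I}_p \otimes \kk(p)$, which by Nakayama is the minimal number of generators of the stalk $\mathcal{I}_p$.

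\emph{Step 2: count generators at $p$.} Use the explicit generators of $L(\lambda)$ from Remark~\ref{r:gen}. None of them involves $x_n$, and each is a nonconstant monomial in $x_0, \dotsc, x_{n-1}$: the first is $x_0^{a_n+1}$, and every generator has positive degree because $e \geqslant 1$. So $p$ lies in the subscheme. On the affine chart $x_n = 1$, the ideal $\mathcal{I}$ becomes the monomial ideal generated by the same monomials in $\kk[x_0, \dotsc, x_{n-1}]$. The point $p$ is the origin of this chart.

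For a monomial ideal $J$ in a polynomial ring, the minimal monomial generators remain minimal after localizing at the origin. Indeed, $J/\mathfrak{m}J$ is a graded vector space whose dimension is the number of minimal monomial generators, and localizing at $\mathfrak{m}$ does not change $J/\mathfrak{m}J$. Remark~\ref{r:gen} shows that $L(\lambda)$ has exactly $n - k + 1$ minimal generators, where $k = \lambda_e$ is the smallest part. These minimal generators involve only $x_0, \dotsc, x_{n-1}$, so they stay minimal after dehomogenizing. This gives $h^0 = n - \lambda_e + 1$.

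\emph{Step 3: the final assertion.} If $\lambda$ has a part different from $n$, then its smallest part satisfies $\lambda_e \leqslant n-1$, so $n - \lambda_e + 1 \geqslant 2$.

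The main point needing care is Step 2: minimality must survive both dehomogenization and localization. The key observation is that all minimal generators listed in Remark~\ref{r:gen} avoid $x_n$. The count of $n-k+1$ minimal generators when $a_1 = \dotsb = a_{k-1} = 0$ is already recorded in Remark~\ref{r:gen}, so the rest is routine.
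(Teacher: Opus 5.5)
Your proof is correct, and it reaches the count by a different mechanism from the paper's.

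The paper stays with graded modules. It observes that every entry of a minimal syzygy among the generators of $L(\lambda)$ lies in $L(1)$. Applying $\Hom_S\bigl(-, S^1 \mathbin{\!\!/\!} L(1)\bigr)$ to a presentation of $L(\lambda)$ therefore kills the relation map. This gives $\Hom_S\bigl(L(\lambda), S^1 \mathbin{\!\!/\!} L(1)\bigr) \cong \bigoplus \bigl(S^1 \mathbin{\!\!/\!} L(1)\bigr)$ up to twists, with one summand per minimal generator, so its sheaf is $\mathcal{O}_P^{\,n-\lambda_e+1}$.

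You instead sheafify first. You identify the global sections of the skyscraper $\sHom(\mathcal{I}, \mathcal{O}_p)$ with $(\mathcal{I}_p \otimes \kk(p))^{\vee}$ and use Nakayama to reduce to the minimal number of generators of the stalk. You then match this with the graded count in Remark~\ref{r:gen} by dehomogenizing at $x_n$ and localizing at the origin.

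Both arguments rest on the same fact: the $n-\lambda_e+1$ minimal generators of $L(\lambda)$ avoid $x_n$.

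Your route never has to inspect syzygies. In particular, it does not rely on using a minimal presentation. The full list of $n$ monomials in Remark~\ref{r:gen} has relations with unit entries when $a_1 = 0$, for instance $g_{n-2} = x_{n-2}\, g_{n-1}$.

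The paper's route gives more information. It determines the graded module itself, as a free $S^1 \mathbin{\!\!/\!} L(1)$-module of rank $n-\lambda_e+1$ up to twists, not just its $h^0$.
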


\begin{proof}
  The description of the monomial generators of the lexicographic ideal
  $L(\lambda)$ in Remark~\ref{r:gen} implies that the entries in any
  nontrivial relation among them are in the ideal
  $L(1) = \langle x_0, x_1, \dotsc, x_{n-1} \rangle$.  We may compute the
  $\ZZ$-graded $S$-module
  $M \coloneqq \Hom_{S} \bigl( L(\lambda), \, S^1 \mathbin{\!\!/\!} L(1)
  \kern-1.0pt \bigr)$ by applying the functor
  $\Hom_S \bigl( -, S^1 \mathbin{\!\!/\!} L(1) \kern-1.0pt \bigr)$ to a
  presentation matrix of $L(\lambda)$ and taking the kernel. Since
  Remark~\ref{r:gen} shows that the number of minimal generators of
  $L(\lambda)$ is $n - \lambda_e + 1$, it follows that, up to grading,
    \[
        M \cong \bigoplus_{i=1}^{n-\lambda_{e}+1} 
        \kern-2.0pt \bigl( S^1 \mathbin{\!\!/\!} L(1) \kern-1.0pt \bigr) \, ,
    \]  
    However, the associated sheaf $\widetilde{M}$ does not depend on the grading, because $S^1 \mathbin{\!\!/\!} L(1)$ is supported on the point $P \coloneqq [0 \mathbin{:} 0 \mathbin{:} \dotsb \mathbin{:} 0 \mathbin{:} 1] \in \PP^n$. Thus, we see that $\widetilde{M} = \mathcal{O}_P^{n - \lambda_e + 1}$ and $h^0(M) = n - \lambda_e + 1$.

    The standing assumption that $n \geqslant \lambda_{1} \geqslant \lambda_{e}$ gives $n - \lambda_{e} +1 \geqslant 1$. Moreover, this inequality is an equality if and only if $\lambda_{e} = n$ or equivalently $\lambda = (n^{e})$.
\end{proof}

Given this preparatory work, we now prove the principal result of this section.

\begin{proposition} 
    \label{p:sing}
    Let $s$ be a nonnegative integer, let
    $\lambda \coloneqq (\lambda_1, \lambda_{2}, \dotsc, \lambda_{e-1}, 1)$ be
    an integer partition with $e \geqslant 2$ and at least one part equal to
    $1$, and consider the polynomial
    $q(t) \coloneqq s \binom{t+n}{n} + p_{\lambda}(t) \in \QQ[t]$. When
    $n \geqslant 2$, $r \geqslant s+2$, and $\lambda \neq (n^{e-1},1)$, the
    scheme $\Quot^{\kern0.5pt q }\kern-0.5pt (\mathcal{O}_{\PP^n}^{\! r})$ is
    singular.
\end{proposition}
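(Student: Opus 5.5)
The plan is to apply Lemma~\ref{l:key} to the two Borel-fixed points $S^{r-s-1} \oplus L(\lambda) \oplus 0^{s}$ and $S^{r-s-2} \oplus L(1) \oplus L(\mu) \oplus 0^{s}$, where $\mu \coloneqq (\lambda_1, \dotsc, \lambda_{e-1})$. It suffices to show that the equality in Lemma~\ref{l:key} fails, and I will aim for the strict inequality
\[
  h^0 \Bigl( \Hom_S \bigl( L(\mu), S^1 \mathbin{\!\!/\!} L(1) \bigr) \Bigr)
  + h^0 \Bigl( \Hom_S \bigl( L(1), S^1 \mathbin{\!\!/\!} L(\mu) \bigr) \Bigr)
  > h^0 \bigl( S^1 \mathbin{\!\!/\!} L(\lambda) \bigr) \, .
\]
Because $e \geqslant 2$, the partition $\mu$ is nonempty. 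Lemma~\ref{l:m=1} therefore evaluates the first summand exactly as $n - \mu_{e-1} + 1 = n - \lambda_{e-1} + 1$. The hypotheses $n \geqslant 2$ and $\lambda \neq (n^{e-1},1)$ force $\lambda_{e-1} < n$. Indeed, if $\lambda_{e-1} = n$, then every part of $\mu$ equals $n$ and $\lambda = (n^{e-1},1)$. Hence the first summand is at least $2$.

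For the right-hand side, Lemma~\ref{l:h^0} (more precisely, the short exact sequence in its proof) gives $h^0 \bigl( S^1 \mathbin{\!\!/\!} L(\lambda) \bigr) = h^0 \bigl( S^1 \mathbin{\!\!/\!} L(\mu) \bigr) + 1$. So it is enough to establish the lower bound
\[
  h^0 \Bigl( \Hom_S \bigl( L(1), S^1 \mathbin{\!\!/\!} L(\mu) \bigr) \Bigr) \geqslant h^0 \bigl( S^1 \mathbin{\!\!/\!} L(\mu) \bigr) - c
\]
for some $c$ with $c < n - \lambda_{e-1}$, since then the left-hand side exceeds $h^0 \bigl( S^1 \mathbin{\!\!/\!} L(\lambda) \bigr)$. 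Ideally I will get $c = 0$, in which case the difference is at least $n - \lambda_{e-1} \geqslant 1$. The natural source of such a bound is the restriction map $\mathcal{O}_Z = \sHom(\mathcal{O}_{\PP^n}, \mathcal{O}_Z) \to \sHom(\mathcal{I}_P, \mathcal{O}_Z)$. Here $Z$ is the scheme defined by $L(\mu)$, $P = [0 : \dotsb : 0 : 1]$, and $\mathcal{I}_P$ is the sheaf of $L(1)$. Applying $\sHom(-, \mathcal{O}_Z)$ to $0 \to \mathcal{I}_P \to \mathcal{O}_{\PP^n} \to \mathcal{O}_P \to 0$ shows that the kernel of this map is $\sHom(\mathcal{O}_P, \mathcal{O}_Z)$. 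Thus, whenever $P \notin Z$, the map is injective on global sections and we get $c = 0$.

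The main obstacle is the case $P \in Z$. This happens exactly when $\mu$ itself has parts equal to $1$, because the points of the lexicographic scheme are concentrated at $P$. In that case I would work directly in the affine chart $x_n = 1$, where everything is monomial. There, $L(\mu)$ localizes to a monomial ideal $J$ in $\kk[x_0, \dotsc, x_{n-1}]$ (modulo the components away from $P$), and $\Hom(\mathfrak{m}, R/J)$ can be computed by hand. Besides the image of $R/J$ (which has codimension equal to $\dim \operatorname{soc}(R/J)$, the local socle), it also contains extra homomorphisms such as $x_i \mapsto$ socle elements. These correspond to the $\Ext^1(\mathcal{O}_P, \mathcal{O}_Z)$ contribution and should more than compensate for the loss. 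Concretely, I expect the local Hom to have dimension at least $\dim R/J + (n-1)$, which is consistent with the conjectural formula $n+d$ in the Remark following Example~\ref{e:11}. Such a bound would again give a difference of at least $n - \lambda_{e-1} \geqslant 1$. If this local computation proves unwieldy, the fallback is to settle for the cruder bound $c = $ the local socle dimension (which is $1$ for the colength-$(a_1-1)$ lexicographic ideal, a curvilinear-type monomial ideal). That still succeeds whenever $n - \lambda_{e-1} \geqslant 2$. The remaining boundary case $\lambda_{e-1} = n-1$ would then be checked separately.
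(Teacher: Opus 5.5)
Your skeleton is the paper's: Lemma~\ref{l:key} at the two Borel-fixed points, Lemma~\ref{l:m=1} for the first summand, Lemma~\ref{l:h^0} for the right side, and the restriction map $S^1/L(\mu)\to\Hom_S(L(1),S^1/L(\mu))$ for the second summand. The paper settles the last step in one line. It treats $\mu=(1)$ by Example~\ref{e:11}, and otherwise asserts that $L(\mu)\subsetneq L(1)$ forces $\Hom_S(S^1/L(1),S^1/L(\mu))=0$, i.e.\ $c=0$ always. Your suspicion of this step is justified. The vanishing holds when $\mu$ has no part equal to $1$: no generator of $L(\mu)$ then involves $x_{n-1}$, so $x_{n-1}$ is a nonzerodivisor on $S^1/L(\mu)$ and $(L(\mu):L(1))=L(\mu)$. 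It fails when $\mu$ has a part $1$. Writing $a_j$ for the multiplicities of $\mu$, the monomial $h\coloneqq x_0^{a_n}\dotsb x_{n-2}^{a_2}x_{n-1}^{a_1-1}$ lies in $(L(\mu):L(1))\setminus L(\mu)$. Since $x_n^k h\notin L(\mu)$ for all $k$, the kernel $\sHom(\mathcal{O}_P,\mathcal{O}_Z)$ is nonzero, already for $\mu=(1,1)$ with $h=x_{n-1}$.

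That said, your proposal does not close this case, so it has a genuine gap. First, the dichotomy is misstated. Since $L(\mu)\subseteq L(1)$ for every nonempty $\mu$, we always have $P\in Z$, so the case ``$P\notin Z$'' never occurs. The correct dividing line is whether $P$ is an associated point of $Z$, i.e.\ whether $\mu$ has a part $1$, and the easy case needs the nonzerodivisor argument above. Also, when $\mu$ has parts $>1$, the positive-dimensional part of $Z$ passes through $P$, so the local picture is not a zero-dimensional curvilinear scheme. Second, in the hard case you offer only an expectation. Your fallback $c=1$ is correct, since $\sHom(\mathcal{O}_P,\mathcal{O}_Z)$ has length one, generated by $h$. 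But it needs $n-\lambda_{e-1}\geqslant 2$, and here $\lambda_{e-1}=1$. So the leftover ``boundary case'' is every $\lambda$ with $n=2$ and at least two parts equal to $1$, where your bound yields only equality with $h^0(S^1/L(\lambda))$.

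To finish, work in the chart $x_n=1$ with coordinates $y_0,\dotsc,y_{n-1}$, and let $J$ be the image of $L(\mu)$. For $0\leqslant i\leqslant n-2$, define $\phi_i(y_i)=h$ and $\phi_i(y_j)=0$ for $j\neq i$. Every $y_j$ kills $h$ modulo $J$, so the Koszul relations hold and $\phi_i$ is a local section of $\sHom(\mathcal{I}_P,\mathcal{O}_Z)$. It is supported at $P$, so it extends by zero to a global section. Suppose $\sum_i c_i\phi_i$ were multiplication by some $u\in\mathcal{O}_{Z,P}$ with some $c_i\neq0$. Comparing monomial supports (in the completion) in $y_iu\equiv c_ih$ forces $y_i\mid h$ and puts $h/y_i$ in the support of $u$. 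But $y_{n-1}h/y_i\notin J$, contradicting $y_{n-1}u\equiv0$. The image of $H^0(\mathcal{O}_Z)$ has codimension one in $H^0(\mathcal{O}_Z)$, and the $n-1$ sections $\phi_i$ are independent modulo it. Hence $h^0\bigl(\Hom_S(L(1),S^1/L(\mu))\bigr)\geqslant h^0(S^1/L(\mu))+n-2$. The left side in Lemma~\ref{l:key} is then at least $h^0(S^1/L(\mu))+2n-2>h^0(S^1/L(\lambda))$ for $n\geqslant 2$, which also covers $\mu=(1)$.
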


\begin{proof}  
  Consider the nonempty subpartition
  $\mu \coloneqq (\lambda_1, \lambda_2, \dotsc, \lambda_{e-1})$ of $\lambda$.
  Since Example~\ref{e:11} covers the case $\mu = (1)$, we may assume that
  $\mu \neq (1)$.  Hence, the proper inclusion $L(\mu) \subset L(1)$ of ideals
  implies that
  $\Hom_{S} \bigl( S^1 \mathbin{\!\!/\!} L(1), S^1 \mathbin{\!\!/\!} L(\mu)
  \kern-1.0pt \bigr) = 0$.  Applying the functor
  $\Hom_S \bigl( -, S^1 \mathbin{\!\!/\!} L(\mu) \kern-1.0pt \bigr)$ to the
  short exact sequence
  \[
    0 \longrightarrow 
    L(1) \longrightarrow 
    S^1 \longrightarrow
    S^1 \mathbin{\!\!/\!} L(1) \longrightarrow
    0
  \]
  of $\ZZ$-graded $S$-modules proves that $S^1 \mathbin{\!\!/\!} L(\mu)$ is a
  submodule of
  $\Hom_S \bigl( L(1), S^1 \mathbin{\!\!/\!} L(\mu) \kern-1.0pt \bigr)$.  It
  follows that
  \[  
    h^0 \Bigl( \kern-0.5pt 
    \Hom_S \bigl( L(1), \, S^1 \mathbin{\!\!/\!} L(\mu) \kern-1.0pt \bigr) \kern-3.0pt \Bigr) \kern-1.5pt 
    \geqslant h^0 \bigl( S^1 \mathbin{\!\!/\!} L(\mu) \kern-1.0pt \bigr) \, .
  \]
  Since $\mu \neq (n^{e-1})$, Lemma~\ref{l:m=1} gives
  $h^0 \smash{\Bigl( \Hom_S \bigl( L(\mu), S^1 \mathbin{\!\!/\!} L(1)
    \kern-1.0pt \bigr) \kern-3.0pt \Bigr)} \geqslant 2$.  Furthermore,
  Lemma~\ref{l:h^0} shows that
  $h^0 \bigl( S^1 \mathbin{\!\!/\!} L(\lambda) \kern-1.0pt \bigr) = h^0 \bigl(
  S^1 \mathbin{\!\!/\!} L(\mu) \kern-1.0pt \bigr) + 1$, so we obtain
  \[  
    h^0 \Bigl( \kern-0.5pt 
    \Hom_S \bigl( L(\mu), \, S^1 \mathbin{\!\!/\!} L(1) \kern-1.0pt \bigr)
    \kern-3.0pt \Bigr) \kern-1.5pt
    + h^0 \Bigl( \kern-0.5pt \Hom_S \bigl( L(1), \, S^1 \mathbin{\!\!/\!}
    L(\mu) \kern-1.0pt \bigr) \kern-3.0pt \Bigr) \kern-1.5pt \geqslant 2 + h^0
    \kern-0.5pt \bigl( S^1 \mathbin{\!\!/\!} L(\mu) \kern-1.0pt \bigr)
    \kern-0.5pt
    > h^0 \kern-0.5pt \bigl( S^1 \mathbin{\!\!/\!} L(\lambda) \kern-1.0pt
    \bigr) \, .
  \]
  Thus, Lemma~\ref{l:key} establishes that the scheme
  $\Quot^{\kern0.5pt q} \kern-0.5pt (\mathcal{O}_{\PP^n}^{\! r})$ is singular.
\end{proof}

\begin{example}
  The Koszul complex gives a surjective morphism
  $\smash{\mathcal{O}^{\! \binom{n+1}{2}}_{\mathbb{P}^{n}}} \to
  \Omega_{\mathbb{P}^{n}}(2)$.  When $n = 3$, the Hilbert polynomial of the
  twisted cotangent bundle $ \Omega_{\mathbb{P}^{3}}(2)$ is
  \[
    q(t) 
    = \tfrac{1}{2}t^{3} + 4 t^{2} + \tfrac{19}{2} t + 6
    = 3 \tbinom{t+3}{3} +  p_{\lambda}(t) \, ,
  \]
  where $\lambda \coloneqq (3^2, 2^2, 1^5)$. When $n = 3$, $r = 6$, $s = 3$,
  and $\lambda = (3^2, 2^2, 1^5)$, Proposition~\ref{p:sing} shows that the
  Quot scheme $\Quot^{\kern0.5pt q} \kern-0.5pt (\mathcal{O}_{\PP^3}^{\! 6})$,
  which contains the flat deformations of $\Omega_{\mathbb{P}^{3}}(2)$, is
  singular.
    
  For all $3 \leqslant n \leqslant 9$, calculations in \cite{M2} indicate that
  the multiplicity $a_1$ for the part of $1$ in the integer partition
  associated to the Hilbert polynomial of $ \Omega_{\mathbb{P}^{n}}(2)$ grows
  exponentially; the limited data suggests
  $1 + \lfloor \log_{10} a_1 \rfloor \in O(2.17^{n})$.  In particular, it
  appears that the Quot schemes, containing a point corresponding to
  $\Omega_{\mathbb{P}^{n}}(2)$, are singular for all $n \geqslant 3$.
\end{example}

\section{Classifying Smooth Quot Schemes}
\label{s:main}

\noindent
The purpose of this section is to prove Theorem~\ref{t:main}.  By
Lemma~\ref{l:smooth}, we may assume that the base scheme is
$\Spec(\kk)$. Corollary~\ref{c:easy} already demonstrates that the scheme
$\Quot^{\kern0.5pt q} \kern-0.5pt (\mathcal{O}_{\PP^n}^{\! r})$ is nonsingular
when the integer partition
$\lambda \coloneqq (\lambda_1, \lambda_2, \dotsc, \lambda_e)$ satisfies
$\lambda_{e} \geqslant 2$, $\lambda = (1)$, or $\lambda = ()$. As a partial
converse, Proposition~\ref{p:sing} establishes that the scheme
$\Quot^{\kern0.5pt q} \kern-0.5pt (\mathcal{O}_{\PP^n}^{\! r})$ is singular
when $r \geqslant s+2$ and the integer partition $\lambda$ has at least one
part equal to $1$ but is not equal to $(n^{e-1}, 1)$ for some $e \geqslant 2$.

Among the lingering possibilities, we first suppose that $r \geqslant s + 2$
and $\lambda = (n^{e-1}, 1)$ for some $n \geqslant 2$ and $e \geqslant 2$. The
standard graded polynomial ring $S \coloneqq \kk[x_0, x_1, \dotsc, x_n]$
continues to be the homogeneous coordinate ring of $\PP^n$.  For notational
brevity, set
$L(2) \coloneqq L\bigl( \kern-0.5pt (2) \kern-0.5pt \bigr) = \langle x_0, x_1,
\dotsc, x_{n-2} \rangle$.

\begin{lemma}
  \label{l:2b}
  Let $s$ be a nonnegative integer, let $\lambda \coloneqq (n^{e-1}, 1)$ be an
  integer partition with $e \geqslant 2$, and consider the polynomial
  $q(t) \coloneqq s \binom{t+n}{n} + p_{\lambda}(t) \in \QQ[t]$. Assume that
  $r \geqslant s+2$ and regard $\mu \coloneqq (n^{e-1})$ as a subpartition of
  $\lambda$. If $\lambda \neq (2,2,1)$, then there are exactly two Borel-fixed
  submodules in $S^{r}$ whose quotients have Hilbert polynomial $q$:
  \begin{align*}
    & S^{r-s-1} \oplus L(\lambda) \oplus 0^{s} \subseteq S^{r}
    && \text{and} 
    && S^{r-s-2} \oplus L(1) \oplus L(\mu) \oplus 0^{s} \subseteq S^{r} \, . 
  \end{align*}
  Otherwise $\lambda = (2,2,1)$ and there are exactly three Borel-fixed
  submodules in $S^{r}$ whose quotients have Hilbert polynomial $q$: the above
  two and $S^{r-s-2} \oplus L(2) \oplus L(2) \oplus 0^{s} \subseteq S^{r}$.
\end{lemma}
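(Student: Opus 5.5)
The plan is to reduce the classification of Borel-fixed submodules to a classification of decompositions of the polynomial $p_{\lambda}$, and then to invoke uniqueness of Borel-fixed ideals for each summand. Throughout, ``Borel-fixed submodule'' means saturated, as in Corollary~\ref{c:!b}.

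\textbf{Reduction to the summands.} By conditions (B1)--(B3), any Borel-fixed submodule $K \subseteq S^r$ has the form $K = I_{(1)}\bm{e}_1 \oplus \dotsb \oplus I_{(r)}\bm{e}_r$ with $I_{(1)} \supseteq \dotsb \supseteq I_{(r)}$ Borel-fixed ideals. Each summand contributes to the Hilbert polynomial of $S^r/K$ as follows:
\begin{compactitem}[$\bullet$]
\item an ideal $I_{(j)} = S$ contributes $0$;
\item an ideal $I_{(j)} = 0$ contributes $\binom{t+n}{n}$;
\item a nonzero proper $I_{(j)}$ contributes $p_{\nu(j)}$ for some nonempty partition $\nu(j)$ with $\nu(j)_1 \leqslant n$, by Macaulay's theorem as in Proposition~\ref{p:nonempty}.
\end{compactitem}
Each $p_{\nu(j)}$ has degree at most $n-1$. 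Comparing coefficients of $t^n$ therefore shows that exactly $s$ of the ideals vanish. Consequently $p_{\lambda} = \sum_j p_{\nu(j)}$, where the sum runs over the nonzero proper ideals.

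\textbf{Classifying the decompositions.} I would next show that this sum has at most two terms. Suppose it had three or more, say $p_\lambda = p_{\alpha} + p_{\beta} + p_{\gamma} + \dotsb$. By the closure of associated polynomials under addition, $p_{\beta} + p_{\gamma} + \dotsb = p_{\nu}$ for some partition $\nu$, and Lemma~\ref{l:sum} forces $\nu$ to have a part equal to $1$. Now $\lambda = (n^{e-1},1)$ has exactly one part of size $1$, so Lemma~\ref{l:sum} applied to $p_\lambda = p_\alpha + p_\nu$ says the smaller summand is $(1)$ or $(2)$. Neither of these has a part equal to $1$ and is a nontrivial sum: the partition $(2)$ has no part $1$, and $p_{(1)} = 1$ cannot be a sum of two nonempty terms, since each $p$ is at least $1$ for large $t$. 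This is a contradiction.

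With two terms, Lemma~\ref{l:sum} gives exactly two options:
\begin{compactitem}[$\bullet$]
\item $\{\mu, (1)\}$ with $\mu = (n^{e-1})$;
\item $\{(\lambda_1),(2)\}$ with $\lambda = (\lambda_1,2,1)$. Matching this with $(n^{e-1},1)$ forces $e = 3$ and $n = \lambda_1 = 2$, so $\lambda = (2,2,1)$ and both summands are $(2)$.
\end{compactitem}

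\textbf{Identifying the ideals.} For each summand I would identify the Borel-fixed ideal. The partitions $\mu = (n^{e-1})$, $(2)$ and $(1)$ satisfy the hypothesis of Corollary~\ref{c:!b}, so their saturated Borel-fixed ideals are unique. They are $L(\mu) = \langle x_0^{e-1}\rangle$, $L(2)$ and $L(1)$ respectively.

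In the single-summand case I need the saturated Borel-fixed ideals $I$ with Hilbert polynomial $p_\lambda$, and this is the step I expect to be the main obstacle, since Corollary~\ref{c:!b} does not apply to $\lambda$ itself. I would argue as follows.
\begin{compactenum}[\upshape (i)]
\item The codimension-one part of $I$ is a Borel-fixed hypersurface of degree $e-1$, hence is defined by $x_0^{e-1}$, so $I \subseteq \langle x_0^{e-1}\rangle$.
\item Write $I = x_0^{e-1}J$. Then $J$ is a saturated Borel-fixed ideal whose quotient has Hilbert polynomial $p_\lambda - p_\mu = 1$.
\item By uniqueness for $(1)$, $J = L(1)$, and therefore $I = x_0^{e-1}L(1) = L(\lambda)$, as in Remark~\ref{r:gen}.
\end{compactenum}

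\textbf{Assembling the submodules.} Finally, condition (B3) forces the ordering of the summands. The copies of $S$ come first, and the zero ideals come last. In between, the two proper ideals are ordered by inclusion, using $L(\mu) \subseteq L(1)$ (and $L(2) = L(2)$ in the exceptional case). This produces exactly the two listed submodules when $\lambda \neq (2,2,1)$, and the additional third submodule $S^{r-s-2}\oplus L(2)\oplus L(2)\oplus 0^s$ when $\lambda = (2,2,1)$. The hypothesis $r \geqslant s+2$ is exactly what is needed for two proper nonzero summands to fit.
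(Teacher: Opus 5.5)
Your proof is correct. It follows the paper's skeleton: decompose $p_{\lambda}$ via Lemma~\ref{l:sum}, then use uniqueness of the saturated Borel-fixed ideal for each summand. It departs from the paper at two points.

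\textbf{Three or more summands.} The paper's proof reads off from Lemma~\ref{l:sum} that $p_{\lambda}$ is a nontrivial sum in at most the two ways $p_{\mu}+p_{(1)}$ and, for $\lambda=(2,2,1)$, $p_{(2)}+p_{(2)}$. It does not separately address sums with three or more terms, although Lemma~\ref{l:sum} only concerns two summands. Your argument fills this in by combining closure under addition with a second application of Lemma~\ref{l:sum}. It needs one small patch: you only check the smaller summand, but $\nu$ could equally be the larger one, $(n^{e-1})$. That partition also has no part equal to $1$, because $n \geqslant 2$ (implicit in this section), so the contradiction persists.

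\textbf{Uniqueness for $p_{\lambda}$ itself.} The paper simply cites \cite{Sta20}*{Theorem~1.1} for uniqueness of the saturated Borel-fixed ideal with Hilbert polynomial $p_{\lambda}$, $p_{\mu}$, $p_{(1)}$, or $p_{(2)}$. You instead prove the $p_{\lambda}$ case directly:
\begin{compactitem}[$\bullet$]
\item Borel-fixedness forces the gcd of the generators of $I$ to be a power of $x_0$.
\item Its degree is $e-1$, read off from the $t^{n-1}$-coefficient; this again uses $n \geqslant 2$.
\item The colon ideal $(I : x_0^{e-1})$ is saturated, Borel-fixed, and has Hilbert polynomial $1$, so it equals $L(1)$. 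This also recovers $L(\lambda)=x_0^{e-1}L(1)$.
\end{compactitem}
This makes the lemma self-contained for the one partition, $\lambda$ itself, that Corollary~\ref{c:!b} does not cover. The remaining uniqueness statements, for $\mu$, $(2)$ and $(1)$, you take from Corollary~\ref{c:!b}, which rests on the same external classification. So your route reduces the reliance on \cite{Sta20} rather than removing it. The paper's version is shorter but delegates both the decomposition count and all uniqueness claims.

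You also make explicit the bookkeeping the paper leaves to the reader: exactly $s$ zero summands, the ordering forced by (B3), and the role of $r \geqslant s+2$.
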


\begin{proof}
  Lemma~\ref{l:sum} demonstrates that the Hilbert polynomial $p_{\lambda}$ can
  be expressed as a nontrivial sum of Hilbert polynomials in at most two ways:
  $p_{\lambda} = p_{\mu} + p_{(1)}$ or, when $n = 2$ and $e = 3$,
  $p_{\lambda} = p_{(2)} + p_{(2)}$.  By \cite{Sta20}*{Theorem~1.1}, there is
  a unique saturated Borel-fixed submodule with Hilbert polynomial
  $p_{\lambda}$, $p_{\mu}$, $p_{(1)}$, or $p_{(2)}$.  Therefore, for each
  expression, there is a unique Borel-fixed submodule in $S^r$ whose
  components have the corresponding Hilbert polynomials.
\end{proof}

To handle this penultimate case, we need one more straightforward calculation.

\begin{lemma}
  \label{l:presM}   
  Assume that $n \geqslant 2$ and $e \geqslant 2$.  Consider the integer
  partition $\mu \coloneqq (n^{e-1})$.  For the $\ZZ$-graded $S$-module
  $M \coloneqq \Hom_S \bigl( L(1), \, S^1 \mathbin{\!\!/\!}  L(\mu)
  \kern-1.0pt \bigr)$, we have%
  \vspace*{-0.35em}
  \begin{align*}
    M &\cong \begin{cases}
      \left( \! \dfrac{S^1}{L(\mu)} \! \right) \kern-3.0pt (1) 
      & \hspace{-5pt} 
      \text{if $n = 2$ and $e = 2$,} \\[12pt]
      \dfrac{\kern-0.5pt \bigl( S^1 \oplus S^1(-e+3) \kern-1.0pt \bigr)}%
      {\operatorname{Im} \begin{bsmallmatrix} 
          \smash[t]{x_0^{e-2}} & 0 \\
          -x_1 & x_0 \\
        \end{bsmallmatrix}} 
      & \hspace{-5pt} 
      \text{if $n = 2$ and $e \geqslant 3$,} \\[2pt]
      \dfrac{S^1}{L(\mu)} 
      & \hspace{-5pt} 
      \text{if $n \geqslant 3$;} \\[-2pt]
    \end{cases} 
    \intertext{and} \\[-30pt]
    h^0({M}) &= \begin{cases}
      2 & \hspace{-5pt}
      \text{if $n = 2$ and $2 \leqslant e \leqslant 3$,} \\[-2pt]
      1 & \hspace{-5pt}
      \text{otherwise.}
    \end{cases}
  \end{align*}
\end{lemma}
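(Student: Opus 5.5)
The plan is to compute $M$ directly from a presentation of $L(1)$, then read off $h^0(M)$ from the resulting presentations.

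First I will identify the ideals. For $\mu = (n^{e-1})$ we have $a_n = e-1$ and every other multiplicity is $0$. By Remark~\ref{r:gen}, every other listed generator of $L(\mu)$ is then divisible by $x_0^{e-1}$, so $L(\mu) = \langle x_0^{e-1} \rangle$. Set $R \coloneqq S^1 \mathbin{\!\!/\!} L(\mu)$. Since $L(1) = \langle x_0, \dotsc, x_{n-1} \rangle$ is presented by its Koszul relations, a homogeneous element of $M$ of degree $d$ is a tuple $(m_0, \dotsc, m_{n-1})$ of elements of $R_{d+1}$ satisfying $x_j m_i = x_i m_j$ for all $i,j$.

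For $n \geqslant 3$, the variables $x_1, x_2$ form a regular sequence on the hypersurface ring $R$. From $x_2 m_1 = x_1 m_2$ I get $m_1 = x_1 g$ and $m_2 = x_2 g$ for some $g \in R_d$. The same argument gives $m_i = x_i g$ for each $i \geqslant 1$. Then $x_1(m_0 - x_0 g) = 0$, and since $x_1$ is a nonzerodivisor on $R$, this gives $m_0 = x_0 g$. Hence $g \mapsto (x_0 g, \dotsc, x_{n-1} g)$ is an isomorphism $R \cong M$ with no twist.

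For $n = 2$ there is a single relation $x_1 m_0 = x_0 m_1$. Reducing modulo the nonzerodivisor $x_1$ shows that $x_0 m_1 \in x_1 R$ holds exactly when $m_1 \in \langle x_1, x_0^{e-2} \rangle R$, because the annihilator of $x_0$ in $\kk[x_0,x_2]/\langle x_0^{e-1} \rangle$ is generated by $x_0^{e-2}$. This gives two generators of $M$:
\begin{compactitem}[$\bullet$]
\item the inclusion $(x_0, x_1)$, of degree $0$;
\item $(0, x_0^{e-2})$, of degree $e-3$; note that $m_0 = 0$ is forced because $x_1 m_0 = x_0^{e-1} = 0$ in $R$.
\end{compactitem}
Their relations are $x_0 \cdot (0, x_0^{e-2}) = 0$ and $x_0^{e-2} (x_0, x_1) = x_1 (0, x_0^{e-2})$. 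These are exactly the columns of the displayed matrix. When $e = 2$ the first generator equals $x_1$ times the second, which has degree $-1$, so $M \cong R(1)$.

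I expect the main obstacle to be checking that these relations generate all relations in the $n = 2$, $e \geqslant 3$ case. I will handle this by comparing Hilbert functions, or by noting that the determinant of the matrix is $x_0^{e-1}$, a nonzerodivisor, so the cokernel is annihilated by $x_0^{e-1}$ and has the expected rank data. This also gives the free resolution
\[
  0 \to S(-e+2)^2 \to S^1 \oplus S^1(-e+3) \to M \to 0 .
\]

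Finally, I will compute $h^0(M)$ using Lemma~\ref{l:h^i}.
\begin{compactitem}[$\bullet$]
\item For $n \geqslant 3$: $R$ has projective dimension $1 < n$, so $h^0(M) = \dim_\kk R_0 = 1$.
\item For $n = 2$, $e = 2$: $\widetilde{M}$ is $\mathcal{O}_{\text{line}}(1)$, so $h^0(M) = 2$.
\item For $n = 2$, $e \geqslant 3$: sheafify the resolution above and use $H^1(\PP^2, \mathcal{O}(k)) = 0$. This gives $h^0(M) = 1 + h^0(\mathcal{O}(3-e)) - 2\,h^0(\mathcal{O}(2-e))$, which is $1 + 3 - 2 = 2$ when $e = 3$ and $1$ when $e \geqslant 4$.
\end{compactitem}
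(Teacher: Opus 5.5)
Your proof is correct. For $n = 2$ it follows the paper's route. For $n \geqslant 3$ and for the computation of $h^0(M)$ it takes a different but valid path, and two small points need repair.

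\textbf{Comparison with the paper.} For $n = 2$ you and the paper both compute $M$ as the kernel of the map induced by the single Koszul relation and then find a minimal presentation. You are more explicit than the paper about the generators $(x_0,x_1)$ and $(0,x_0^{e-2})$. For $n \geqslant 3$ the paper applies $\Hom_S(L(1),-)$ to $0 \to S^1(-e+1) \to S^1 \to S^1 \mathbin{\!\!/\!} L(\mu) \to 0$. It kills the connecting term with $\Ext^1_S(L(1),S) = 0$, which comes from the self-duality of the Koszul complex. You instead argue elementwise that $x_1, x_2$ is a regular sequence on $R$. This amounts to saying $L(1)$ has grade at least $2$ on $R$. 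That gives $\Hom_S(S^1 \mathbin{\!\!/\!} L(1),R) = \Ext^1_S(S^1 \mathbin{\!\!/\!} L(1),R) = 0$, hence $M \cong \Hom_S(S^1,R) = R$. Your version is more elementary and self-contained. The paper's packages the same vanishing into the general fact $\Ext^i_S(L(1),S) = 0$ for $1 \leqslant i \leqslant n-2$. For $h^0(M)$ the paper notes uniformly that $M$ has projective dimension $1$, so Lemma~\ref{l:h^i} gives $h^0(M) = \dim_\kk M_0$. Your sheafified resolution in the case $n=2$, $e \geqslant 3$ gives the same count, $1 + \dim_\kk S_{3-e} - 2\dim_\kk S_{2-e}$.

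\textbf{Repair 1: the relations.} The step you flag as the main obstacle is a two-line check. Your determinant remark does not settle it on its own. The fact that $\det = x_0^{e-1}$ is a nonzerodivisor only shows the presentation matrix is injective. It does not show that its cokernel maps isomorphically onto $M$. Check it directly instead. Suppose $\alpha\,(x_0,x_1) + \beta\,(0,x_0^{e-2}) = 0$ with $\alpha,\beta \in S$.
\begin{itemize}
\item The first coordinate forces $x_0^{e-1} \mid \alpha x_0$ in $S$, so $\alpha = x_0^{e-2}\alpha'$.
\item The second coordinate is then $x_0^{e-2}(x_1\alpha' + \beta)$, so $x_0 \mid x_1\alpha' + \beta$, giving $\beta = -x_1\alpha' + x_0\beta'$.
\item Hence $(\alpha,\beta) = \alpha'(x_0^{e-2},-x_1) + \beta'(0,x_0)$, which lies in the image of the displayed matrix.
\end{itemize}
Your Hilbert-function comparison would also work, but it takes more computation.

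\textbf{Repair 2: arithmetic at $e = 3$.} Your evaluation there is off. We have $h^0(\mathcal{O}_{\PP^2}(0)) = 1$ and $h^0(\mathcal{O}_{\PP^2}(-1)) = 0$, so the count is $1 + 1 - 0 = 2$, not $1 + 3 - 2$. Your final value is right only by coincidence.
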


\begin{proof} 
  Recall that $L(1) = \langle x_0, x_1, \dotsc, x_{n-1} \rangle$ and
  $L(\mu) = \langle x_0^{e-1} \rangle$; see Remark~\ref{r:gen}.  The minimal
  free resolution of the $\ZZ$-graded $S$-module $S^1 \mathbin{\!\!/\!} L(1)$
  is a Koszul complex, so the $S$-module $L(1)$ has projective dimension $n-1$
  and a free presentation of the form
  $\smash{S^{\binom{n}{2}}(-2)} \longrightarrow S^n(-1) \longrightarrow L(1)
  \longrightarrow 0$.  Applying the functor $\Hom_{S}( -, S^1)$, we see that
  $\Hom_S \kern-0.5pt \bigl( L(1), S^1 \kern-0.5pt \bigr) \cong S^1$ and
  $\Ext^i_{S} \kern-0.5pt \bigl( L(1), S^1 \kern-0.5pt \bigr) = 0$ for all
  $1 \leqslant i \leqslant n-2$, because the Koszul complex is self-dual.

  Suppose that $n \geqslant 3$.  It follows that
  $\Ext^1_{S} \kern-0.5pt \bigl( L(1), S^1 \kern-0.5pt \bigr) = 0$ and
  $\Ext^1_{S} \kern-0.5pt \bigl( L(1), S^1(-e+1) \kern-1.0pt \bigr) = 0$.
  Hence, applying the functor
  $\Hom_{S} \kern-0.5pt \bigl( L(1), - \kern-0.5pt \bigr)$ to the short exact
  sequence
  \[ 
    0 \longrightarrow S^1(-e+1) \xrightarrow{
      \;\; \begin{bsmallmatrix*}
        x_0^{e-1} \\
      \end{bsmallmatrix*} \;\;}
    S^1 \longrightarrow S^1 \mathbin{\!\!/\!} L(\mu) \longrightarrow 0 
  \]
  of $\ZZ$-graded $S$-modules shows that
  $M \coloneqq \Hom_{S} \kern-0.5pt \bigl( L(1), S^1 \mathbin{\!\!/\!} L(\mu)
  \kern-0.5pt \bigr) \cong S^{1} \mathbin{\!\!/\!}  L(\mu)$.
    
  Suppose that $n = 2$. Applying the functor
  $\Hom_S \kern-0.5pt \bigl(-, S^1 \mathbin{\!\!/\!} L(\mu) \kern-0.5pt
  \bigr)$ to the free presentation yields
  \[
    0 
    \longrightarrow \Hom_S \!\left( \! L(1), \frac{S^1}{L(\mu)} \!\right) 
    \longrightarrow  \bigoplus_{i=1}^{2} \left( \! \frac{S^1}{L(\mu)} (-1) \!\!\right)
    \xrightarrow{\;\; \begin{bsmallmatrix*}
        -x_1 & x_0 \\    
      \end{bsmallmatrix*} \;\; }
    \frac{S^1}{L(\mu)} (-2) \, .
  \]
  When $e \geqslant 3$, it follows that 
  \[
    M = \Hom_S \!\left( \! L(1), \frac{S^1}{L(\mu)} \!\right) 
    \cong \left( \operatorname{Im} \begin{bmatrix*}
        x_0 & 0 \\
        x_1 & x_0^{e-2} \\
      \end{bmatrix*}
      + \operatorname{Im} \begin{bmatrix*}
        \smash[t]{x_0^{e-1}} & 0 \\
        0 & \smash[t]{x_0^{e-1}} \\
      \end{bmatrix*} \right) \!
    \bigg/ \operatorname{Im} \begin{bmatrix*}
      \smash[t]{x_0^{e-1}} & 0 \\
      0 & \smash[t]{x_0^{e-1}} \\
    \end{bmatrix*} \, . 
  \]
  Finding a minimal presentation for this subquotient gives
  $M \cong \kern-0.5pt \bigl( S^1 \oplus S^1(-e+3) \kern-1.0pt \bigr)
  \smash{\Big/ \operatorname{Im} \begin{bsmallmatrix*}
      \smash[t]{x_0^{e-2}} & 0 \\
      -x_1 & x_0 \\
    \end{bsmallmatrix*}}$.  Similarly, when $e = 2$, we have 
  \[
    M = \Hom_S \!\left( \! L(1), \frac{S^1}{L(\mu)} \!\right) 
    \cong \left( \operatorname{Im} \begin{bmatrix*}
        0 \\
        1 \\
      \end{bmatrix*}
      + \operatorname{Im} \begin{bmatrix*}
        x_0 & 0 \\
        0 & x_0 \\
      \end{bmatrix*} \right) \!
    \bigg/ \operatorname{Im} \begin{bmatrix*}
      x_0 & 0 \\
      0 & x_0 \\
    \end{bmatrix*}
    \cong
    \left( \! \dfrac{S^1}{L(\mu)} \! \right) \kern-3.0pt (1) 
    \, . 
  \]
    
  Finally, Lemma~\ref{l:h^i} implies that $h^{0}(M) = \dim_{\kk} M_{0}$
  because the projective dimension of $M$ is $1$. Thus, the second part of the
  conclusion follows from the first.
\end{proof}

Given these preliminaries, we can understand this special case.

\begin{proposition}
  \label{p:n^e1}
  Let $s$ be a nonnegative integer, let $\lambda \coloneqq (n^{e-1}, 1)$ be an
  integer partition with $e \geqslant 2$, and consider the polynomial
  $q(t) \coloneqq s \binom{t+n}{n} + p_{\lambda}(t) \in \QQ[t]$. Assume that
  $n \geqslant 2$ and $r \geqslant s+2$. The dimensions of the tangent spaces
  to $\Quot^{\kern0.5pt q} \kern-0.5pt (\mathcal{O}_{\PP^n}^{\! r})$ at the
  Borel-fixed points are equal if and only if we have $\lambda\neq (2,1)$ and
  $\lambda \neq (2,2,1)$. In particular, the scheme
  $\Quot^{\kern0.5pt q} \kern-0.5pt (\mathcal{O}_{\PP^n}^{\! r})$ is
  nonsingular if and only if $\lambda \neq (2,1)$ and $\lambda \neq (2,2,1)$.
\end{proposition}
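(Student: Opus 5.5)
The plan is to compare tangent-space dimensions at the Borel-fixed points with Lemma~\ref{l:key}, and then deduce smoothness everywhere by degenerating to those points.

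\textbf{Step 1: the numerical criterion.} Set $\mu \coloneqq (n^{e-1})$. By Lemma~\ref{l:2b}, the Borel-fixed points are the lexicographic point and the point $B$ given by $S^{r-s-2} \oplus L(1) \oplus L(\mu) \oplus 0^{s}$. When $\lambda = (2,2,1)$ there is also a third point. By Lemma~\ref{l:key}, the tangent dimensions at the lexicographic point and at $B$ agree if and only if
\[
h^0\bigl(\Hom_S(L(\mu), S^1/L(1))\bigr) + h^0\bigl(\Hom_S(L(1), S^1/L(\mu))\bigr) = h^0\bigl(S^1/L(\lambda)\bigr).
\]
I evaluate the three terms with earlier results.
\begin{compactitem}[$\bullet$]
\item Since $\mu_{e-1} = n$, Lemma~\ref{l:m=1} gives the first term $n - n + 1 = 1$.
\item Lemma~\ref{l:presM} gives the second term: it equals $2$ when $n = 2$ and $e \in \{2,3\}$, and $1$ otherwise.
\item Here $a_1 = 1 < e$, so Lemma~\ref{l:h^0} gives the right-hand side $a_1 + 1 = 2$.
\end{compactitem}
Hence equality holds exactly when $(n,e) \notin \{(2,2),(2,3)\}$, that is, when $\lambda \neq (2,1)$ and $\lambda \neq (2,2,1)$. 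When equality fails, Lemma~\ref{l:key} shows the scheme is singular. In particular, the third Borel-fixed point for $(2,2,1)$ never needs separate treatment.

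\textbf{Step 2: both Borel-fixed points are smooth when equality holds.} Let $D$ be the common tangent dimension. By Theorem~\ref{t:smooth} the lexicographic point is smooth, so it lies on a unique irreducible component $C$, and $\dim C = D$. I would then show that $B$ lies on $C$; once that holds, $D \geqslant \dim_B \geqslant \dim C = D$, so $B$ is smooth.

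To place $B$ on $C$, I use Proposition~\ref{p:fam}. It gives an irreducible family of quotients $S^2 \to F_{t}$ that contains the point $L(1) \oplus L(\mu)$ and has the point $S^1 \oplus L(\lambda)$ in its closure. I pad each member to the quotient $S^{r} \to 0^{r-s-2} \oplus F_t \oplus S^{s}$, killing the first $r-s-2$ summands and keeping the last $s$ free. This yields an irreducible family in $\Quot^{\kern0.5pt q}(\mathcal{O}_{\PP^n}^{\! r})$ containing $B$ whose closure contains the lexicographic point. The closure is an irreducible closed set through a point lying on the single component $C$, so it is contained in $C$, and therefore $B \in C$.

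I expect the main obstacle to be checking that the padded limit really is the saturated lexicographic submodule $S^{r-s-1} \oplus L(\lambda) \oplus 0^s$. This requires taking the flat limit from Proposition~\ref{p:fam} and saturating it in each summand. If that causes trouble, I would instead build the family on $\mathcal{O}^2$ and transport smoothness by the argument of Lemma~\ref{l:sec}, adding free summands to the quotient in the same way.

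\textbf{Step 3: every point is smooth.} By \cite{Par96}*{Proposition~4}, every point of the Quot scheme specializes, through a one-parameter family of generic initial submodules, to a Borel-fixed point. Every Borel-fixed point has tangent dimension $D$, and the tangent dimension is upper semicontinuous along that family. So every point has tangent dimension at most $D$, and by connectedness every point lies on a component of dimension at least $D$, namely $C$. Hence every point is nonsingular, which gives the final "in particular" assertion.
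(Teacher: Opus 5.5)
Your proposal is correct and follows the paper's proof. The paper feeds the same three values ($1$ from Lemma~\ref{l:m=1}, $1$ or $2$ from Lemma~\ref{l:presM}, and $2$ from Lemma~\ref{l:h^0}) into Lemma~\ref{l:key}, then spreads smoothness by upper semicontinuity along the one-parameter degenerations to Borel-fixed points.

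Your Step~2 spells out something the paper leaves implicit, and the obstacle you anticipate does not arise. The padding $[\mathcal{O}_{\PP^n}^{2} \to \mathcal{F}] \mapsto [\mathcal{O}_{\PP^n}^{r} \to 0^{r-s-2} \oplus \mathcal{F} \oplus \mathcal{O}_{\PP^n}^{s}]$ is a morphism of Quot schemes, so it carries closures into closures. In fact Step~2 is optional. Once every point has tangent dimension at most $D$, each point of $C$ has local dimension at least $\dim C = D$ and is therefore regular. Hence $C$ meets no other component, and connectedness forces the whole Quot scheme to equal $C$.
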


\begin{proof} 
  Let $\mu \coloneqq (n^{e-1})$ be a nonempty subpartition of $\lambda$.
  Lemma~\ref{l:2b} identifies the Borel-fixed points in
  $\Quot^{\kern0.5pt q} \kern-0.5pt (\mathcal{O}_{\PP^n}^{\!
    r})$. Theorem~\ref{t:smooth} shows that the lexicographic point on
  $\Quot^{\kern0.5pt q} \kern-0.5pt (\mathcal{O}_{\PP^n}^{\! r})$ is
  nonsingular. When the Quot scheme has exactly two Borel-fixed points, one
  can prove that it is nonsingular by showing that the dimensions of the
  tangent spaces at these points are equal---the dimension of the tangent
  space at a point in family is an upper-semicontinuous function and there is
  a one-parameter family connecting any point to a Borel-fixed point.
  Therefore, it suffices to prove that the dimensions of the tangent spaces to
  $\Quot^{\kern0.5pt q} \kern-0.5pt (\mathcal{O}_{\PP^n}^{\! r})$ at the
  Borel-fixed points corresponding to the submodules
  $S^{r-s-1} \oplus L(\lambda) \oplus 0^{s} \subseteq S^{r}$ and
  $S^{r-s-2} \oplus L(1) \oplus L(\mu) \oplus 0^{s} \subseteq S^{r}$ are equal
  if and only if we have $\lambda \neq (2,1)$ and $\lambda \neq (2,2,1)$.
    
  We establish this equivalence by using Lemma~\ref{l:key}. First,
  Lemma~\ref{l:h^0} gives
  $h^0 \kern-0.5pt \bigl( S^1 \mathbin{\!\!/\!} L(\lambda) \kern-0.5pt \bigr)
  = 2$. Second, as $L(\mu) = \smash{\langle x_0^{e-1} \rangle}$, it follows
  that
  \[
    M \coloneqq \Hom_S \bigl( L(\mu), \, S^1 \mathbin{\!\!/\!} L(1) \kern-1.0pt \bigr) \kern-0.5pt 
    \cong \Hom_S \bigl( S^1(-e+1), \, S^1 \mathbin{\!\!/\!} L(1) \kern-1.0pt \bigr) \kern-0.5pt 
    \cong \bigl( S^1 \mathbin{\!\!/\!} L(1) \kern-1.0pt \bigr) \!(e-1)
  \]
  The minimal free $\ZZ$-graded resolution of the $S$-module
  $S^1 \mathbin{\!\!/\!} L(1) = S^1 \mathbin{\!\!/\!} \langle x_0, x_1,
  \dotsc, x_{n-1} \rangle$ is a Koszul complex having length $n$, so
  $\Ext^{n}_{S}(M, S^1) \cong \bigl( \kern-0.5pt S^1 \mathbin{\!\!/\!} L(1)
  \kern-0.5pt \bigr)(e-n-1)$ and $\Ext^{n+1}_{S}(M, S^1) = 0$. By
  Lemma~\ref{l:h^i}, we deduce that $h^0(M) = 1$.  Third, Lemma~\ref{l:presM}
  establishes that
  \[
    h^0 \Bigl( \Hom_S \bigl( L(1), \, 
    S^1 \mathbin{\!\!/\!} L(\mu) \kern-1.0pt \bigr) \kern-3.0pt \Bigr) 
    = \begin{cases}
      2 & \text{if $\lambda = (2,1)$ or $\lambda = (2,2,1)$,} \\[-2pt]
      1 & \text{otherwise.}
    \end{cases}
  \]
  Altogether, we obtain
  \[
    h^0 \bigl( S^1 \mathbin{\!\!/\!} L(\mu) \kern-1.0pt \bigr) 
    - h^0(M)
    - h^0 \Bigl( \Hom_S \bigl( L(1), \, 
    S^1 \mathbin{\!\!/\!} L(\mu) \kern-1.0pt \bigr) \kern-3.0pt \Bigr) 
    = 
    \begin{cases}
      -1 
      & \text{if $\lambda = (2,1)$ or $\lambda = (2,2,1)$,} \\[-2pt]
      0  
      & \text{otherwise.}
    \end{cases}
  \]
  so Lemma~\ref{l:key} completes the proof.
\end{proof}

The ($r= s+1$)-case is all that remains. The proof of the next proposition is
delightfully similar to Lemma~\ref{l:key}.

\begin{proposition}
  \label{p:s+1}
  Let $s$ be a nonnegative integer, let
  $\lambda \coloneqq (\lambda_{1}, \lambda_{2}, \dotsc, \lambda_{e})$ be an
  integer partition, and consider the polynomial
  $q(t) \coloneqq s \binom{t+n}{n} + p_{\lambda}(t) \in \QQ[t]$. When
  $r = s + 1$, the scheme
  $\Quot^{\kern0.5pt q} \kern-0.5pt (\mathcal{O}_{\PP^n}^{\! r})$ is singular
  if and only if the scheme
  $\Hilb^{\kern0.5pt p_{\lambda}} \kern-0.5pt (\mathcal{O}_{\PP^n}^{\! r})$ is
  singular.
\end{proposition}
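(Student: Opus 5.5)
The plan is to compare the two schemes at their Borel-fixed points, and then propagate to all points by the degeneration and upper-semicontinuity argument used in Corollary~\ref{c:easy} and Proposition~\ref{p:n^e1}. Reading the statement's Hilbert scheme as $\Hilb^{\kern0.5pt p_{\lambda}} \kern-0.5pt (\PP^n)$, both schemes are path-connected and every point degenerates to a Borel-fixed point. So each is smooth if and only if it is smooth at all of its Borel-fixed points.

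\textbf{Step 1 (Borel-fixed points match).} Since $r = s+1$, conditions (B1)--(B3) force a Borel-fixed submodule with Hilbert polynomial $q$ to have the form $J \oplus 0^{s} \subseteq S^{r}$. The chain $I_{(1)} \supseteq \dotsb \supseteq I_{(r)}$ has exactly $s$ zero ideals, and the remaining ideal $J$ is a saturated Borel-fixed ideal with Hilbert polynomial $p_{\lambda}$. Conversely, every such $J$ gives one. This yields a bijection between the Borel-fixed points of $\Quot^{\kern0.5pt q} \kern-0.5pt (\mathcal{O}_{\PP^n}^{\! r})$ and those of $\Hilb^{\kern0.5pt p_{\lambda}} \kern-0.5pt (\PP^n)$.

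\textbf{Step 2 (tangent spaces).} As in the proof of Proposition~\ref{p:dim}, the tangent space at $J \oplus 0^{s}$ is the space of global sections of the sheaf associated to $\Hom_S(J, S^1/J) \oplus \Hom_S(J,S)^{s}$. To compute the second summand, write $J = x_0^{a}\, J'$ with $J'$ of codimension at least $2$. The hypersurface part is forced by the leading coefficients of the Hilbert polynomial, so $a = a_n$. Then $\Hom_S(J,S) \cong S(a_n)$ exactly as in Lemma~\ref{l:dual}, and
\[
  \dim T_{J \oplus 0^s} \Quot^{\kern0.5pt q} \kern-0.5pt (\mathcal{O}_{\PP^n}^{\! r}) \;=\; \dim T_{J} \Hilb^{\kern0.5pt p_{\lambda}} \kern-0.5pt (\PP^n) + s \tbinom{n+a_n}{n}.
\]
This correction term is independent of which Borel-fixed point is chosen.

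\textbf{Step 3 (local dimensions).} Set $c \coloneqq \binom{n+a_n}{n}$. I will show that the local dimension of the Quot scheme at $J \oplus 0^s$ equals the local dimension of the Hilbert scheme at $J$ plus $sc$.

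For the lower bound, I generalize the construction in Lemma~\ref{l:s+1}. Near $J$, every ideal of the Hilbert scheme has the form $f_0 \cdot J'$ with $\deg f_0 = a_n$. Adjoin $s$ further forms $f_1, \dotsc, f_s$ of degree $a_n$ and take the cokernel of the matrix $[f_i g_j]$. This gives a family whose dimension is the local Hilbert dimension plus $sc$.

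For the upper bound, I will show that every point of Quot near $J \oplus 0^s$ arises this way. The kernel $K \subseteq S^{r}$ has rank one and is torsion-free. Let $(f_0, \dotsc, f_s)$ be the column vector obtained by saturating $K$; it has degree-$a_n$ entries with no common factor. Then $K = (f_0, \dotsc, f_s)^{\mathsf T} \cdot J'$ for an ideal $J'$. Sending $K$ to $f_0 J'$, or more intrinsically to the pair consisting of $J'$ and the point $[f_0 : \dotsb : f_s]$, gives locally a map to the Hilbert scheme with fibres of dimension $sc$. This realizes the ``locally a product'' description mentioned in the overview.

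\textbf{Step 4 (conclusion).} Combining Steps 2 and 3, the Quot scheme is smooth at $J \oplus 0^s$ if and only if the Hilbert scheme is smooth at $J$. Together with Step 1 and the reduction to Borel-fixed points, this gives the proposition.

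The main obstacle is the upper bound in Step 3. It requires showing that flat families of rank-one quotients really factor as a vector of degree-$a_n$ forms times a flat family of ideals, with the forms varying in an open set of $(S_{a_n})^{s+1}$ modulo scalars. The first thing I would try is to work over an Artinian base and prove that the divisorial content of $K$ deforms flatly, using that $\Hom_S(K,S)$ is locally free of rank one as computed in Step 2.
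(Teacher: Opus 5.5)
\textbf{Gap: your Step~3 is unproved, and it is not needed.}

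Your Steps~1 and~2 agree with the paper's proof: the bijection $J \leftrightarrow J \oplus 0^{s}$ of Borel-fixed points, and the constant offset $s\binom{n+a_n}{n}$ coming from $\Hom_S(J,S) \cong S(a_n)$.

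The gap is Step~3. You do not prove the upper bound on the local dimension of the Quot scheme at $J \oplus 0^{s}$, and this is not a routine step. It is essentially the ``locally a product'' structure that the paper only mentions, without proof, in Remark~\ref{r:bun}.

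Your sketch of it is also wrong at the very point you need. Take $J = x_0^{a_n} J'$ with $a_n > 0$. At $J \oplus 0^{s}$, the reflexive hull of the kernel is spanned by $(x_0^{a_n}, 0, \dotsc, 0)$, whose entries do share a common factor. The primitive vector $(1,0,\dotsc,0)$ has degree $0$, not $a_n$. So the decomposition can only be $K = \mathbf{f} \cdot J'$ with $\mathbf{f}$ merely a nonzero vector of degree-$a_n$ forms. You would still have to show this decomposition is flat in families. Even granting your lower bound, you only obtain one implication: $\Hilb$ smooth at $J$ gives $\Quot$ smooth at $J \oplus 0^{s}$.

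\textbf{The missing idea: compare with the lexicographic point.} You never need local dimensions. Three facts are available:
\begin{itemize}
\item the lexicographic point is smooth on both schemes (Theorem~\ref{t:smooth}, which covers $r=1$);
\item both schemes are connected;
\item every point degenerates to a Borel-fixed point.
\end{itemize}
Hence each scheme is smooth if and only if every Borel-fixed point has the same tangent-space dimension $d$ as its lexicographic point.

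For the forward direction, suppose every Borel-fixed point has tangent-space dimension $d$. Upper semicontinuity then bounds every tangent-space dimension by $d$, which is the dimension of the lexicographic component. So each point of that component is smooth and lies on no other component. Connectedness then leaves no room for further components. For the converse, a smooth connected scheme is irreducible, so its tangent-space dimension is constant.

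Your Step~2 shows that passing from $\Hilb$ to $\Quot$ adds the same constant $s\binom{n+a_n}{n}$ at every Borel-fixed point, including the lexicographic one. So this criterion holds for one scheme exactly when it holds for the other. That is the paper's argument.

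Completing your Step~3 would give something finer: a pointwise comparison at each Borel-fixed point, and the local-product structure. But it costs a genuinely harder flatness argument that the proposition does not require.
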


\begin{proof}
  Let $a_{n}$ denote the multiplicity of the part $n$ in the integer partition
  $\lambda$. The hypothesis $r = s+1$ implies that every Borel-fixed point in
  $\Quot^{\kern0.5pt q} \kern-0.5pt (\mathcal{O}_{\PP^n}^{\! r})$ corresponds
  to a submodule $I \oplus 0^{r-1} \subseteq S^{r}$, where $I$ is a
  Borel-fixed ideal such that the Hilbert polynomial of
  $S^1 \mathbin{\!\!/\!} I$ is
  $p_{\lambda}(t) \coloneqq \smash{\sum_{i=1}^{e}
    \binom{t+\lambda_i-i}{\lambda_i-1}} \in \QQ[t]$.  The tangent space to the
  Quot scheme at this point is the space of global sections of the sheaf
  associated to the $\mathbb{Z}$-graded $S$-module
  \[
    \Hom_S ( I, S^1 \mathbin{\!\!/\!} I \oplus S^{r-1})
    = \Hom_S( I, S^1 \mathbin{\!\!/\!} I )
    \oplus \kern-1.0pt \left( 
      \bigoplus_{i=1}^{r-1} \Hom_S ( I, S) 
      \kern-4.0pt \right) \, . 
  \]
  Since the Hilbert polynomial of the quotient $S^1 \mathbin{\!\!/\!} I$ is
  $p_{\lambda}(t)$, the greatest common divisor of the minimal monomial
  generators of $I$ has degree $a_{n}$; compare with
  \cite{Fog68}*{Theorem~1.4}. Hence, Lemma~\ref{l:dual} shows that
  \[
    h^0 \kern-0.5pt \Bigl( \Hom_S \bigl( L(\lambda), S \bigr) \kern-2.5pt \Bigr) 
    = \binom{n+a_n}{n}       
    = h^0  \bigl( \Hom_S ( I, S) \kern-1.0pt \bigr) \, , 
  \]
  where $L(\lambda)$ is the lexicographic ideal; see
  Remark~\ref{r:gen}. Combined with Proposition~\ref{p:dim}, we see that the
  difference between the tangent-space dimensions to the Quot scheme at the
  lexicographic point and another Borel-fixed point is
  \[
    h^0 \kern-1.0pt \Bigl( \kern-0.5pt \Hom_S \bigl(  L(\lambda), \,
    S^1 \mathbin{\!\!/\!} L(\lambda) \kern-1.0pt \bigr) \kern-3.0pt \Bigr) \kern-1.0pt
    - h^0 \bigl(  \Hom_S (I, \, S^1 \mathbin{\!\!/\!} I) \kern-1.0pt \bigr) 
    \, .
  \]
  This expression is also the difference between the tangent space dimensions
  to the Hilbert scheme
  $\Hilb^{\kern0.5pt p_{\lambda}} \kern-0.5pt (\mathcal{O}_{\PP^n}^{\! r})$ at
  its lexicographic point and the Borel-fixed point corresponding to the ideal
  $I$. Theorem~\ref{t:smooth} shows that the lexicographic points on both
  $\Quot^{\kern0.5pt q} \kern-0.5pt (\mathcal{O}_{\PP^n}^{\! r})$ and
  $\Hilb^{\kern0.5pt p_{\lambda}} \kern-0.5pt (\mathcal{O}_{\PP^n}^{\! r})$
  are nonsingular. Since
  $\Quot^{\kern0.5pt q} \kern-0.5pt (\mathcal{O}_{\PP^n}^{\! r})$ and
  $\Hilb^{\kern0.5pt p_{\lambda}} \kern-0.5pt (\mathcal{O}_{\PP^n}^{\! r})$
  are path-connected, having a Borel-fixed point whose tangent-space dimension
  is different than the lexicographic point is equivalent to being singular.
\end{proof}

\begin{remark}
  \label{r:bun}
  When $r = s + 1$, one may generalize Proposition~\ref{p:s+1} to show that
  $\Quot^{\kern0.5pt q} \kern-0.5pt (\mathcal{O}_{\PP^n}^{\! r})$ is locally a
  product of $\mathbb{P}^m$, where $m \coloneqq r \binom{n+a_{n}}{a_{n}}$, and
  $\Hilb^{\kern0.5pt p_{\lambda}} \kern-0.5pt (\mathcal{O}_{\PP^n}^{\!
    r})$. By analogy with \cite{Fog68}*{Theorem~1.4}, one might hope that this
  a global product; compare with Lemma~\ref{l:morp}.
\end{remark}

Since the numerical conditions on the integer partition
$\lambda \coloneqq (\lambda_1, \lambda_2, \dotsc, \lambda_e)$ that
characterize precisely when
$\Hilb^{\kern0.5pt p_{\lambda}} \kern-0.5pt (\PP^n)$ is smooth are known, our
last case can be rephrased as follows.

\begin{remark}[Combinatorial conditions on $\lambda$ in the ($r=s+1$)-case]
  \label{r:Hil}
  Using \cite{SS23}*{Theorem~A}, the ($r=s+1$)-case in Theorem~\ref{t:main} becomes
  \begin{compactitem}[\phantom{W} $\bullet$]
  \item $r=s+1$ and one of the following conditions holds:
    \begin{compactenum}[\phantom{W} \upshape 1.]
    \item $n = 2$;
    \item $\lambda_e \geqslant 2$;
    \item $\lambda =(n^{e-2}, \lambda_{e-1}, 1)$ where
      $n \geqslant \lambda_{e-1} \geqslant 1$ and $e \geqslant 2$;
    \item
      $\lambda = (n^{e-d-3}, \lambda_{e-\smash{d}-2}^{\kern-1.0pt d+2}, 1)$
      where $n-1 \geqslant \lambda_{e-d-2} \geqslant 3$ and
      $e-3 \geqslant d \geqslant 0$ and;
    \item $\lambda = (n^{e-d-5}, 2^{d+4}, 1)$ where $n \geqslant 3$ and
      $e-5 \geqslant d \geqslant 0$;
    \item $\lambda = (n^{e-3}, 1^3)$ where $n \geqslant 3$ and
      $e \geqslant 3$;
    \item $\lambda = (1)$ or $\lambda = ()$.
    \end{compactenum}
  \end{compactitem}
  The ($\lambda \!=\! (n \!+\! 1)$)-case in \cite{SS23}*{Theorem~A} is
  subsumed by the ($r=s$)-case in Theorem~\ref{t:main}.
\end{remark}

We now prove the main theorem of the paper.

\begin{proof}[Proof of Theorem~\ref{t:main}]   
  Lemma~\ref{l:smooth} reduces the study of smoothness to the base scheme
  being $\Spec(\kk)$.  By Proposition~\ref{p:nonempty}, the scheme
  $\Quot^{\kern0.5pt q} \kern-0.5pt (\mathcal{O}_{\PP^n}^{\! r})$ is nonempty
  if and only if there exist a nonnegative integer $s$ and an integer
  partition $\lambda \coloneqq (\lambda_1, \lambda_2, \dotsc, \lambda_e)$ such
  that $r > s$, $n \geqslant \lambda_1$, and
  \[
    q(t)
    = s \, \binom{t+n}{n} + p_{\lambda}(t)
    = s \, \binom{t+n}{n} + \sum_{i=1}^{e} \binom{t+\lambda_i-i}{\lambda_i-1} 
    \, ;
  \]
  or $r = s$, $\lambda = ()$, and $q(t) = r \, \binom{t+n}{n}$.  In the
  ($r=s)$-case, the only quotient is
  $\mathcal{F} = \mathcal{O}_{\PP^n}^{\!r}$, so the Quot scheme is just a
  single point and, thereby, smooth. When $r = s+1$, Proposition~\ref{p:s+1}
  establishes the desired characterization.

  The ($r \geqslant s+2$)-case remains. When $\lambda_{e} \geqslant 2$,
  $\lambda = (1)$, or $\lambda = ()$, Corollary~\ref{c:easy} demonstrates that
  $\Quot^{\kern0.5pt q} \kern-0.5pt (\mathcal{O}_{\PP^n}^{\! r})$ is
  nonsingular.  When $n \geqslant 2$, Proposition~\ref{p:sing} establishes
  that $\Quot^{\kern0.5pt q} \kern-0.5pt (\mathcal{O}_{\PP^n}^{\! r})$ is
  singular whenever the integer partition $\lambda$ has at least one part
  equal to $1$ and is not equal to $(n^{e-1}, 1)$ for some $e \geqslant
  2$. Furthermore, when $n \geqslant 2$, $e \geqslant 2$, and
  $\lambda = (n^{e-1}, 1)$, Proposition~\ref{p:n^e1} shows that
  $\Quot^{\kern0.5pt q} \kern-0.5pt (\mathcal{O}_{\PP^n}^{\! r})$ is
  nonsingular exactly when $\lambda \neq (2,1)$ and $\lambda \neq
  (2,2,1)$. Finally, when $n=1$, \cite{Str87}*{Theorem 2.1} proves that any
  Quot scheme is nonsingular. This exhausts all possible integer partitions,
  so the proof is complete.
\end{proof}

\section{Two Singular Examples}
\label{s:exa}

\noindent
This final section describes, in detail, two singular Quot schemes.  Both
resemble the Hilbert scheme compactifying twisted cubic curves~\cite{PS85} by
having two smooth rational irreducible components of unequal dimensions whose
intersection is also smooth and rational.  These Quot schemes also explain the
discrepancy between second and third condition in the ($r\geqslant s+2$)-case
of Theorem~\ref{t:main}.  The standard graded polynomial ring
$S \coloneqq \kk[x_0,x_1, \dotsc, x_n]$ continues to denote the homogeneous
coordinate ring of $\PP^n$.

To analyze examples, we capitalize on a specific construction of the Quot
scheme $\Quot^{\kern0.5pt q} \kern-0.5pt (\mathcal{O}_{\PP^n}^{\! r})$ that
generalizes the Gotzmann approach for Hilbert schemes; compare with
\cite{IK99}*{Proposition~C.28}.  We assume that the degree of the Hilbert
polynomial $q$ is less than $n$; this is equivalent to having $s = 0$ by
Proposition~\ref{p:nonempty}.  Let $e$ be the number of parts in the integer
partition $\lambda$ associated to $q$ (also known as the \emph{Gotzmann
  number}).  Set $k \coloneqq q(e)$ to be the nonnegative integer obtained by
evaluating the polynomial $q$ at $e$. The $e$th homogeneous part $S^r_{e}$ of
the free $S$-module of rank $r$ is a $\kk$-vector space of dimension
$\smash{d \coloneqq r \binom{e+n-1}{n-1}}$.  We may regard a point $V$ in the
Grassmannian $\Gr(k, d)$ as a linear subspace in $S^r_e$ of dimension
$d-k$. Multiplication by all linear forms in $S$ produces the linear subspace
$S_1 \cdot V$ in $S^{r}_{e+1}$ and the \emph{corank} of $S_1 \cdot V$ is the
dimension of the quotient $S^{r}_{e+1} \mathbin{\!\!/\!}  (S_1 \cdot V)$.  For
any positive integers $n$ and $r$, the Quot scheme
$\smash{\Quot^{\kern0.5pt q} \kern-0.5pt (\mathcal{O}_{\PP^n}^{\! r})}$ is
equal to the closed subscheme
\[
  \bigl\{ V \in \Gr(k, d) \mathrel{\big|} \operatorname{corank}(S_1 \cdot V) =
  q(e+1) \bigr\}
\]  
in the Grassmannian $\Gr(k, d)$; see \cite{Del16}*{Theorem~5.1}.

This construction produces local equations for
$\smash{\Quot^{\kern0.5pt q} \kern-0.5pt (\mathcal{O}_{\PP^n}^{\! r})}$ in a
neighbourhood of a Borel-fixed point.  Each point $V$ in $\Gr(k,d)$ is
represented by the span of $d-k$ linearly independent vectors in $S^r_{e}$.
Choosing monomials as a $\kk$-vector space basis for $S^r_{e}$, the linear
subspace $V$ is the row span of a $\bigl( \! (d-k) \times d \bigr)$-matrix.
The standard affine charts on $\Gr(k,d)$ are indexed by a list of $d-k$
columns or, equivalently, $d-k$ monomials in $S^{r}_{e}$.  By listing the
monomial generators of the $e$th truncation of a submodule corresponding to a
Borel-fixed point, we obtain an affine neighbourhood of this Borel-fixed
point.  For the $\bigl( \! (d-k) \times d \bigr)$-matrices in an affine chart,
the submatrix determined by the listed columns is the identity matrix
$\mathbf{I}_{d-k}$ and the complementary entries correspond to a point in
$\AA^{k(d-k)}$.  The condition $\operatorname{corank}(S_1 \cdot V) = q(e+1)$
thereby identifies
$\smash{\Quot^{\kern0.5pt q} \kern-0.5pt (\mathcal{O}_{\PP^n}^{\! r})}$ with a
determinantal subscheme.  In an affine chart, the Quot scheme
$\smash{\Quot^{\kern0.5pt q} \kern-0.5pt (\mathcal{O}_{\PP^n}^{\! r})}$ is
defined by the ideal of all $\bigl(q(e+1)+1 \! \bigr)$-minors of the
$\bigl( (n+1)(d-k) \times r \binom{e+n}{n} \bigr)$-matrix corresponding to the
linear subspace $S_1 \cdot V$; compare with \cite{IK99}*{Proposition~C.30} or
\cite{HS04}*{\S4}.  To work efficiently with this affine subscheme, it is
important to eliminate the many variables that appear as a linear term in some
generator of this nonhomogeneous ideal.  Fortuitously, the required affine
subschemes are just small enough to be effectively computed in our
examples---we are able to decompose the affine subschemes into their
irreducible components.

\begin{example}
  \label{e:21}
  Let $Q^{(2,1)} \coloneqq \smash{\Quot^{t+2}(\mathcal{O}_{\PP^{2}}^{\!2})}$
  denote the Quot scheme parametrizing quotients of the trivial rank-$2$
  vector bundle on the projective plane having Hilbert polynomial $t+2$.  This
  Hilbert polynomial corresponds to the integer partition
  $\lambda \coloneqq (2,1)$, because
  $t+2 = \smash{\binom{t+1}{1} + \binom{t-1}{0}}$.  The Quot scheme
  $Q^{(2,1)}$ has precisely two Borel-fixed points corresponding to the
  cokernels of the homogeneous $S$-linear maps
  \begin{align*}
    S^2
    &\xleftarrow{{ \quad
      \mathbf{B}_0 \coloneqq \begin{bmatrix}
        1 & 0 & 0  \\
        0 & x_0^2 & x_0^{} \, x_1^{}  \\
      \end{bmatrix} \quad}}
    S \oplus S^2(-2) 
    && \text{and}
    & S^2 
    &\xleftarrow{{ \quad
      \mathbf{B}_1 \coloneqq \begin{bmatrix}
        x_0^{} & x_1^{} & 0 \\
        0 & 0 & x_0^{} \\
      \end{bmatrix} \quad}}
    S^3(-1)  \, .
  \end{align*}
  The lexicographic point corresponds to $\coker \mathbf{B}_0$.  By
  calculating
  $h^0 \bigl( \Hom_S( \image \mathbf{B}_{i}, \coker \mathbf{B}_{i}) \!\bigr)$
  in \emph{Macaulay2}~\cite{M2} for all $0 \leqslant i \leqslant 1$, we see
  that the tangent spaces at the points determined by $\coker \mathbf{B}_0$
  and $\coker \mathbf{B}_1$ have dimension $6$ and $7$ respectively.  It
  follows that $Q^{(2,1)}$ is singular.  A local analysis in a neighbourhood
  of the Borel-fixed point shows that the point corresponding to
  $\coker \mathbf{B}_1$ lies on two distinct irreducible components: the
  transverse intersection of two affine spaces having dimensions $6$ and $5$.
  The lexicographic point $\coker \mathbf{B}_0$ lies on a unique irreducible
  component because it is a smooth point; see Theorem~\ref{t:smooth}.  We
  conclude that both irreducible components of $Q^{(2,1)}$ are smooth.

  \begin{figure}[!ht]
    \addtocounter{lemma}{1}
    \centering
    \begin{tikzpicture}[xscale=1.0, yscale=0.5]
      \path[draw, very thick, name path=border1] (0,0) to[out=-10,in=130] (6,-2);
      \path[draw, very thick, name path=line1] (6,-2) to[out=0,in=200] (12,1);
      \path[draw, very thick, name path=border2] (12,1) to[out=130,in=-10] (5.5,3.7);
      \path[draw, very thick, name path=line2] (5.5,3.7) to[out=200,in=0] (0,0);
      \shade[left color=gray!20,right color=gray!70] 
      (0,0) to[out=-10,in=130] (6,-2) to[out=0,in=200]
      (12,1) to[out=130,in=-10] (5.5,3.7) to[out=200,in=0] cycle;
      \path[draw, very thick, name path=border3] (-1,-4) to[out=20,in=220] (3,3);
      \path[draw, very thick, name path=border6] (3,3) to[out=10,in=140] (9,1);
      \path[draw, very thick, name path=border4] (9,1) to[out=220,in=20] (6,-7);
      \path[draw, very thick, name path=border5] (6,-7) to[out=140,in=10] (-1,-4);
      \path[name intersections={of=border3 and line2, by={a}}];
      \path[name intersections={of=border4 and line1, by={b}}];
      \shade[top color=gray!10,bottom color=gray!90,opacity=.30] 
      (-1,-4) to[out=20,in=220] (3,3) to[out=10,in=140] 
      (9,1) to[out=220,in=20] (6,-7) to[out=140,in=10] (-1,-4);
      \path[draw, ultra thick, densely dotted, name path=int] (a) to[out=-10,in=130] (b);
      \path[name path=line3] (6,-2) -- (6,2);
      \path[name intersections={of=int and line3, by={c}}];
      \node at (1.1,-3.3) {$5$-dim};
      \node at (10.4,1.3) {$6$-dim};
      \node at (0,2) (label) {$4$-dim};
      \draw[-Latex, thick, shorten <= -3pt] (label) -- (a);
      \node at (c) (B1) {$\bullet$};
      \node at (8.5,2.5) (B0) {$\bullet$};
      \node at (12,3) (B0l) {$\coker \mathbf{B}_0$};
      \draw[-Latex, thick, decorate, decoration={snake, pre length=3pt,post length=3pt}, shorten >= -5pt] (B0l) -- (B0);
      \node at (9.5,-5) (B1l) {$\coker \mathbf{B}_1$};
      \draw[-Latex, thick, decorate, decoration={snake, pre length=3pt,post length=3pt},
      shorten >= -5pt] (B1l) -- (B1);
    \end{tikzpicture}
    \caption{Cartoon of the Quot scheme $\Quot^{t+2}(\mathcal{O}_{\PP^2}^{\!2})$}
    \label{f:21}
  \end{figure}
  
  To be more explicit, one irreducible component $X_0$ of the scheme
  $Q^{(2,1)}$ arises as the closure of the locus of points corresponding to
  the cokernels, parametrized over
  $\AA^{\!6} = \Spec\bigl(\kk[z_0,z_1, \dotsc, z_5] \bigr)$, of the
  homogeneous $S$-linear map
  \[
    S^2 
    \xleftarrow{{ \quad
        \mathbf{F}_{0} \coloneqq 
        \begin{bmatrix}
          x_0 + z_1 \, x_2
          & x_1 + z_2 \, x_2 
          & z_0 \, x_2 \\
          z_5 \bigl( -z_3 \, x_1 + (z_1 - z_4 + z_0 z_5) \, x_2 \bigr)
          & z_5(x_1 + z_2 \, x_2) 
          & x_0 + z_3 \, x_1 + z_4 \, x_2   \\
        \end{bmatrix} \quad}}
    S^3(-1)  \, .
  \]
  When $z_0 = z_1 = \dotsb = z_5 = 0$, we recover the matrix $\mathbf{B}_{1}$.
  For any choice of parameters, one verifies that these three columns form a
  Gr\"obner basis (where the monomial order has term over position with the
  lexicographic order on terms in $S$ and position down) for the image of
  $\mathbf{F}_{0}$.  Hence, every element in this family has Hilbert
  polynomial $t+2$ and corresponds to a point in $Q^{(2,1)}$. One also
  confirms that the two relations
  \[
    \mathbf{F}_{0} 
    \begin{bmatrix}
      -z_0 x_2 & 0 \\
      z_0 z_3 x_2 & -z_0 x_2 \\
      x_0 -z_3 x_1 + (z_1 - z_2 z_3) x_2 & x_1 + z_2 x_2 \\
    \end{bmatrix}
    =
    \begin{bmatrix}
      0 & 0 \\
      x_0^2 - z_3^2 x_1^2 + \dotsb 
      & x_0^{} x_1^{} + z_3^{} x_1^2+ \dotsb 
    \end{bmatrix} \, , 
  \]
  together with the columns of $\mathbf{F}_{0}$, form a Gr\"obner basis (where
  the monomial order has position over term with position down and the
  lexicographic order on terms in $S$) for the image of $\mathbf{F}_{0}$.
  Setting $z_1 = z_2 = \dotsb = z_5 = 0$ and letting $z_0 \to \infty$, we
  obtain $\coker \mathbf{B}_0$ as a flat limit of $\coker \mathbf{F}_{0}$.  It
  follows that the lexicographic point lies on $X_0$.
    
  The second irreducible component $X_1$ of the Quot scheme $Q^{(2,1)}$ arises
  as the closure of the locus of points corresponding to the cokernels,
  parametrized over $\AA^{\!5} = \Spec\bigl(\kk[z_2,z_3, \dotsc, z_6] \bigr)$,
  of the homogeneous $S$-linear map
  \[
    S^2 
    \xleftarrow{{ \quad
        \mathbf{F}_{1} \coloneqq 
        \begin{bmatrix}
          x_0 + (z_4 - z_2 z_3) \, x_2 
          & x_1 + z_2 \, x_2 
          & 0 \\
          z_3 (- z_5 \, x_1 - z_6 \, x_2)  
          & z_5 \, x_1 + z_6 \, x_2
          & x_0 + z_3 \, x_1 + z_4 \, x_2   \\
        \end{bmatrix} \quad}}
    S^3(-1)  \, .
  \]
  When $z_2 = z_3 = \dotsb = z_6 = 0$, we recover the matrix $\mathbf{B}_{1}$.
  For any choice of parameters, one verifies that these three columns form a
  Gr\"obner basis (where the monomial order has term over position with the
  lexicographic order on terms and position down) for the image of
  $\mathbf{F}_{1}$.  Hence, every element in this family has Hilbert
  polynomial $t+2$ and corresponds to a point in $Q^{(2,1)}$.

  The intersection $X_0 \cap X_1$ of these two irreducible components is the
  closure of the locus of points corresponding to the cokernels, parametrized
  over $\AA^{\!4} = \Spec\bigl(\kk[z_2, z_3, z_4, z_5] \bigr)$, of the
  homogeneous $S$-linear map
  \[
    S^2 
    \xleftarrow{{ \quad
        \begin{bmatrix}
          x_0 + (z_4 - z_2 z_3) \, x_2 
          & x_1 + z_2 \, x_2 
          & 0 \\
          z_3 z_5 (-x_1 - z_2 \, x_2)  
          & z_5 (x_1 + z_2 \, x_2)
          & x_0 + z_3 \, x_1 + z_4 \, x_2   \\           
        \end{bmatrix} \quad}}
    S^3(-1)  \, .
  \]
  When $z_2 = z_3 = z_4 = z_5 = 0$, we again recover the matrix
  $\mathbf{B}_{1}$. Since the neighbourhood of the unique Borel-fixed point in
  $X_0 \cap X_1$ is an affine space of dimension $4$, this intersection is
  smooth. Moreover, the intersection $X_0 \cap X_1 $ is the divisor defined by
  the ideal $\langle z_6-z_2 z_5 \rangle$ in $X_1$, and is the complete
  intersection defined by the ideal $\langle z_0, z_1-z_4+z_2z_3 \rangle$ in
  $X_0$.
\end{example}

\begin{example}
  \label{e:221}
  Let
  $Q^{(2,2,1)} \coloneqq \smash{\Quot^{2t+2}(\mathcal{O}_{\PP^{2}}^{\!2})}$ be
  the Quot scheme parametrizing quotients of the trivial rank-$2$ vector
  bundle on the projective plane having Hilbert polynomial $2t+2$.  This
  Hilbert polynomial corresponds to the integer partition
  $\lambda \coloneqq (2,2,1)$, because
  $2t+2 = \smash{\binom{t+1}{1} + \binom{t}{1} + \binom{t-2}{0}}$.  The Quot
  scheme $Q^{(2,2,1)}$ has precisely three Borel-fixed points corresponding to
  the cokernels of the homogeneous $S$-linear maps
  \begin{align*}
    S^2
    \xleftarrow{{ \quad
    \mathbf{B}_0 \coloneqq \begin{bmatrix}
      1 & 0 & 0  \\
      0 & x_0^3 & x_0^{2} \, x_1^{}  \\
    \end{bmatrix} \quad}}
        &S \oplus S^2(-3) \, , &
                                 S^2   
        &\xleftarrow{{ \quad
          \mathbf{B}_1 \coloneqq \begin{bmatrix}
            x_{0}^{} & x_{1}^{} & 0  \\
            0 & 0 & x_0^{2}  \\
          \end{bmatrix} \quad}}
    S^1(-1) \oplus S(-2) \, , \\ 
        & \text{and} &
                       S^2
        &\xleftarrow{{ \quad
          \mathbf{B}_2 \coloneqq \begin{bmatrix}
            x_0^{} & 0  \\
            0 & x_0^{} \\
          \end{bmatrix} \quad}}
    S^2(-1)   \, .
  \end{align*}
  The lexicographic point corresponds to $\coker \mathbf{B}_0$.  By
  calculating
  $h^0 \bigl( \Hom_S( \image \mathbf{B}_{i}, \coker \mathbf{B}_{i}) \!\bigr)$
  in \emph{Macaulay2}~\cite{M2} for all $0 \leqslant i \leqslant 2$, we see
  that the tangent spaces at the points determined by $\coker \mathbf{B}_0$,
  $\coker \mathbf{B}_0$, and $\coker \mathbf{B}_1$ have dimension $9$, $10$,
  and $8$ respectively.  It follows that the Quot scheme $Q^{(2,2,1)}$ is
  singular.  A local analysis in a neighbourhood of the Borel-fixed point
  shows that the point corresponding to $\coker \mathbf{B}_1$ lies on two
  distinct irreducible components: the transverse intersection of two affine
  spaces having dimensions $9$ and $8$.  The lexicographic point
  $\coker \mathbf{B}_0$ lies on a unique irreducible component because it is a
  smooth point; see Theorem~\ref{t:smooth}.  A local analysis also shows that
  the point corresponding to $\coker \mathbf{B}_2$ also lies on a unique
  irreducible component having dimension $8$.  We conclude that both
  irreducible components of $Q^{(2,2,1)}$ are smooth.
    
  To be more explicit, one irreducible component $X_0$ of the scheme
  $Q^{(2,2,1)}$ arises as the closure of the locus of points corresponding to
  the cokernels, parametrized over
  $\AA^{\!9} = \Spec\bigl(\kk[z_0,z_1, \dotsc, z_8] \bigr)$, of the transpose
  of the homogeneous $S$-linear map
  \[
    S^2(1) \oplus S(2) 
    \xleftarrow{{\quad
        \mathbf{F}_0^{\textsf{T}} \coloneqq
        \begin{bmatrix}
          x_0^{} + z_4^{} \, x_2^{}
          & z_3^{} (x_0^{} + z_4^{} \, x_2^{}) \\
          x_1^{} + z_2^{} \, x_2^{} 
          & z_3^{} (x_1^{} + z_2^{} \, x_2^{}) \\
          z_0^{} \, x_2^{2} 
          & x_0^2 + z_5^{} \, x_0^{} x_1^{} + z_7^{} \, x_0^{} x_2^{} 
          + z_6^{} \, x_1^2 + z_8^{} \, x_{1}^{} x_{2}^{} + z_1^{} \, x_{2}^{2} \\
        \end{bmatrix} \quad}}
    S^2  \, .
  \]
  When $z_0 = z_1 = \dotsb = z_8 = 0$, we recover the matrix $\mathbf{B}_{1}$.
  For any choice of parameters, one verifies that these three columns form a
  Gr\"obner basis (where the monomial order has term over position with the
  lexicographic order on terms in $S$ and position down) for the image of
  $\mathbf{F}_0$.  Hence, every element in this family has Hilbert polynomial
  $2t+2$ and corresponds to a point in $Q^{(2,2,1)}$. One also confirms that
  the two relations
  \[
    \mathbf{F}_{0} 
    \begin{bmatrix}
      0 & -z_0^{} x_2^{2} \\
      -z_0^{} x_2^{2} & z_0^{} z_5^{} x_2^{2} \\
      x_1 + z_2 x_2 & \!x_0 -z_5 x_1 + (z_4 - z_2 z_5) x_2 \\
    \end{bmatrix}
    \! = \!
    \begin{bmatrix}
      0 & 0 \\
      x_0^{2} x_1^{} + z_2^{} x_0^2 x_2^{} + \dotsb 
      & x_0^3 + (z_4^{}+z_7^{} - z_2^{} z_5^{}) x_0^2 x_2^{} + \dotsb 
    \end{bmatrix} \, , 
  \]
  together with the columns of $\mathbf{F}_{0}$, form a Gr\"obner basis (where
  the monomial order has position over term with position down and the
  lexicographic order on terms in $S$) for the image of $\mathbf{F}_{0}$.
  Setting $z_1 = z_2 = \dotsb = z_8 = 0$ and letting $z_0 \to \infty$, we
  obtain $\coker \mathbf{B}_0$ as a flat limit of $\coker \mathbf{F}_{0}$.  It
  follows that the lexicographic point lies on $X_0$.

  The second irreducible component $X_1$ of the Quot scheme $Q^{(2,2,1)}$
  arises as the closure of the locus of points corresponding to the cokernels,
  parametrized over $\AA^{\!8} = \Spec\bigl(\kk[z_2,z_3, \dotsc, z_9] \bigr)$,
  of the transpose of the homogeneous $S$-linear map
  {\small%
    \[
      S^2(1) \oplus S(2)
      \xleftarrow{{\hspace{-7pt}
          \mathbf{F}_{1}^{\textsf{T}} \coloneqq
          \begin{bmatrix}
            x_0 + z_4 \, x_2 
            & z_3(x_0 + z_4 \, x_2) - z_9 \bigl( z_5 \, x_0 + z_6 \, x_1 
            + (z_8 - z_2 z_6) \, x_2 \bigr) \\
            x_1 + z_2 \, x_2 
            & z_3(x_1 + z_2 \, x_2) 
            + z_9 \bigl( x_0 + (z_7 - z_4 - z_2 z_5) \, x_2 \bigr) \\
            0 
            & \!\!\!\! x_0^2 + z_5^{} \, x_0^{} x_1^{} + z_7^{} \, x_0^{} x_2^{} 
            + z_6^{} \, x_1^2 + z_8^{} \, x_{1}^{} x_{2}^{} + (z_2^{} z_8^{} 
            + z_4^{} z_7^{} - z_4^2 - z_2^2 z_6^{} 
            - z_2^{} z_4^{} z_5^{} ) \, x_{2}^{2} \\
          \end{bmatrix}}}
      S^2  \, .
    \]}%
  When $z_2 = z_3 = \dotsb = z_9 = 0$, we recover the matrix $\mathbf{B}_{1}$.
  For any choice of parameters, one verifies that these three columns form a
  Gr\"obner basis (where the monomial order has position over term with
  position down and the lexicographic order on terms in $S$) for the image of
  $\mathbf{F}_{1}$.  Hence, every element in this family has Hilbert
  polynomial $2t+2$ and corresponds to a point in $Q^{(2,2,1)}$.  One also
  confirms that the columns of $\mathbf{F}_{1}$ form a Gr\"obner basis (where
  the monomial order has term over position with the lexicographic order on
  terms in $S$ and position down) for the image of $\mathbf{F}_{1}$.  Setting
  $z_2 = z_3 = \dotsb = z_8 = 0$ and letting $z_9 \to \infty$, we obtain
  $\coker \mathbf{B}_2$ as a flat limit of $\coker \mathbf{F}_{1}$.  It
  follows that the third Borel-fixed point lies on $X_1$.
    
  The intersection $X_0 \cap X_1$ of these two irreducible components is the
  closure of the locus of points corresponding to the cokernels, parametrized
  over $\AA^{\!7} = \Spec\bigl(\kk[z_2, z_3, \dotsc, z_8] \bigr)$, of the
  transpose of the homogeneous $S$-linear map
  {\small%
    \[
      S^2(1) \oplus S(2) 
      \xleftarrow{{
          \begin{bmatrix}
            x_0 + z_4 \, x_2 
            & z_3(x_0 + z_4 \, x_2) \\
            x_1 + z_2 \, x_2 
            & z_3(x_1 + z_2 \, x_2) \\
            0 
            & \!\!\!\! x_0^2 + z_5^{} \, x_0^{} x_1^{} + z_7^{} \, x_0^{} x_2^{} 
            + z_6^{} \, x_1^2  + z_8^{} \, x_{1}^{} x_{2}^{} 
            + (z_2^{} z_8^{} + z_4^{} z_7^{} - z_4^2 - z_2^2 z_6^{} 
            - z_2^{} z_4^{} z_5^{} ) \, x_{2}^{2} \\
          \end{bmatrix}}}
      S^2  \, .
    \]}%
  When $z_2 = z_3 = \dotsb = z_8 = 0$, we again recover the matrix
  $\mathbf{B}_{1}$. Since a neighbourhood of the unique Borel-fixed point in
  $X_0 \cap X_1$ is an affine space of dimension $7$, this intersection is
  smooth.  Moreover, the intersection $X_0 \cap X_1 $ is the hypersurface
  defined by the ideal $\langle z_9 \rangle$ in $X_1$, and is the complete
  intersection defined by the ideal
  $\langle z_0^{}, z_{1}^{} - z_2^{} z_8^{} - z_{4}^{} z_{7}^{} + z_{4}^{2} +
  z_2^{2} z_{6}^{} + z_{2}^{} z_{4}^{} z_{5}^{} \rangle$ in $X_0$.
\end{example}

Strangely, the two aberrant singularities in the classifications of smooth
Quot schemes and smooth Hilbert schemes do not align perfectly.  The final
remark underscores the differences between the Quot schemes and Hilbert
schemes indexed by the integer partitions $(2,1)$, $(2,2,1)$, and $(2,2,2,1)$.

\begin{remark} 
  \label{r:gap}
  For the integer partition $(2,1)$, the singular Quot scheme
  $Q^{(2,1)} \coloneqq \smash{\Quot^{t+2}(\mathcal{O}_{\PP^{2}}^{\!r})}$,
  where $n \geqslant 2$ and $r \geqslant 2$, is not like the smooth Hilbert
  scheme $\Hilb^{t+2}(\PP^n)$ where $n \geqslant 2$.  This Hilbert scheme has
  a unique Borel-fixed point, and a general point on $\Hilb^{t+2}(\PP^n)$
  corresponds to the disjoint union of a line and a point; see
  \cite{SS23}*{Theorem~3.2}.  However, the irreducible component $X_1$ in
  Example~\ref{e:21} can be regarded as arising from putting the ``line'' and
  the ``point'' in different summands of the bundle
  $\smash{\mathcal{O}_{\PP^{2}}^{\!2}}$.  For example, the Borel-fixed point
  in $Q^{(2,1)}$ corresponding to $\coker \mathbf{B}_1$ has the ideal for a
  line in one summand and the ideal for a point in the other.
    
  For the integer partition $(2,2,1)$, the singular Quot scheme
  $Q^{(2,2,1)} \coloneqq \smash{\Quot^{2t+2}(\mathcal{O}_{\PP^{2}}^{\!2})}$ is
  more like the singular Hilbert scheme $\Hilb^{2t+2}(\PP^n)$, where
  $n \geqslant 3$, than the smooth Hilbert scheme $\Hilb^{2t+2}(\PP^2)$.  When
  $n \geqslant 3$, the Hilbert scheme $\Hilb^{2t+2}(\PP^n)$ has two
  irreducible components: a general point on one corresponds to a pair of skew
  lines and a general point on the other corresponds to the union of a plane
  conic and an isolated point; see \cite{SS23}*{Example~4.4}.  Analogously,
  the irreducible component $X_1$ in Example~\ref{e:221} can be regarded as
  arising from putting two ``lines'' in different summands of the bundle
  $\smash{\mathcal{O}_{\PP^{2}}^{\!2}}$.  For instance, the Borel-fixed point
  in $Q^{(2,2,1)}$ corresponding to $\coker \mathbf{B}_2$ has the ideal of a
  line in each summand.  This reasoning does not apply to
  $\Hilb^{2t+2}(\PP^2)$ because there is not enough elbow room to have two
  disjoint lines in the projective plane.

  For the integer partition $(2,2,2,1)$, the smooth Quot scheme
  $Q^{(2,2,2,1)} \coloneqq \smash{\Quot^{3t+1}(\mathcal{O}_{\PP^{2}}^{\!r})}$,
  where $r \geqslant 2$, is more like the smooth Hilbert scheme
  $\smash{\Hilb^{3t+1}(\PP^2)}$ than the singular Hilbert scheme
  $\smash{\Hilb^{3t+1}(\PP^n)}$, where $n \geqslant 3$.  When $n \geqslant 3$,
  the Hilbert scheme $\smash{\Hilb^{3t+1}(\PP^n)}$ has two irreducible
  components: a general point in one corresponds to a twisted cubic curve and
  a general point in the other corresponds to the union of a plane cubic and
  an isolated point; compare with \cite{PS85}*{Theorem}.  Unlike the disjoint
  union of a line and a point or two skew lines, a twisted cubic curve cannot
  be split over more than one summand of the bundle
  $\smash{\mathcal{O}_{\PP^{2}}^{\!2}}$.
\end{remark}

\subsection*{Acknowledgements}

We thank Joachim Jelisiejew, Paolo Lella, and Ritvik Ramkumar for their
interest and suggestions. Computational experiments done in
\emph{Macaulay2}~\cite{M2} were indispensable.

Gregory G.~Smith was partially supported by NSERC, the NSF grant DMS-1928930
while in residence at the Simons Laufer Mathematical Sciences Institute, and
the Knut and Alice Wallenberg Foundation while visiting the Royal Institute of
Technology (KTH).  Mike Stillman was partially supported by the NSF grant
DMS-2001367, the Simons Foundation (923996, MES), and the Knut and Alice
Wallenberg Foundation while visiting the Royal Institute of Technology (KTH).


\raggedright

\end{document}